\def\proprigid{Proposition~4.11}
\def\thmbaricglue{Theorem~4.12}
\def\sectbariccoh{Section~6}
\def\lemqext{Lemma~6.3}
\def\thmbaric{Theorem~6.4}
\def\lembaricres{Lemma~6.6}
\def\thmstagcoh{Theorem~8.1}
\def\thmstagdual{Theorem~8.6}
\newcommand{\incgr}[1]{\hbox{$\vcenter{%
  \hbox{\includegraphics[height=24pt]%
%%%%
% The following line should say #1.pdf for PDF-LaTeX,
% and #1.eps for regular LaTeX.
%%%%
  {#1.eps}}}$}}
\newcommand{\sst}{\scriptscriptstyle}
\newcommand{\C}{\mathbb{C}}
\newcommand{\fD}{\mathfrak{D}}
\newcommand{\D}{\mathbb{D}}
\newcommand{\F}{\mathbb{F}}
\newcommand{\N}{\mathbb{N}}
\newcommand{\Q}{\mathbb{Q}}
\newcommand{\Z}{\mathbb{Z}}
\newcommand{\bru}{{}^{\sst \bar r}}
\newcommand{\ru}{{}^{\sst r}\!}
\newcommand{\trl}{{}_{\sst 2r}}
\newcommand{\tql}{{}_{\sst 2q}}
\newcommand{\qprime}{q'}
\newcommand{\dualp}{\tilde p}
\newcommand{\dualpforq}{\tilde q}
\newcommand{\dualq}{\hat q}
\newcommand{\dualqprime}{\hat q'}
\newcommand{\dualr}{\bar r}
\newcommand{\ufl}{\mathchoice{{}^\flat}{{}^\flat}{{}^\flat}{\flat}}
\newcommand{\ush}{\mathchoice{{}^\sharp}{{}^\sharp}{{}^\sharp}{\sharp}}
\newcommand{\flatru}{{}^{\sst \flat r}}
\newcommand{\sharpru}{{}^{\sst \sharp r}}
\newcommand{\q}{{}_{\sst q}}
\newcommand{\qu}{{}^{\sst q}}
\newcommand{\pq}{{}^{\sst p}_{\sst q}}
\newcommand{\bpcq}{{}^{\sst \dualp}_{\sst \dualq}}
\newcommand{\barq}{{}^{\sst \bar q}}
\newcommand{\cheq}{{}_{\sst \dualq}}
\newcommand{\sdualq}{\breve q}
\newcommand{\breq}{{}_{\sst \sdualq}}
\newcommand{\tbql}{{}_{\sst 2\sdualq}}
\newcommand{\skewr}{\llcorner r \lrcorner}
\newcommand{\skewdr}{\llcorner \dualr \lrcorner}
\newcommand{\lsr}{{}_{\sst \skewr}}
\newcommand{\tsr}{{}_{\sst 2\skewr}}
\newcommand{\lsdr}{{}_{\sst \skewdr}}
\newcommand{\flatpq}{{}^{\sst\ufl p}_{\sst \hphantom{\flat}q}}
\newcommand{\sharppq}{{}^{\sst\ush p}_{\sst \hphantom{\sharp}q}}
\newcommand{\tj}{\tilde\jmath}
\newcommand{\orb}[1]{\mathbb{O}(#1)}
\newcommand{\cC}{\mathcal{C}}
\newcommand{\cD}{\mathcal{D}}
\newcommand{\cF}{\mathcal{F}}
\newcommand{\cG}{\mathcal{G}}
\newcommand{\cH}{\mathcal{H}}
\newcommand{\cI}{\mathcal{I}}
\newcommand{\cL}{\mathcal{L}}
\newcommand{\cM}{\mathcal{M}}
\newcommand{\cO}{\mathcal{O}}
\newcommand{\cP}{\mathcal{P}}
\newcommand{\cQ}{\mathcal{Q}}
\newcommand{\cR}{\mathcal{R}}
\newcommand{\cS}{\mathcal{S}}
\newcommand{\bbA}{\mathbb{A}}
\newcommand{\bbG}{\mathbb{G}}
\newcommand{\cIC}{\mathcal{IC}}
\newcommand{\Pp}[2]{\cP(#1)_{\sst [#2]}}
\newcommand{\Pnp}[2]{\cP^\natural(#1)_{\sst [#2]}}
\newcommand{\csupp}[2]{\cC^{\mathrm{supp}}_G(#1,#2)}
\newcommand{\qsupp}[2]{\cQ^{\mathrm{supp}}_G(#1,#2)}
\newcommand{\cg}[1]{\cC_G(#1)}
\newcommand{\cgl}[2]{\cC_G(#1)_{\le #2}}
\newcommand{\cgg}[2]{\cC_G(#1)_{\ge #2}}
\newcommand{\qg}[1]{\cQ_G(#1)}
\newcommand{\Db}[1]{\cD_{\sst G}^{\sst\mathrm{b}}(#1)}
\newcommand{\Dm}[1]{\cD_{\sst G}^{\sst -}(#1)}
\newcommand{\Dp}[1]{\cD_{\sst G}^{\sst +}(#1)}
\newcommand{\Dl}[2]{\Db{#1}^{\sst\le #2}}
\newcommand{\Dlt}[2]{\Db{#1}^{\sst< #2}}
\newcommand{\Dg}[2]{\Db{#1}^{\sst\ge #2}}
\newcommand{\Dgt}[2]{\Db{#1}^{\sst> #2}}
\newcommand{\Dml}[2]{\Dm{#1}^{\sst\le #2}}
\newcommand{\Dmlt}[2]{\Dm{#1}^{\sst< #2}}
\newcommand{\Dpg}[2]{\Dp{#1}^{\sst\ge #2}}
\newcommand{\Dpgt}[2]{\Dp{#1}^{\sst> #2}}
\newcommand{\Dsl}[2]{\Db{#1}_{\sst\le #2}}
\newcommand{\Dslt}[2]{\Db{#1}_{\sst< #2}}
\newcommand{\Dsg}[2]{\Db{#1}_{\sst\ge #2}}
\newcommand{\Dsp}[2]{\Db{#1}_{\sst [#2]}}
\newcommand{\Dmsl}[2]{\Dm{#1}_{\sst\le #2}}
\newcommand{\Dmslt}[2]{\Dm{#1}_{\sst< #2}}
\newcommand{\Dpsg}[2]{\Dp{#1}_{\sst\ge #2}}
\newcommand{\Dpsgt}[2]{\Dp{#1}_{\sst> #2}}
\newcommand{\Dll}[3]{\Dl{#1}{#2}_{\sst\le #3}}
\newcommand{\Dlp}[3]{\Dl{#1}{#2}_{\sst [#3]}}
\newcommand{\Dltp}[3]{\Dlt{#1}{#2}_{\sst [#3]}}
\newcommand{\Dgg}[3]{\Dg{#1}{#2}_{\sst\ge #3}}
\newcommand{\Dgp}[3]{\Dg{#1}{#2}_{\sst [#3]}}
\newcommand{\Dgtp}[3]{\Dgt{#1}{#2}_{\sst [#3]}}
\newcommand{\Dmll}[3]{\Dml{#1}{#2}_{\sst\le #3}}
\newcommand{\Dmltl}[3]{\Dmlt{#1}{#2}_{\sst\le #3}}
\newcommand{\Dpgg}[3]{\Dpg{#1}{#2}_{\sst\ge #3}}
\newcommand{\Dpgtg}[3]{\Dpgt{#1}{#2}_{\sst\ge #3}}
\newcommand{\Dkl}[2]{\Db{#1}_{\sst\sqsubseteq #2}}
\newcommand{\Dkg}[2]{\Db{#1}_{\sst\sqsupseteq #2}}
\newcommand{\Dmkl}[2]{\Dm{#1}_{\sst\sqsubseteq #2}}
\newcommand{\Dpkg}[2]{\Dp{#1}_{\sst\sqsupseteq #2}}
\newcommand{\Dkp}[2]{\Db{#1}_{\sst\langle #2\rangle}}
\newcommand{\ttrunc}{\tau}
\newcommand{\Tl}[1]{\ttrunc^{\sst\le #1}}
\newcommand{\Tlt}[1]{\ttrunc^{\sst< #1}}
\newcommand{\Tg}[1]{\ttrunc^{\sst\ge #1}}
\newcommand{\Tgt}[1]{\ttrunc^{\sst> #1}}
\newcommand{\Tlp}[2]{\Tl{#1}_{\sst [#2]}}
\newcommand{\Tltp}[2]{\Tlt{#1}_{\sst [#2]}}
\newcommand{\Tgp}[2]{\Tg{#1}_{\sst [#2]}}
\newcommand{\Tgtp}[2]{\Tgt{#1}_{\sst [#2]}}
\newcommand{\strunc}{\sigma}
\newcommand{\Sl}[1]{\strunc_{\sst\le #1}}
\newcommand{\Sg}[1]{\strunc_{\sst\ge #1}}
\newcommand{\btrunc}{\beta}
\newcommand{\Bl}[1]{\btrunc_{\sst\le #1}}
\newcommand{\Blt}[1]{\btrunc_{\sst< #1}}
\newcommand{\Bg}[1]{\btrunc_{\sst\ge #1}}
\newcommand{\Bgt}[1]{\btrunc_{\sst> #1}}
\newcommand{\half}{\mathchoice{{\textstyle\frac{1}{2}}}{\frac{1}{2}}{\frac{1}{2}}{\frac{1}{2}}}
\newcommand{\rg}[1]{\cR(#1)}
\newcommand{\rgl}[2]{\cR(#1)_{\sst \le #2}}
\newcommand{\rgg}[2]{\cR(#1)_{\sst \ge #2}}
\newcommand{\fu}{\mathfrak{u}}
\newcommand{\dbmc}[1]{\mathrm{D}^{\sst\mathrm{b}}_{\sst\mathrm{m}}(#1)}
\newcommand{\red}{{\mathrm{red}}}
\newcommand{\hto}{\hookrightarrow}
\newcommand{\ssm}{\smallsetminus}
\DeclareMathOperator{\im}{im}
\DeclareMathOperator{\cok}{cok}
\DeclareMathOperator{\alt}{alt}
\DeclareMathOperator{\cod}{cod}
\DeclareMathOperator{\scod}{scod}
\DeclareMathOperator{\step}{step}
\DeclareMathOperator{\Hom}{Hom}
\DeclareMathOperator{\Ext}{Ext}
\DeclareMathOperator{\cRHom}{\mathit{R}\mathcal{H}\mathit{om}}
\newcommand{\Lotimes}{\mathchoice%
  {\overset{\scriptscriptstyle L}{\otimes}}%
  {\otimes^{\scriptscriptstyle L}}{\otimes^L}{\otimes^L}}
\DeclareMathOperator{\Spec}{Spec}
\newtheorem{thm}{Theorem}[section]
\newtheorem{lem}[thm]{Lemma}
\newtheorem{prop}[thm]{Proposition}
\newtheorem{cor}[thm]{Corollary}
\theoremstyle{definition}
\newtheorem{defn}[thm]{Definition}
\theoremstyle{remark}
\newtheorem{rmk}[thm]{Remark}
\numberwithin{equation}{section}
\title[Purity and decomposition theorems]{Purity and decomposition
theorems for staggered sheaves}
\author{Pramod N.~Achar}
\thanks{The first author was partially supported by NSF Grant DMS-0500873.}
\author{David Treumann}
\date{August 23, 2008}
\begin{document}

\begin{abstract}
Two major results in the theory of $\ell$-adic mixed constructible sheaves are the purity theorem (every simple perverse sheaf is pure) and the decomposition theorem (every pure object in the derived category is a direct sum of shifts of simple perverse sheaves).  In this paper, we prove analogues of these results for coherent sheaves.  Specificially, we work with {\it staggered sheaves}, which form the heart of a certain $t$-structure on the derived category of equivariant coherent sheaves.  We prove, under some reasonable hypotheses, that every simple staggered sheaf is pure, and that every pure complex of coherent sheaves is a direct sum of shifts of simple staggered sheaves.
\end{abstract}

\maketitle

%%%%%%%%%%%%%%%%%%%%%%%%%%%%%%%%%%%%%%%%%%%%%%%%%%%%%%%%%%%%%%%%%%%%%%%%%%%
\section{Introduction}
\label{sect:intro}
%%%%%%%%%%%%%%%%%%%%%%%%%%%%%%%%%%%%%%%%%%%%%%%%%%%%%%%%%%%%%%%%%%%%%%%%%%%

Let $Z$ be a variety over a finite field $\F_q$, and let $\dbmc Z$ denote the bounded derived category of $\ell$-adic mixed constructible sheaves on $Z$.  Recall that the \emph{weights} of an object $F \in \dbmc Z$ are certain integers defined in terms of the eigenvalues of the Frobenius morphism on the stalks at $F$ at $\F_q$-points of $Z$.  An object is said to be \emph{pure} of weight $w \in \Z$ if both it and its Verdier dual have weights $\le w$.  The theory of weights and purity plays a vital role in the proof and in applications of the Weil conjectures~\cite{del1,del2,bbd}.  

Two of the most astonishing consequences of the Weil conjectures occur in the theory of perverse sheaves, developed in~\cite[Chap. 5]{bbd}.  They are (i)~the Purity Theorem~\cite[Th\'eor\`eme~5.3.5]{bbd}, which states that every perverse sheaf has a canonical filtration with pure subquotients (and in particular that every simple perverse sheaf is pure), and (ii)~the Decomposition Theorem~\cite[Th\'eor\`emes 5.3.8 and 5.4.5]{bbd}, which states that every pure object in $\dbmc Z$ is a direct sum of shifts of simple perverse sheaves.  (A more familiar statement of the decomposition theorem---that the pushforward of a pure perverse sheaf along a proper morphism admits such a decomposition---is a consequence of (ii) and Deligne's reformulation of the Weil conjectures~\cite[Th\'eor\`eme I]{del2}).  These two theorems are the source of much of the power of the theory of perverse sheaves for applications in representation theory and other areas.

In this paper, we seek analogues of these results in the setting of derived categories of equivariant coherent sheaves.  Let $X$ be a scheme of finite type over an arbitrary field, and let $G$ be an affine algebraic group acting on $X$ with finitely many orbits.  Let $\Db X$ denote the bounded derived category of $G$-equivariant coherent sheaves on $X$.  The category of \emph{staggered sheaves}, introduced in~\cite{a}, is the heart of a certain nonstandard $t$-structure on $\Db X$.  This category shares some of the key properties of perverse sheaves: for example, every object has finite length, and the simple objects arise via the ``$\cIC$ functor'' from irreducible vector bundles on orbits.

In $\Db X$ there is no single best notion of weight or purity as there is in the $\ell$-adic setting.  Rather, there is a large number of such notions parameterized by \emph{baric perversities}, which are certain integer-valued functions on the set of $G$-orbits in $X$.  More precisely, in~\cite{at:bs} we associated to each baric perversity a \emph{baric structure} (a certain kind of filtration of a triangulated category) on $\Db X$, which we use here to simulate the formalism of weights.  We call an object $\cF \in \Db X$ \emph{pure} of baric degree $w$ if both it and its Serre--Grothendieck dual lie in the ``$\le w$'' part of the baric structure.  (A result of S.~Morel~\cite{mor} essentially states that
Frobenius weights give rise to a baric structure on $\dbmc Z$, so the
theory of $\ell$-adic mixed perverse sheaves could be redeveloped using the
language of baric structures as well.)

The main results of the present paper (which are Theorems \ref{thm:purity}, \ref{thm:purity2}, and \ref{thm:decomposition}) come in two incarnations, a ``baric''
one and a ``skew'' one.  In the baric version, they state that under
some reasonable hypotheses, every staggered sheaf has a canonical
filtration with pure subquotients, and every pure object of $\Db X$ is a
direct sum of shifts of simple staggered sheaves.  The skew versions consist of essentially the same statements, but with
purity replaced by a new concept called \emph{skew-purity}.

An outline of the paper is as follows.  We begin in
Section~\ref{sect:prelim} by fixing notation and recalling relevant results
about baric structures and staggered sheaves.  In Sections~\ref{sect:pure}
and~\ref{sect:purep}, we construct two $t$-structures on the full
triangulated subcategory of pure objects of baric degree $w$ in $\Db X$,
called the \emph{purified standard $t$-structure} and the
\emph{pure-perverse $t$-structure}.  (The latter is defined in terms of the
former.)  We also prove that the heart of the pure-perverse $t$-structure
is contained in that of the staggered $t$-structure.  In
Section~\ref{sect:ic}, we study simple objects in the pure-perverse
$t$-structure.  They, like simple staggered sheaves, are characterized by a
certain uniqueness property, and this allows us to prove that every simple
staggered sheaf lies in the heart of a suitable pure-perverse
$t$-structure.  This is a major step towards the baric version of the
Purity Theorem, whose proof is completed in Section~\ref{sect:purity}.

Next, in Section~\ref{sect:orbit}, which is essentially independent of the
rest of the paper, we give a combinatorial classification of staggered
$t$-structures on a variety consisting of a single $G$-orbit.  This allows
us to give an elementary criterion for a certain $\Ext^1$-vanishing
condition that appears as a hypothesis throughout the rest of the paper. 
Section~\ref{sect:ext-van} contains some results on vanishing of higher
$\Ext$-groups; these lay the the groundwork for the definition of
skew-purity in Section~\ref{sect:skew}.  The skew version of the Purity
Theorem is proved in Section~\ref{sect:purity2}, and both versions of the
Decomposition Theorem are proved together in Section~\ref{sect:decomp}.  Finally, Section~\ref{sect:example} gives a brief example.

%%%%%%%%%%%%%%%%%%%%%%%%%%%%%%%%%%%%%%%%%%%%%%%%%%%%%%%%%%%%%%%%%%%%%%%%%%%
\section{Preliminaries and Notation}
\label{sect:prelim}
%%%%%%%%%%%%%%%%%%%%%%%%%%%%%%%%%%%%%%%%%%%%%%%%%%%%%%%%%%%%%%%%%%%%%%%%%%%

Let $X$ be a scheme of finite type over a field $\Bbbk$.  Let $G$ be an affine algebraic group over $\Bbbk$, acting on $X$.  Assume that $G$ acts on $X$ with finitely many orbits.  Here, and throughout the paper, an \emph{orbit} is a reduced, locally closed $G$-invariant subscheme containing no proper nonempty closed $G$-invariant subschemes.  $X$ itself need not be reduced.  Let $\orb X$ denote the set of $G$-orbits in $X$.

For each orbit $C \in \orb X$, let $i_C: \overline C \hto X$ denote the inclusion morphism of the closure of $C$ as a reduced closed subscheme, and let $\cI_C \subset \cO_X$ denote the corresponding ideal sheaf.

\begin{rmk}\label{rmk:genl}
Some earlier references on staggered sheaves, including most of~\cite{a} and a significant part of~\cite{at:bs}, imposed much weaker hypotheses: the setting was a scheme of finite type over some noetherian base scheme admitting a dualizing complex, acted on by an affine group scheme over the same base, with no assumption on the number of orbits.  In the present paper, only the results of Sections~\ref{sect:pure} and~\ref{sect:purep} hold in such great generality. The main results do not, and it simplifies the discussion to impose these conditions at the outset.
\end{rmk}

We uniformly adopt the convention that terms like ``open subscheme,'' ``closed subscheme,'' and ``irreducible'' are always to be interpreted in a $G$-invariant sense.  That is, ``open subscheme'' should always be understood to mean ``$G$-invariant open subscheme,'' and a subscheme is ``irreducible'' if it is not a union of two proper closed ($G$-invariant) subschemes.

Let $\cg X$ denote the category of $G$-equivariant coherent sheaves on $X$,
and let $\Db X$ (resp.~$\Dm X$, $\Dp X$) denote the full subcategory of the
bounded (resp.~bounded above, bounded below) derived category of
$G$-equivariant quasicoherent sheaves on $X$ consisting of objects with
coherent cohomology.  It is well-known that $\Db X$ and $\Dm X$ are
equivalent to bounded and bounded-above derived categories of $\cg X$,
respectively.  As usual, we let $\Dl Xn$ and $\Dml Xn$ denote the
subcategories of $\Db X$ and $\Dm X$, respectively, consisting of objects
$\cF$ with $h^k(\cF) = 0$ for $k > n$.  $\Dg Xn$ and $\Dpg Xn$ are defined
similarly.  We also have the truncation functors
\[
\Tl n: \Dp X \to \Dl Xn
\qquad\text{and}\qquad
\Tg n: \Dm X \to \Dg Xn.
\]

Over the course of this paper, we will consider a rather large number of different kinds of subcategories of $\Db X$, all of which are denoted by decorating the symbol ``$\Db X$'' with various left and right super- and subscripts.  To avoid confusion, it is helpful to visualize these subcategories as various regions in a large $3$-dimensional grid in which the vertical axis represents cohomological degree in $\Db X$.  (See Section~\ref{sect:baric} and Section~\ref{sect:purep} for the meanings of the other axes.)  Thus, the standard $t$-structure and its heart may be pictured as follows:
\[
\Dl X0:\incgr{dl}
\qquad\qquad
\Dg X0:\incgr{dg}
\qquad\qquad
\cg X :\incgr{cg}
\]

%--------------------------------------------------------------------------
\subsection{Duality and codimension}
\label{sect:duality}
%--------------------------------------------------------------------------

By~\cite[Proposition~1]{bez:pc}, $X$ possesses an equivariant Serre--Grothendieck dualizing complex.  Choose one, once and for all, and denote it by $\omega_X$.  We denote the Serre-Grothendieck duality functor by $\D = \cRHom(\cdot, \omega_X)$.

For each orbit $C \in \orb X$, there is a unique integer
\[
\cod C
\qquad\text{such that}\qquad
Ri_C^!\omega_X|_C \in \Dl C{\cod C} \cap \Dg C{\cod C}.
\]
This integer differs from the ordinary Krull codimension of $C$ by some constant depending only on $\omega_X$.  (See~\cite[Section~V.3]{har} and~\cite[Section~6]{a}.)  Thus, $\cod Y$ can be made to agree with the ordinary codimension by replacing $\omega_X$ by a suitable shift, but we do not assume here that any such specific normalization has been made.

%--------------------------------------------------------------------------
\subsection{$s$-structures and altitude}
\label{sect:sstruc}
%--------------------------------------------------------------------------

Suppose $\cg X$ is equipped with an increasing filtration $\{\cgl Xw\}_{w \in \Z}$ by Serre subcategories.  Let
\[
\cgg Xw = \{ \cG \in \cg X \mid
\text{$\Hom(\cF,\cG) = 0$ for all $\cF \in \cgl X{w-1}$} \}.
\]
For any sheaf $\cF \in \cg X$ and any integer $w \in \Z$, there is a unique maximal subsheaf of $\cF$ in $\cgl Xw$, denoted $\Sl w\cF$.  Conversely, the sheaf $\Sg {w+1}\cF = \cF/\Sl w\cF$ is the unique largest quotient of $\cF$ lying in $\cgg X{w+1}$.

The categories $(\{\cgl Xw\}, \{\cgg Xw\})_{w \in \Z}$ constitute an \emph{$s$-structure} on $X$ if they satisfy a rather lengthy list of axioms given in~\cite{a}, mostly having to do with $\Ext$-vanishing conditions on closed subschemes.  We will not review the full definition in the general case here, but we will give an explicit description of a certain class of $s$-structures below.

If $X$ is endowed with an $s$-structure, a sheaf $\cF \in \cg X$ is said to be \emph{$s$-pure of step $w$} if it lies in $\cgl Xw \cap \cgg Xw$.  (In~\cite{a}, this property was simply called ``pure,'' but here we call it ``$s$-pure'' to avoid confusion with the notions of baric and skew purity, {\it cf.} Section~\ref{sect:baric}.)  An $s$-structure on $X$ induces $s$-structures on all locally closed subschemes of $X$, and in particular on all orbits.

Given an orbit $C \in \orb X$, recall that $Ri_C^!\omega_X[\cod C]|_C$ lies in $\cg C$ (that is, it is concentrated in cohomological degree $0$).  According to~\cite[Section~6]{a}, there is a unique integer
\[
\alt C
\qquad\text{such that}\qquad
Ri_C^!\omega_X[\cod C]|_C \in \cgl C{\alt C} \cap \cgg C{\alt C}.
\]
This integer is called the \emph{altitude} of $C$.  Finally, the \emph{staggered codimension} of $C$, denoted $\scod C$, is defined by
\[
\scod C = \alt C + \cod C.
\]

Let us now return to the question of how to construct an $s$-structure.  Consider the special case where $X$ is a reduced scheme consisting of a single $G$-orbit.  In this case, the conditions for the collection $(\{\cgl Xw\}, \{\cgg Xw\})_{w \in \Z}$ to constitute an $s$-structure reduce to the following much simpler conditions:
\begin{enumerate}
\item For every sheaf $\cF \in \cg X$, there exist integers $v, w$ such that $\cF \in \cgg Xv \cap \cgl Xw$.
\item If $\cF \in \cgl Xw$ and $\cG \in \cgl Xv$, then $\cF \otimes \cG \in \cgl X{w+v}$.
\item If $\cF \in \cgg Xw$ and $\cG \in \cgg Xv$, then $\cF \otimes \cG \in \cgg X{w+v}$.
\end{enumerate}
In Section~\ref{sect:orbit}, we will give a constructive classification of all $s$-structures on a single orbit.

Now, suppose $X$ contains more than one orbit, and assume that each orbit is endowed with an $s$-structure.  Assume also that the following condition holds:
\begin{equation}\label{eqn:rec}
\text{For each orbit $C \subset X$, the sheaf $i_C^*\cI_C|_C$ is in $\cgl C{-1}$.}
\end{equation}
(The sheaf in question is simply the conormal bundle of $C$.)  By~\cite[Theorem~1.1]{as:flag}, the condition~\eqref{eqn:rec} implies that there is a unique $s$-structure on $X$ whose restriction to each orbit coincides with the given $s$-structure on that orbit.  In practice, the easiest way to produce explicit examples of $s$-structures seems to be to specify one on each orbit and then invoke~\cite[Theorem~1.1]{as:flag}.

Not every $s$-structure on $X$ arises in this way, but every $s$-structure for which condition~\eqref{eqn:rec} holds does.  Following~\cite{at:bs}, $s$-structures with this property are said to be \emph{recessed}.  

We assume for the remainder of the paper that $X$ is endowed with a fixed recessed $s$-structure.  For examples, see~\cite{as:flag, t}.

%--------------------------------------------------------------------------
\subsection{Perversities}
\label{sect:perv}
%--------------------------------------------------------------------------

A \emph{perversity} (or \emph{perversity function}) is simply a function $q: \orb X \to \Z$.  A perversity $q$ is said to be \emph{monotone} if whenever $C' \subset \overline C$, we have $q(C') \ge q(C)$.

A number of constructions in the sequel depend on the choice of a perversity.  We will often refer to specific kinds of perversities, such as ``baric perversities,'' ``Deligne--Bezrukavnikov perversities,'' and ``staggered perversities.'' These are not intrinsically different kinds of objects; rather, the adjectives serve merely to indicate how a particular perversity will be used ({\it e.g.}, to construct a baric structure).

Given a perversity $q: \orb X \to \Z$, we define three different kinds of ``dual perversity,'' as follows:
\begin{align*}
&\text{\emph{baric} dual:} & \dualq(C) &= 2\alt C - q(C) \\
&\text{\emph{Deligne--Bezrukavnikov} dual:} & \dualpforq(C) &= \cod C -
q(C) \\
&\text{\emph{staggered} dual:} & \bar q(C) &= \scod C - q(C)
\end{align*}
A perversity is called \emph{comonotone} if its dual is monotone.  This condition is, of course, ambiguous, but the intended type of duality will be clear from context whenever this term is used.

The \emph{middle perversity} of a given kind (baric,
Deligne--Bezrukavnikov, or staggered) is the unique perversity that is
equal to its own dual.  Clearly, the middle baric perversity is given by
\[
q(C) = \alt C.
\]
Similarly, the middle Deligne--Bezrukavnikov and staggered perversities,
when they exist, are given by the formulas
\[
q(C) = \half \cod C
\qquad\text{and}\qquad
q(C) = \half\scod C,
\]
respectively.  However, these formulas make sense only when all $\cod C$
or all $\scod C$, respectively, are even.

%--------------------------------------------------------------------------
\subsection{Baric structures}
\label{sect:baric}
%--------------------------------------------------------------------------

Following~\cite{at:bs}, a \emph{baric structure} on a triangulated category $\fD$ is a pair of
collections of thick subcategories $(\{\fD_{\le w}\}, \{\fD_{\ge w}\})_{w \in \Z}$ satisfying the following axioms:
\begin{enumerate}
\item $\fD_{\le w} \subset \fD_{\le w+1}$ and $\fD_{\ge w} \supset \fD_{\ge w+1}
$ for all $w$.
\item $\Hom(A,B) = 0$ whenever $A \in \fD_{\le w}$ and $B \in \fD_{\ge w+1}$.
\item For any object $X \in D$, there is a distinguished triangle $A \to X
\to B \to$ with $A \in \fD_{\le w}$ and $B \in \fD_{\ge w+1}$.
\item For any object $X \in D$, there exist integers $v, w$ such that $X \in \fD_{\ge v} \cap \fD_{\le w}$.
\end{enumerate}
(The last axiom was not part of the definition of ``baric structure'' in~\cite{at:bs}; rather, a baric structure satisfying this extra condition was called \emph{bounded}.  In this paper, however, all baric structures will be bounded.)  Given a baric structure on $\fD$, the inclusion functor $\fD_{\le w} \hto \fD$ admits a right adjoint, denoted $\Bl w$, and the inclusion $\fD_{\ge w} \hto \fD$ admits a left adjoint $\Bg w$.  The functors $\Bl w$ and $\Bg w$ are called \emph{baric truncation functors}.  For any object $X$ and any $w \in \Z$, there is a distinguished triangle
\[
\Bl w X \to X \to \Bg {w+1}X \to,
\]
and any distinguished triangle as in Axiom~(3) above is canonically isomorphic to this one.

The main result of~\cite{at:bs} was the construction of a family of baric structures on $\Db X$, which we now recall.  Let $q: \orb X \to \Z$ be a perversity.  We define a full subcategory of $\cg X$ as follows:
\begin{equation}\label{eqn:qcgl}
\q\cgl Xw = \{ \cF \in \cg X \mid
\text{$i_C^*\cF|_C \in \cgl 
C{\lfloor \frac{w+q(C)}{2}\rfloor}$ for all $C \in \orb X$} \}.
\end{equation}
Note that this does not agree with the definition in~\cite{at:bs}: in {\it loc.~cit.}, pullbacks to orbits were required to lie in $\cgl C{w+q(C)}$, not $\cgl C{\lfloor(w+q(C))/2\rfloor}$.  Thus, the relationship between the two definitions is as follows:
\[
\text{$\q\cgl Xw$ as in~\cite{at:bs}} =
\text{$\tql\cgl X{2w}$ as in the present paper.}
\]
(The reason for this change will be explained below.)  Next, let
\begin{equation}\label{eqn:baric-defn}
\begin{aligned}
\q\Dmsl Xw &= \{ \cF \in \Dm X \mid 
\text{$h^k(\cF) \in \q\cgl Xw$ for all $k$} \}, \\
\q\Dpsg Xw &= \{ \cF \in \Dp X \mid
\text{$\Hom(\cG,\cF) = 0$ for all $\cG \in \q\Dmsl X{w-1}$} \}.
\end{aligned}
\end{equation}
Let $\q\Dsl Xw$ and $\q\Dsg Xw$ denote the bounded versions of these
categories, {\it i.e.}, the intersections of the categories above with $\Db
X$.  According to~\cite[\thmbaric]{at:bs}, $(\{\q\Dsl Xw\}, \{\q\Dsg Xw\})_{w\in \Z}$
is a baric structure on $\Db X$.  We write $\q\Bl w$ and $\q\Bg w$ for its
baric truncation functors, and we let
\[
\q\Dsp Xw = \q\Dsl Xw \cap \q\Dsg Xw.
\]
$\q\Dsp Xw$ is a full triangulated subcategory of $\Db X$.  Its objects are said to be \emph{pure of baric degree $w$} (with respect to the baric perversity $q$).  Note that for a sheaf in $\cg X$, there is no concise relationship between purity and $s$-purity: neither condition implies the other.

In the $3$-dimensional grid picture of $\Db X$, the horizontal axis represents baric degree.  Thus, the various categories associated to a baric structure may be drawn as follows:
\[
\q\Dsl Xw:\incgr{dsl}
\qquad\qquad
\q\Dsg Xw:\incgr{dsg}
\qquad\qquad
\q\Dsp Xw:\incgr{dsp}
\]
Observe that the category $\q\cgl Xw$ is simply $\cg X \cap \q\Dsl Xw$. 
We draw it thus:
\[
\q\cgl Xw:\incgr{qcgl}.
\]
However, it would be misleading to draw a similar picture of $\cg X \cap
\q\Dsg Xw$, because $\cg X$ is not, in general, generated by the
subcategories $\q\cgl Xw$ and $\cg X \cap \q\Dsg Xw$.  The latter category
does not seem to have very good properties, and it will not make an
appearance in the sequel.  (See~\cite{at:bs} for more information about
this category.)

The following useful result states that these baric
structures are both \emph{hereditary} (well-behaved on closed subschemes)
and \emph{local} (well-behaved on open subschemes).

\begin{lem}[{\cite[\lembaricres]{at:bs}}]\label{lem:baric-res}
Let $j: U \hto X$ be the inclusion of an open subscheme, and $i: Z \hto X$
the inclusion of a closed subscheme.  Then:
\begin{enumerate}
\item $j^*$ takes $\q\Dmsl Xw$ to $\q\Dmsl Uw$ and $\q\Dpsg Xw$ to
$\q\Dpsg Uw$.
\item $Li^*$ takes $\q\Dmsl Xw$ to $\q\Dmsl Zw$.
\item $Ri^!$ takes $\q\Dpsg Xw$ to $\q\Dpsg Zw$.
\item $i_*$ takes $\q\Dmsl Zw$ to $\q\Dmsl Xw$ and $\q\Dpsg Zw$ to
$\q\Dpsg Xw$.\qed
\end{enumerate}
\end{lem}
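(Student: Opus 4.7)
The plan is to establish the four statements using the explicit definitions \eqref{eqn:qcgl}--\eqref{eqn:baric-defn}, in the order (1)--$\q\Dmsl$, (4)--$\q\Dmsl$, (2), and then the remaining $\q\Dpsg$ assertions via adjunction. The recessedness of the $s$-structure plays a central role.

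The $\q\Dmsl$ halves of (1) and (4) are quick. Both $j^*$ and $i_*$ are exact, so they commute with cohomology sheaves, reducing each claim to the coherent level. For $j^*$, the orbits of $U$ are precisely the $G$-orbits of $X$ contained in $U$, and for such an orbit $C$ the identity $(i_C^{U})^* j^* \cF|_C = (i_C^X)^* \cF|_C$ transfers membership in $\q\cgl Xw$ to $\q\cgl Uw$. For $i_*$, the $G$-invariance of $Z$ forces any orbit $C$ of $X$ to either lie inside $Z$ (in which case $C$ is an orbit of $Z$ and $(i_C^X)^* i_*\cF|_C = (i_C^Z)^*\cF|_C$ via $i^*i_* = \mathrm{id}$) or to be disjoint from $Z$ (in which case $i_*\cF$ vanishes on $C$).

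The technical heart is step (2). Via the hypercohomology spectral sequence $E_2^{p,q} = h^p(Li^* h^q(\cF)) \Rightarrow h^{p+q}(Li^* \cF)$, it suffices to treat the case where $\cF = \cG$ is a single coherent sheaf in $\q\cgl Xw$; then one must show that each Tor sheaf $h^p(Li^*\cG) = \mathrm{Tor}^{\cO_X}_{-p}(\cO_Z, \cG)$ belongs to $\q\cgl Zw$. For each orbit $C \subset Z$, the plan is to compare the pullback along $i_C^Z$ of these Tor sheaves with the full derived pullback $Li_C^{X,*}\cG$, using the factorization $i_C^X = i \circ i_C^Z$. Since $\cG \in \q\cgl Xw$ controls the pullback of $\cG$ to $C$ by the appropriate truncation of $\cg C$, the recessedness condition~\eqref{eqn:rec} on the conormal bundles $i_C^*\cI_C|_C$ is what allows the $s$-structure bound $\lfloor(w+q(C))/2\rfloor$ to propagate through the successive closed inclusions $\bar C \hto Z \hto X$ and through the associated Tor spectral sequences.

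The $\q\Dpsg$ assertions now follow formally. For (4), the adjunction $Li^* \dashv i_*$ and step (2) give, for $\cG \in \q\Dmsl X{w-1}$ and $\cF \in \q\Dpsg Zw$,
\[
\Hom_X(\cG, i_*\cF) = \Hom_Z(Li^*\cG, \cF) = 0.
\]
For (3), the adjunction $i_* \dashv Ri^!$ and the $\q\Dmsl$ half of (4) yield the analogous $\Hom_Z(\cG', Ri^!\cF) = \Hom_X(i_*\cG', \cF) = 0$. For the $\q\Dpsg$ half of (1), use the equivalent characterization $\cF \in \q\Dpsg Xw \Leftrightarrow \q\Bl{w-1}\cF = 0$ together with the fact that the baric truncation $\q\Bl{w-1}$ commutes with the open restriction $j^*$; this commutation is a standard consequence of the local-and-hereditary construction of the baric structure in the recessed setting.

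\emph{Main obstacle.} The crux is step (2). The subcategory $\q\cgl$ is defined via underived orbit pullbacks, whereas $Li^*$ naturally produces higher Tor sheaves; bounding these contributions by the $s$-structure uniformly across the orbit stratification is the nontrivial content, and is exactly where the recessed hypothesis~\eqref{eqn:rec} is used.
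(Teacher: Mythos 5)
The paper offers no proof of this lemma: it is imported verbatim from \cite[\lembaricres]{at:bs}, where it falls out of the gluing construction of the baric structure (the point being that $\q\Dmsl Xw$ admits a second characterization, in terms of the derived pullbacks $Li_C^*$ to all orbits, which makes part~(2) formal). Judged as a self-contained argument, your proposal has a genuine gap exactly at the step you yourself identify as the crux. For part~(2) you reduce, via the hypercohomology spectral sequence, to showing that $h^{-r}(Li^*\cG)=\mathrm{Tor}^{\cO_X}_r(\cO_Z,\cG)$ lies in $\q\cgl Zw$ for a single sheaf $\cG\in\q\cgl Xw$, i.e.\ that $i_C^*\mathrm{Tor}_r(\cO_Z,\cG)|_C\in\cgl C{\lfloor(w+q(C))/2\rfloor}$ for every orbit $C\subset Z$ and every $r\ge 0$. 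Your justification --- that recessedness ``allows the bound to propagate through the successive closed inclusions and through the associated Tor spectral sequences'' --- restates the claim rather than proving it: the hypothesis controls only the underived pullback $i_C^*\cG|_C$, and nothing you write converts that into control of the underived pullbacks of the higher Tor sheaves. That this conversion is genuinely delicate is visible in the present paper: Lemma~\ref{lem:pullback-closed} proves exactly such a statement (for a closed orbit, with the degrading bound $\cgl C{w-r}$), and its proof requires the Ext-vanishing result Proposition~\ref{prop:ext-closed}, which rests on the additional ``split'' hypothesis introduced only in Section~\ref{sect:ext-van} --- whereas Lemma~\ref{lem:baric-res} must hold for an arbitrary recessed $s$-structure and is used throughout Sections 3--6. (A smaller unaddressed point in the same reduction: passing from $E_2$ to $E_\infty$ requires $\q\cgl Zw$ to be closed under subobjects, which is not evident from the definition since $i_C^*$ is only right exact.) So part~(2) is unproved, and with it the $\q\Dpsg$ half of~(4), which you correctly derive from it by adjunction.

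A second problem: your argument for the $\q\Dpsg$ half of~(1) is circular. The commutation of $\q\Bl{w-1}$ with $j^*$ is not a ``standard consequence'' one may assume; to prove it one applies $j^*$ to the triangle $\q\Bl{w-1}\cF\to\cF\to\q\Bg{w}\cF\to$ and must already know that $j^*$ preserves both $\q\Dsl X{w-1}$ and $\q\Dsg Xw$ --- that is, part~(1) itself. Since $j^*$ has no adjoint preserving coherence, the intended route is instead through the extension lemma \cite[\lemqext]{at:bs}: any $\cG\in\q\Dmsl U{w-1}$ extends to an object of $\q\Dmsl X{w-1}$, and $\Hom(\cG,\cF|_U)$ is computed as a direct limit of Hom-groups on $X$, each of which vanishes. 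The remaining pieces of your proposal --- the $\q\Dmsl$ halves of (1) and (4), part~(3), and the adjunction formalism --- are correct.
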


By applying the duality functor $\D$ to the categories that constitute
some given baric structure on $\Db X$, one can obtain a new baric
structure, said to be \emph{dual} to the given
one.  It follows from the construction in~\cite[\sectbariccoh]{at:bs} that
the dual baric structure to
$(\{\q\Dsl Xw\}, \{\q\Dsg Xw\})_{w\in \Z}$ is the baric structure associated by the
above formulas to the dual baric perversity:
\[
\D(\q\Dsl Xw) = \cheq\Dsg X{-w}
\qquad\text{and}\qquad
\D(\q\Dsg Xw) = \cheq\Dsl X{-w}.
\]
In particular, if $q$ is the middle baric perversity $q(C) = \alt C$,
then the baric structure $(\{\q\Dsl Xw\}, \{\q\Dsg Xw\})_{w\in \Z}$ is self-dual.  We
adopt the convention that when the left-subscript perversity is omitted,
this self-dual baric structure is meant:
\begin{align*}
\Dsl Xw &= \text{$\q\Dsl Xw$ with respect to $q(C) = \alt C$,} \\
\Dsg Xw &= \text{$\q\Dsg Xw$ with respect to $q(C) = \alt C$.}
\end{align*}
From Section~\ref{sect:purity} on, we will work almost exclusively with this self-dual baric structure.

\begin{rmk}
The existence of a self-dual baric structure is why the definition of $\q\cgl Xw$ was changed from that in~\cite{at:bs}: in the notation of {\it loc.~cit.}, the definitions~\eqref{eqn:baric-defn} can give rise to a self-dual baric structure only if $\alt C$ is even for all $C \in \orb X$.  Here, we do not wish to impose that restriction on the $s$-structure, and we circumvent it by modifying the definition of $\q\cgl Xw$.
\end{rmk}

%--------------------------------------------------------------------------
\subsection{Staggered $t$-structures}
\label{sect:stagt}
%--------------------------------------------------------------------------

Let $q: \orb X \to \Z$ be a perversity.  We define full subcategories of $\Dm X$ and $\Dp X$ as follows:
\begin{align*}
\qu\Dml Xn &=
\{ \cF \in \Dm X \mid
\text{$h^k(\cF) \in \tql\cgl X{n-2k}$ for all $k$} \}, \\
\qu\Dpg Xn &=
\{ \cF \in \Dp X \mid
\text{$\Hom(\cG,\cF) = 0$ for all $\cG \in \qu\Dmlt Xn$} \}.
\end{align*}
We also write $\qu\Dl Xn$ and $\qu\Dg Xn$ for the corresponding bounded categories:
\[
\qu\Dl Xn = \qu\Dml Xn \cap \Db X,
\qquad
\qu\Dg Xn = \qu\Dpg Xn \cap \Db X.
\]
We may draw pictures of these categories as follows:
\[
\qu\Dl X0:\incgr{qdl}
\qquad\qquad
\qu\Dg X0:\incgr{qdg}
\]
Although these pictures are useful, care should be exercised in
interpreting them.  In particular, for a fixed perversity $q$, it does not
make sense to superimpose the picture for, say, $\q\Dsl Xw$ with that for
$\qu\Dl X0$, because in the latter, the horizontal axis represents baric
degree with respect to the baric perversity $2q$, not $q$.  Also, the
picture above for $\qu\Dg X0$ may be interpreted as saying that for each
$k$, we have $\Dg Xk \cap \tql\Dsg X{-2k} \subset \qu\Dg X0$.  It does
\emph{not} say that $h^k(\cF) \in \tql\Dsg X{-2k}$ for $\cF \in \qu\Dg X0$;
indeed, the latter condition is false in general.

According to~\cite[\thmstagcoh]{at:bs}, $(\qu\Dl X0, \qu\Dg X0)$ is a
bounded, nondegenerate $t$-structure on $\Db X$.  Moreover, its heart
\[
\qu\cM(X) = \qu\Dl X0 \cap \qu \Dg X0,
\]
known as the category of \emph{staggered sheaves} (of perversity $q$), is a finite-length category.  Its truncation functors are denoted
\[
\qu\Tl n : \Db X \to \qu\Dl Xn
\qquad\text{and}\qquad
\qu\Tg n : \Db X \to \qu\Dg Xn,
\]
and the associated cohomology functors are denoted $\qu h^n: \Db X \to \qu\cM(X)$. 

The simple objects in this category are parametrized by pairs $(C,\cL)$, where $C \in \orb X$, and $\cL$ is an irreducible vector bundle on $C$.  To describe the structure of the corresponding simple object, we require the notion of the \emph{intermediate-extension functor}.  This is a fully faithful functor
\[
\qu j^C_{!*}: \qu\cM(C) \to \qu\cM(\overline C)
\]
that takes an object $\cF \in \qu\cM(C)$ to the unique object of $\qu\cM(\overline C)$ with the following properties:
\begin{enumerate}
\item $\qu j^C_{!*}\cF|_C \cong \cF$;
\item For any smaller orbit $C' \subset \overline C \ssm C$, we have
$Li_{C'}^* \qu j^C_{!*}\cF \in \qu\Dml {\overline{C'}}{-1}$ and $Ri_{C'}^!
\qu j^C_{!*}\cF \in \qu \Dpg {\overline{C'}}{1}$.
\end{enumerate}
Now, an irreducible vector bundle $\cL \in \cg C$ is necessarily $s$-pure;
suppose it is $s$-pure of step $v$.  Then $\cL[v - q(C)]$ is an object of
$\qu\cM(C)$, and the object
\[
\qu\cIC(\overline C, \cL[v - q(C)]) = i_{C*} j^C_{!*}\cL[v - q(C)],
\]
known as a (\emph{staggered}) \emph{intersection cohomology complex}, is a
simple object of $\qu\cM(X)$.  Every simple object of $\qu\cM(X)$ arises in
this way.

As with baric structures, there is an easy description of the dual
$t$-structure to a given staggered $t$-structure: according
to~\cite[\thmstagdual]{at:bs}, it is the staggered $t$-structure associated
to
the dual staggered perversity.  That is,
\[
\D(\qu\Dl Xn) = \barq\Dg X{-n}
\qquad\text{and}\qquad
\D(\qu\Dg Xn) = \barq\Dl X{-n}.
\]
In particular, if $\scod C$ is even for all $C \in \orb X$, then the
staggered $t$-structure associated to the middle staggered perversity
$q(C) = \half\scod C$ is self-dual.

%--------------------------------------------------------------------------
\subsection{Sheaves on nonreduced schemes}
\label{sect:nonred}
%--------------------------------------------------------------------------

We conclude with a useful lemma comparing various categories of sheaves on
a nonreduced scheme with those on its associated reduced scheme.

\begin{lem}\label{lem:red}
Let $X_\red$ denote the reduced scheme associated to $X$, and let $t:
X_\red \hto X$ be the natural map.  Let $q: \orb X \to \Z$ be a perversity.
\begin{enumerate}
\item If $\cF \in \cg X$, we have $\cF \in \cgl Xw$ if and only if $t^*\cF
\in \cgl {X_\red}w$.
\item If $\cF \in \cg X$, we have $\cF \in \q\cgl Xw$ if and only if
$t^*\cF \in \q\cgl {X_\red}w$.
\item If $\cF \in \Dm X$, we have $\cF \in \Dml Xn$ if and only if $Lt^*\cF
\in \Dml {X_\red}n$.
\item If $\cF \in \Dm X$, we have $\cF \in \q\Dmsl Xw$ if and only if
$Lt^*\cF \in \q\Dmsl {X_\red}w$.
\end{enumerate}
\end{lem}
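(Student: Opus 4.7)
The plan is to treat the four parts in order, with (1) as the foundational statement, (2) as an immediate consequence, and (3) and (4) handled by reducing to the top cohomology sheaf.

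For part (1), the key point is that the $s$-structures on $X$ and $X_\red$ are both recessed and, by Section~\ref{sect:sstruc}, determined by the common set of orbits $\orb X = \orb{X_\red}$. Concretely, I would pass through the nilradical filtration $\cF \supset \cN\cF \supset \cN^2\cF \supset \cdots$, whose successive quotients are pushforwards from $X_\red$. Since $\cgl{\cdot}w$ is a Serre subcategory, membership is detected on these subquotients; the axiomatic compatibility of $s$-structures with closed immersions (applied to $t$) handles the interplay between $t_*$ and the two $s$-structures, after which $t^*t_* \cong \mathrm{id}$ on $\cg{X_\red}$ closes the loop. Part (2) is then formal: every orbit inclusion $i_C$ factors as $\overline C \hto X_\red \hto X$, so the defining condition of $\q\cgl{\cdot}w$ in \eqref{eqn:qcgl} is identical whether computed on $\cF$ or on $t^*\cF$.

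For part (3), the forward direction uses right exactness of $t^*$, giving cohomological amplitude $\le 0$ for $Lt^*$. For the reverse, if $N > n$ is maximal with $h^N(\cF) \ne 0$, apply $Lt^*$ to the truncation triangle $\Tl{N-1}\cF \to \cF \to h^N(\cF)[-N] \to$ and take $h^N$: this yields $h^N(Lt^*\cF) \cong t^* h^N(\cF)$, which is nonzero by Nakayama's lemma, contradicting the hypothesis. Part (4) is similar in spirit: the forward direction is Lemma~\ref{lem:baric-res}(2) applied to the closed immersion $t$, and the reverse proceeds by induction on the top degree $N$ (whose boundedness comes from (3)). The identification $h^N(Lt^*\cF) \cong t^* h^N(\cF)$ together with (2) places $h^N(\cF)$ in $\q\cgl Xw$, and the inductive hypothesis applied to $\Tl{N-1}\cF$ finishes the argument, once one notes that $Lt^*\Tl{N-1}\cF$ remains in $\q\Dmsl{X_\red}w$ by the forward direction of (4).

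The main obstacle is part (1): verifying that membership in $\cgl Xw$ is detected by $t^*$ requires careful use of the recessed condition and the uniqueness statement of \cite[Theorem~1.1]{as:flag}, together with the observation that $t_*$ preserves the relevant subcategories of the $s$-structure. Once (1) is established, parts (2)--(4) follow from the standard derived-category manipulations sketched above.
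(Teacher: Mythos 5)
Your proposal is correct in substance. For parts (2) and (3) it coincides with the paper's own proof: part (2) is immediate because every orbit closure is reduced and hence every $i_C$ factors through $X_\red$, and your part (3) argument (right $t$-exactness of $Lt^*$ in one direction; the truncation triangle, the isomorphism $h^N(Lt^*\cF) \cong t^*h^N(\cF)$, and the fact that $t^*$ kills no nonzero coherent sheaf in the other) is exactly the one given there. The genuine difference is in parts (1) and (4): the paper does not prove these at all but cites them from \cite[Proposition~4.1]{a} and \cite[\proprigid]{at:bs}, whereas you sketch direct arguments. Your nilradical-filtration approach to (1) is the right idea --- the successive quotients are quotients of $(\mathcal{N}^i/\mathcal{N}^{i+1}) \otimes t^*\cF$ and the tensor/recessed axioms keep these in $\cgl {X_\red}w$ --- though as written it leans on an unstated ``compatibility with closed immersions'' that would have to be extracted from the axioms of \cite{a} to be complete. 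One small repair in part (4): the claim that $Lt^*\Tl{N-1}\cF$ lies in $\q\Dmsl {X_\red}w$ cannot be justified by ``the forward direction of (4)'' applied to $\Tl{N-1}\cF$, since that is what the induction is trying to establish; instead, apply the forward direction to $h^N(\cF)[-N]$ (legitimate once $h^N(\cF) \in \q\cgl Xw$ is known) and use that $\q\Dmsl {X_\red}w$ is stable under shifts and extensions in the rotated truncation triangle. With that reading your induction closes, and the overall argument is sound.
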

There is a dual version of this lemma involving ``$\ge$'' categories and
the $t^!$ and $Rt^!$ functors, but this statement suffices for our needs.
\begin{proof}
Part~(1) is contained in~\cite[Proposition~4.1]{a}, and part~(4) is
contained in~\cite[\proprigid]{at:bs}.  Part~(2) is obvious from the
definition.

It remains to prove part~(3).  If $\cF \in \Dml Xn$, then clearly $Lt^*\cF
\in \Dml {X_\red}n$, since $Lt^*$ is right $t$-exact.  Conversely, suppose
$\cF \notin \Dml Xn$. Let $k$ be the largest integer such that $h^k(\cF)
\ne 0$. Of course, we have $k > n$.  By applying $Lt^*$ to the
distinguished triangle
\[
\Tlt k\cF \to \cF \to h^k(\cF)[-k] \to
\]
and then forming the cohomology long exact sequence, one sees that
$h^k(Lt^*\cF) \cong t^*h^k(\cF)$.  The functor $t^*$ kills no nonzero
sheaf, so $h^k(Lt^*\cF) \ne 0$, and hence $Lt^*\cF \notin \Dml {X_\red}n$.
\end{proof}

%%%%%%%%%%%%%%%%%%%%%%%%%%%%%%%%%%%%%%%%%%%%%%%%%%%%%%%%%%%%%%%%%%%%%%%%%%%
\section{Pure Sheaves}
\label{sect:pure}
%%%%%%%%%%%%%%%%%%%%%%%%%%%%%%%%%%%%%%%%%%%%%%%%%%%%%%%%%%%%%%%%%%%%%%%%%%%

Let $q: \orb X \to \Z$ be a baric perversity.  The category of $\q\Dsp Xw$
of pure objects is not stable under the standard truncation functors, so
the standard $t$-structure on $\Db X$ does not induce a $t$-structure on
$\q\Dsp Xw$.  Our goal in this section is to find an ``easy'' $t$-structure
on $\q\Dsp Xw$ that resembles the standard $t$-structure on $\Db X$ as
closely as possible.

Let us define full subcategories of $\Dm X$ and $\Dp X$ by
\begin{align*}
\q\Dmll Xnw &= (\Dml Xn * \q\Dmslt Xw) \cap \q\Dmsl Xw, \\
\q\Dpgg Xnw &= \Dpg Xn \cap \q\Dpsg Xw. 
\end{align*}
(For the ``$*$'' operation on triangulated categories, see~\cite[\S
1.3.9]{bbd}.)  Note that the definition of $\q\Dmll Xnw$ involves the
condition ``$<\!w$,''
{\it sic}.  Let $\q\Dll Xnw$ and $\q\Dgg Xnw$ denote the bounded versions
of these categories, {\it i.e.}, the intersections of the above categories
with $\Db X$.  These categories may be pictured as follows:
\[
\q\Dll Xnw: \incgr{dll}
\qquad\qquad
\q\Dgg Xnw: \incgr{dgg}
\]
Finally, we denote the intersections of these categories with the category
$\q\Dsp Xw$ of pure objects by
\[
\q\Dlp Xnw = \q\Dmll Xnw \cap \q\Dsp Xw
\quad\text{and}\quad
\q\Dgp Xnw = \q\Dpgg Xnw \cap \q\Dsp Xw,
\]
and we draw them thus:
\[
\q\Dlp Xnw: \incgr{dlp}
\qquad\qquad
\q\Dgp Xnw: \incgr{dgp}
\]
The pictures suggest that $(\q\Dlp X0w, \q\Dgp X0w)$ is a $t$-structure on
$\q\Dsp Xw$.  The main result of this section states that this is, in fact,
the case.

\begin{lem}\label{lem:pb-pure-exact}
Let $j: U \hto X$ be the inclusion of an open subscheme, and $i: Z \hto X$
the inclusion of a closed subscheme.  Then:
\begin{enumerate}
\item $j^*$ takes $\q\Dmll Xnw$ to $\q\Dmll Unw$ and $\q\Dpgg Xnw$ to
$\q\Dpgg Unw$.
\item $Li^*$ takes $\q\Dmll Xnw$ to $\q\Dmll Znw$.
\item $Ri^!$ takes $\q\Dpgg Xnw$ to $\q\Dpgg Znw$.
\item $i_*$ takes $\q\Dmll Znw$ to $\q\Dmll Xnw$ and $\q\Dpgg Znw$ to
$\q\Dpgg Xnw$.
\end{enumerate}
\end{lem}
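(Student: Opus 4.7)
The statement has a natural binary decomposition: parts (1)--(4) each concern one of $j^*$, $Li^*$, $Ri^!$, $i_*$, and each of these splits into a ``$\le$'' subcase about $\q\Dmll{\bullet}nw$ and (except for $Li^*$ and $Ri^!$) a ``$\ge$'' subcase about $\q\Dpgg{\bullet}nw$. My plan is to reduce every subcase to two already-known inputs: the standard-$t$-structure exactness properties of the four functors (namely $j^*$ is $t$-exact, $i_*$ is $t$-exact, $Li^*$ is right $t$-exact, and $Ri^!$ is left $t$-exact), and the baric analogue Lemma~\ref{lem:baric-res}.

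The ``$\ge$'' subcases are immediate from the definition $\q\Dpgg Xnw = \Dpg Xn \cap \q\Dpsg Xw$: one simply intersects the two preservation statements. Specifically, $j^*$ preserves $\Dpg{\bullet}n$ by $t$-exactness and $\q\Dpsg{\bullet}w$ by Lemma~\ref{lem:baric-res}(1); $Ri^!$ preserves $\Dpg{\bullet}n$ by left $t$-exactness and $\q\Dpsg{\bullet}w$ by Lemma~\ref{lem:baric-res}(3); and $i_*$ preserves $\Dpg{\bullet}n$ by $t$-exactness and $\q\Dpsg{\bullet}w$ by Lemma~\ref{lem:baric-res}(4). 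No further argument is needed.

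The ``$\le$'' subcases are slightly more involved because of the $*$-operation in $\q\Dmll Xnw = (\Dml Xn * \q\Dmslt Xw) \cap \q\Dmsl Xw$. The key observation I will record is that any triangulated functor $F$ which carries $\Dml Xn$ into $\Dml Yn$ and $\q\Dmslt Xw$ into $\q\Dmslt Yw$ automatically carries $\Dml Xn * \q\Dmslt Xw$ into $\Dml Yn * \q\Dmslt Yw$: given $\cF$ in the former, applying $F$ to the defining distinguished triangle $A\to\cF\to B\to$ produces a distinguished triangle witnessing $F(\cF)$ in the latter. With this lemma on $*$ in hand, each of the three subcases reduces to checking preservation of the three subcategories $\Dml{\bullet}n$, $\q\Dmslt{\bullet}w$, and $\q\Dmsl{\bullet}w$ separately: for $j^*$ and $i_*$, $t$-exactness handles $\Dml{\bullet}n$ and parts (1), (4) of Lemma~\ref{lem:baric-res} handle the other two; for $Li^*$, right $t$-exactness handles $\Dml{\bullet}n$ and part~(2) of Lemma~\ref{lem:baric-res} handles the other two.

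I do not expect any real obstacle here; the proof is purely a bookkeeping exercise assembling Lemma~\ref{lem:baric-res} with textbook $t$-exactness properties. The only point worth making explicit is the stability of the $*$-operation under triangulated functors preserving both factors, which is essentially a one-line application of functoriality on distinguished triangles.
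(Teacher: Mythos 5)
Your proof is correct and follows exactly the route the paper takes: the paper's proof is the one-line remark that the lemma is ``immediate from Lemma~\ref{lem:baric-res} and well-known $t$-exactness properties of these functors with respect to the standard $t$-structure,'' and your write-up simply fills in those details (including the easy stability of the $*$-operation under triangulated functors preserving both factors).
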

\begin{proof}
Immediate from Lemma~\ref{lem:baric-res} and well-known $t$-exactness
properties of these functors with respect to the standard $t$-structure.
\end{proof}

\begin{lem}\label{lem:pure-hom}
If $\cF \in \q\Dmll Xnw$ and $\cG \in \q\Dpgtg Xnw$, then
$\Hom(\cF,\cG) = 0$.  Conversely, if $\cF \in \q\Dmsl Xw$ and
$\Hom(\cF,\cG) = 0$ for all $\cG \in \q\Dgtp Xnw$, then $\cF \in \q\Dmll
Xnw$.
\end{lem}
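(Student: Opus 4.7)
The plan is to prove the two implications separately. For the forward direction, I would fix a triangle $A \to \cF \to B \to$ witnessing $\cF \in \Dml Xn * \q\Dmslt Xw$ (so $A \in \Dml Xn$ and $B \in \q\Dmsl X{w-1}$) and examine the long exact sequence
\[
\Hom(B,\cG) \to \Hom(\cF,\cG) \to \Hom(A,\cG).
\]
The right-hand term vanishes by standard $t$-structure orthogonality (since $A \in \Dml Xn$ and $\cG \in \Dpgt Xn$), and the left-hand term vanishes by axiom~(2) of the baric structure (since $B \in \q\Dmsl X{w-1}$ and $\cG \in \q\Dpsg Xw$).

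For the converse, my proposal is to show that the standard truncation triangle
\[
\Tl n\cF \to \cF \to \Tgt n\cF \to
\]
itself witnesses $\cF \in \q\Dmll Xnw$; it suffices to prove $\Tgt n\cF \in \q\Dmslt Xw$. Since $\q\Dmsl Xw$ is defined cohomology-by-cohomology, both standard truncations inherit the baric bound, and since $\cF \in \Dm X$, the object $\Tgt n\cF$ is in fact bounded. I would then form the baric truncation
\[
\q\Bl{w-1}(\Tgt n\cF) \to \Tgt n\cF \xrightarrow{\phi} B \to,
\qquad B := \q\Bg w(\Tgt n\cF),
\]
and aim to deduce $B = 0$. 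Since $\q\Dsl Xw$ is thick and contains both $\Tgt n\cF$ and $\q\Bl{w-1}(\Tgt n\cF)$, we get $B \in \q\Dsp Xw$; provided also $B \in \Dgt Xn$, $B$ lies in $\q\Dgtp Xnw$, and the hypothesis forces the composite $\cF \to \Tgt n\cF \xrightarrow{\phi} B$ to vanish.

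Since the cone of $\cF \to \Tgt n\cF$ is $\Tl n\cF[1] \in \Dl X{n-1}$ and $\Hom(\Tl n\cF[1], B) = 0$ (as $B \in \Dgt Xn$), the vanishing composite forces $\phi = 0$. The baric truncation triangle then exhibits $B$ as a direct summand of $\q\Bl{w-1}(\Tgt n\cF)[1] \in \q\Dsl X{w-1}$, so $B \in \q\Dsl X{w-1} \cap \q\Dsg Xw$; axiom~(2) yields $\Hom(B,B) = 0$, hence $B = 0$. The main technical obstacle is the assertion that $\q\Bg w$ preserves the cohomological bound $\Dgt Xn$: this $t$-exactness of baric truncation for the standard $t$-structure should follow from the cohomology-sheaf characterization of $\q\Dmsl Xw$ together with the Serre-subcategory property of $\q\cgl Xw$ in $\cg X$, so that on each $h^k$, $\q\Bl{w-1}$ computes the maximal subsheaf lying in $\q\cgl X{w-1}$; this ought to be extractable from the construction in~\cite[\sectbariccoh]{at:bs}.
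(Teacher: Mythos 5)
Your proof is correct and follows essentially the same route as the paper's: the forward direction uses the defining triangle for the $*$-product together with the two orthogonality axioms, and the converse shows that $\q\Bg w\Tgt n\cF$ lies in $\q\Dgtp Xnw$ and is forced to vanish by the Hom-vanishing hypothesis (the paper phrases this as a contradiction, you phrase it as a split-summand argument, but these are the same computation). The fact you flag as the main obstacle---left $t$-exactness of $\q\Bg w$ for the standard $t$-structure---is precisely what the paper invokes as a known property of the baric truncation functors from~\cite{at:bs}, so your argument is complete modulo the same citation; be aware only that your proposed mechanism for it (computing $\q\Bl{w-1}$ on each $h^k$ as a maximal subsheaf) is not literally how the truncation acts in general, since $\q\Dpsg Xw$ is not defined cohomology-by-cohomology.
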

\begin{proof}
First, suppose $\cF \in \q\Dmll Xnw$, and find a distinguished triangle
$\cF' \to \cF \to \cF'' \to$ with $\cF' \in \Dml Xn$ and $\cF'' \in \Dmslt
Xw$.  If $\cG \in \q\Dpgtg Xnw$, then $\Hom(\cF',\cG) = \Hom(\cF'',\cG) =
0$, so we see that $\Hom(\cF,\cG) = 0$ as well.

On the other hand, given $\cF \in \q\Dmsl Xw$ such that $\Hom(\cF,\cG) =
0$ for all $\cG \in \q\Dgtp Xnw$, form the distinguished triangle
\[
\Tl n\cF \to \cF \to \Tgt n\cF \to.
\]
To show that $\cF \in \q\Dmll Xnw$, it suffices to show that $\Tgt n\cF
\in \q\Dslt Xw$.  Suppose this is not the case, and let $\cG = \q\Bg
w\Tgt n\cF$.  Since $\q\Bg w$ is left $t$-exact, we see that $\cG \in
\q\Dpgtg Xnw$.  Clearly, $\Hom((\Tl n\cF)[1], \cG) = 0$, so the fact that 
$\Hom(\Tgt n\cF, \cG) \ne 0$ implies that $\Hom(\cF,\cG) \ne 0$, a
contradiction. 
\end{proof}

\begin{lem}\label{lem:pure-dt}
For any $\cF \in \q\Dsl Xw$, there is a distinguished triangle $\cF' \to
\cF
\to \cF'' \to$ with $\cF' \in \q\Dll Xnw$ and $\cF'' \in \Dgtp Xnw$.
\end{lem}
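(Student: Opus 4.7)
The plan is to produce the triangle by composing standard cohomological truncation with baric truncation. Starting from $\cF \in \q\Dsl Xw$, form the standard triangle $\Tl n\cF \to \cF \to \Tgt n\cF \to$. Both outer terms lie in $\q\Dsl Xw$ because this subcategory is defined pointwise by a cohomological condition (equation~\eqref{eqn:baric-defn}). Apply the baric truncation triangle to the positive part:
\[
\q\Bl{w-1}\Tgt n\cF \to \Tgt n\cF \to \q\Bg w\Tgt n\cF \to .
\]
Set $\cF'' = \q\Bg w \Tgt n\cF$ and define $\cF'$ to be the fiber of $\cF \to \cF''$, so that $\cF' \to \cF \to \cF'' \to$ is distinguished.

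Next I verify $\cF'' \in \Dgtp Xnw$. It lies in $\q\Dsg Xw$ by construction of $\q\Bg w$, and in $\q\Dsl Xw$ because $\q\Dsl Xw$ is a thick subcategory containing both $\Tgt n\cF$ and $\q\Bl{w-1}\Tgt n\cF$; hence $\cF'' \in \q\Dsp Xw$. The main step is showing $\cF'' \in \Dgt Xn$, and this is where I expect the only real work to lie. I plan to argue as follows: $\Tgt n \cF$ has no cohomology in degrees $\le n$, so the long exact sequence of the baric truncation triangle yields an injection $h^k(\cF'') \hookrightarrow h^{k+1}(\q\Bl{w-1}\Tgt n\cF)$ for $k \le n$. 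The target lies in $\q\cgl X{w-1}$, and since $\q\cgl X{w-1}$ is a Serre subcategory, so does $h^k(\cF'')$. Thus $\Tl n\cF'' \in \q\Dmsl X{w-1}$. Because $\cF'' \in \q\Dsg Xw$, we get $\Hom(\Tl n\cF'',\cF'') = 0$, forcing the canonical morphism $\Tl n\cF'' \to \cF''$ to vanish. That morphism is an isomorphism on $h^k$ for $k \le n$, so each such $h^k(\cF'')$ must vanish, giving $\cF'' \in \Dgt Xn$ as required.

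Finally I show $\cF' \in \Dll Xnw$. Applying the octahedral axiom to the composition $\cF \to \Tgt n\cF \to \cF''$ and the two triangles above produces a distinguished triangle
\[
\Tl n\cF \to \cF' \to \q\Bl{w-1}\Tgt n\cF \to ,
\]
which exhibits $\cF'$ as an element of $\Dml Xn * \q\Dmslt Xw$. Since $\cF$ and $\cF''$ both lie in $\q\Dsl Xw$ (a thick subcategory), so does $\cF'$; boundedness of all the ingredients keeps us in $\Db X$. Combined, $\cF' \in \q\Dll Xnw$.

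The only nontrivial step is the left $t$-exactness statement ``$\q\Bg w$ preserves $\Dgt Xn$,'' which I handle above via the defining Hom-vanishing of $\q\Dpsg Xw$ together with the Serre subcategory property of $\q\cgl X{w-1}$; everything else is a formal consequence of the baric structure axioms and the octahedral axiom.
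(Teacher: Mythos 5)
Your proof is correct and follows essentially the same route as the paper: set $\cF'' = \q\Bg w\Tgt n\cF$, take $\cF'$ to be the cocone, and use the octahedron to exhibit $\cF' \in \Dl Xn * \q\Dslt Xw$. The only difference is that where the paper simply invokes the left $t$-exactness of $\q\Bg w$ for the standard $t$-structure, you supply a direct verification of it (via the injection $h^k(\cF'') \hto h^{k+1}(\q\Bl{w-1}\Tgt n\cF)$ and the orthogonality defining $\q\Dpsg Xw$), which is a harmless and self-contained elaboration.
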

\begin{proof}
Let $\cF'' = \q\Bg w\Tgt n\cF$.  Since $\Tgt n$ is right baryexact, we have
$\Tgt n\cF \in \Dgt Xn \cap \q\Dsl Xw$.  Then, the left $t$-exactness of
$\q\Bg w$ (with respect to the standard $t$-structure) implies that
\[
\cF'' = \q\Bg w\Tgt n\cF \in \Dgt Xn \cap \q\Dsl Xw \cap \q\Dsg Xw = \q\Dgp
Xnw.
\]
We also have a natural morphism $\cF \to \cF''$,
obtained by composing  $\cF \to \Tgt n\cF$ and $\Tgt
n\cF \to \q\Bg w\Tgt n\cF$.  Let $\cF'$ be the cocone of this morphism.  We
already know that $\cF' \in \q\Dsl Xw$.  The octahedral diagram below shows
that $\cF' \in \Dl Xn * \q\Dslt Xw$, so $\cF' \in \q\Dll Xnw$, as desired.
\[
\vcenter{\xymatrix@=10pt{
&&&& \Tgt n\cF \ar[ddddl]\ar[ddr]_{+1} \\ \\
&&&&& \Tl n\cF \ar[dlllll]|!{[uul];[ddll]}\hole
  \ar[ddddl]|!{[ddll];[drrr]}\hole \\
\cF \ar[uuurrrr]\ar[drrr] &&&&&&&& \q\Blt w\Tgt n\cF \ar[uuullll]
\ar[ulll]^{+1} \\
&&& \cF'' \ar[urrrrr]_{+1}\ar[ddr]^{+1} \\ \\
&&&& \cF' \ar[uuullll]\ar[uuurrrr]
}}\qedhere
\]
\end{proof}

\begin{prop}\label{prop:pure-t}
$(\q\Dlp X0w, \q\Dgp X0w)$ is a nondegenerate, bounded $t$-structure on
$\q\Dsp Xw$.
\end{prop}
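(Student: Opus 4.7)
The plan is to verify the four standard $t$-structure axioms together with boundedness and nondegeneracy, drawing on Lemmas \ref{lem:pure-hom} and \ref{lem:pure-dt} and the analogous properties of the standard $t$-structure on $\Db X$. Shift stability is immediate from the defining formulas: the functor $[1]$ sends $\Dml Xn$ into $\Dml X{n-1}$ while preserving the baric truncation $\q\Dmslt Xw$, whence $\q\Dlp X0w[1] \subset \q\Dlp X{-1}w \subset \q\Dlp X0w$, and symmetrically $\q\Dgp X0w[-1] \subset \q\Dgp X0w$.

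For the Hom-vanishing axiom, take $\cF \in \q\Dlp X0w$ and $\cG \in \q\Dgp X1w$. Since $\cG \in \Dg X1 \cap \q\Dsg Xw \subset \q\Dpgtg X0w$, Lemma \ref{lem:pure-hom} yields $\Hom(\cF,\cG) = 0$. For the truncation triangle, apply Lemma \ref{lem:pure-dt} with $n = 0$ to any $\cF \in \q\Dsp Xw$, producing $\cF' \to \cF \to \cF'' \to$ with $\cF' \in \q\Dll X0w$ and $\cF'' \in \q\Dgp X1w \subset \q\Dsp Xw$. The key observation is that this triangle stays inside $\q\Dsp Xw$: because $\cF$ and $\cF''$ both lie in the triangulated subcategory $\q\Dsg Xw$, so does $\cF'$; combined with $\cF' \in \q\Dll X0w \subset \q\Dsl Xw$, this places $\cF'$ in $\q\Dlp X0w$.

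Boundedness is routine: any $\cF \in \q\Dsp Xw$ already lies in $\Dl Xn \cap \Dg X{-n}$ for some $n$, and such an object is trivially a member of $\q\Dlp Xnw \cap \q\Dgp X{-n}w$ (take the trivial $*$-factorization $\cF \to \cF \to 0$). Nondegeneracy on the ``$\ge$'' side follows from $\bigcap_n \Dg Xn = 0$. On the ``$\le$'' side, suppose $\cF \in \bigcap_n \q\Dlp X{-n}w$. Choosing $n$ large enough that $-n$ lies strictly below the cohomological support of the bounded object $\cF$, the defining triangle $\cF' \to \cF \to \cF'' \to$ with $\cF' \in \Dml X{-n}$ and $\cF'' \in \q\Dmslt Xw$ forces an isomorphism $h^k(\cF) \cong h^k(\cF'')$ in every relevant degree, so $\cF \in \q\Dmslt Xw$; together with $\cF \in \q\Dsg Xw$, the baric Hom-vanishing axiom yields $\Hom(\cF,\cF) = 0$, hence $\cF = 0$.

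The only genuinely substantive step is the truncation-triangle axiom, and within it the observation that both pieces of the triangle from Lemma \ref{lem:pure-dt} land in $\q\Dsp Xw$ when the starting object is pure; the rest is formal bookkeeping around the already-established lemmas and the thickness of the baric subcategories.
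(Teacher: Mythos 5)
Your proof is correct and follows essentially the same route as the paper: the truncation triangle comes from Lemma~\ref{lem:pure-dt} with the observation that both pieces remain in $\q\Dsp Xw$, orthogonality comes from Lemma~\ref{lem:pure-hom}, and nondegeneracy/boundedness are reduced to the corresponding properties of the standard $t$-structure and the baric orthogonality axiom. Your write-up is somewhat more explicit than the paper's (notably the ``$\le$''-side nondegeneracy argument via $\Hom(\cF,\cF)=0$), but there is no substantive difference.
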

\begin{proof}
It is clear that $\q\Dll X0w \subset \q\Dll X1w$ and $\q\Dgg X0w \supset
\q\Dgg X1w$.  Next, given $\cF \in \q\Dsp Xw$, form a distinguished
triangle $\cF' \to \cF \to \cF'' \to$ as in Lemma~\ref{lem:pure-dt}. 
According to that lemma, $\cF''$ necessarily lies in $\q\Dsp Xw$, so it
follows that $\cF'$ does as well.  From Lemma~\ref{lem:pure-hom}, we see
that $(\q\Dlp X0w, \q\Dgp X0w)$ is indeed a $t$-structure on $\q\Dsp Xw$.

It is clear that no nonzero object can belong to $\q\Dgp Xnw$, or even
$\q\Dgg Xnw$, for all $n$.  On the other hand, the only objects that belong
to $\q\Dll Xnw$ for all $n$ are those in $\q\Dslt Xw$, and only the zero
object lies in $\q\Dslt Xw \cap \q\Dsp Xw$.  Thus, this $t$-structure is
nondegenerate.  Its boundedness then follows from the boundedness of the
standard $t$-structure on $\Db X$.
\end{proof}

\begin{defn}
The $t$-structure of Proposition~\ref{prop:pure-t} is called the
\emph{purified standard $t$-structure}, or simply the \emph{purified
$t$-structure}, on $\q\Dsp Xw$.  Its truncation functors are denoted
\[
\q\Tlp nw: \q\Dsp Xw \to \q\Dlp Xnw
\qquad\text{and}\qquad
\q\Tgp nw: \q\Dsp Xw \to \q\Dgp Xnw.
\]
\end{defn}

%%%%%%%%%%%%%%%%%%%%%%%%%%%%%%%%%%%%%%%%%%%%%%%%%%%%%%%%%%%%%%%%%%%%%%%%%%%
\section{Pure-Perverse Coherent Sheaves}
\label{sect:purep}
%%%%%%%%%%%%%%%%%%%%%%%%%%%%%%%%%%%%%%%%%%%%%%%%%%%%%%%%%%%%%%%%%%%%%%%%%%%

Let $q: \orb X \to \Z$ be a function.  In this section, we construct a new
$t$-structure on the category $\q\Dsp Xw$ of pure objects, called the
\emph{pure-perverse $t$-structure}.  It is related to the purified standard
$t$-structure in the same way the perverse coherent $t$-structure
of~\cite{bez:pc} is related to the standard $t$-structure on $\Db X$.  We
then prove that the heart of the pure-perverse $t$-structure is contained
in the heart of a suitable staggered $t$-structure $(\ru\Dl X0, \ru\Dg
X0)$.  This is an important step towards the Purity Theorem, as it will
enable us to prove in the next section that a certain operation in the
heart of the staggered $t$-structure can be replaced by one in the heart of
the pure-perverse $t$-structure.

The construction of the pure-perverse $t$-structure closely follows the
construction of the perverse coherent $t$-structure in~\cite{bez:pc}.  As in {\it loc. cit.}, the pure-perverse $t$-structure depends on the choice
of a monotone and comonotone Deligne-Bezrukav\-nikov perversity, {\it i.e.}, a
function $p: \orb X \to \Z$ satisfying
\[
0 \le p(C') - p(C) \le \cod C' - \cod C
\]
whenever $C' \subset \overline C$.

Fix a monotone and comonotone Deligne--Bezrukavnikov perversity $p: \orb X
\to \Z$.  Define full subcategories of $\q\Dmsl Xw$ and $\q\Dpsg Xw$ as
follows:
\begin{gather*}
\pq\Dmll Xnw = \{ \cF \mid
\text{$Li_C^*\cF|_C \in \q\Dmll C{n+p(C)}w$ for all $C \in \orb X$} \} \\
\pq\Dpgg Xnw = \D(\bpcq\Dmll X{-n}{-w})
\end{gather*}
It follows from the gluing theorem for baric structures~\cite[\thmbaricglue]{at:bs} and induction on the number of orbits that $\pq\Dmll Xnw \subset \q\Dmsl Xw$, and hence that $\pq\Dpgg Xnw \subset \q\Dpsg Xw$.

The set $\orb X$ is, of course, partially ordered by inclusion.  Suppose
for a moment that this partial order is, in fact, a total order.  In this
case, we can draw pictures of the above categories similar to our pictures
of other subcategories of $\Db X$, by regarding the third axis of the grid
as representing orbits in $\orb X$, with larger orbits closer to the
reader, and smaller orbits father away.  Since $p$ takes larger values on
smaller orbits, we may draw the bounded versions of $\pq\Dmll Xnw$ and
$\pq\Dpgg Xnw$ thus:
\[
\pq\Dll Xnw: \incgr{pdll}
\qquad\qquad
\pq\Dgg Xnw: \incgr{pdgg}
\]
(The picture of $\pq\Dgg Xnw$ has been drawn from an unusual perspective to
make its structure visible.)  We will also work with the intersections of
these categories with the pure category $\q\Dsp Xw$:
\begin{align*}
\pq\Dlp Xnw = \pq\Dll Xnw \cap \q\Dsp Xw: \incgr{pdlp}
\\
\pq\Dgp Xnw = \pq\Dgg Xnw \cap \q\Dsp Xw: \incgr{pdgp}
\end{align*}
These pictures do not make much sense if $\orb X$ is not totally ordered,
but they may nevertheless be a helpful source of intuition.

\begin{lem}\label{lem:pb-purep-exact}
Let $j: U \hto X$ be the inclusion of an open subscheme, and $i: Z \hto X$
the inclusion of a closed subscheme.  Then:
\begin{enumerate}
\item $j^*$ takes $\pq\Dmll Xnw$ to $\pq\Dmll Unw$ and $\pq\Dpgg Xnw$ to
$\pq\Dpgg Unw$.
\item $Li^*$ takes $\pq\Dmll Xnw$ to $\pq\Dmll Znw$.
\item $Ri^!$ takes $\pq\Dpgg Xnw$ to $\pq\Dpgg Znw$.
\item $i_*$ takes $\pq\Dmll Znw$ to $\pq\Dmll Xnw$ and $\pq\Dpgg Znw$ to
$\pq\Dpgg Xnw$.
\end{enumerate}
\end{lem}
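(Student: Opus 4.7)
The plan is to handle the assertions about $\pq\Dmll$ directly from the orbit-by-orbit definition, and then to deduce the assertions about $\pq\Dpgg$ via Serre--Grothendieck duality. For (1) and (2) applied to $\pq\Dmll$, the key observation is that orbits of an open subscheme $U$ (resp.\ a closed subscheme $Z$) coincide with those orbits $C$ of $X$ that happen to lie in $U$ (resp.\ $Z$), and the perversity $p$ restricts without modification. Given $\cF \in \pq\Dmll Xnw$ and an orbit $C \subset U$, the closure $\overline C$ already lies in $U$, so the inclusion $\overline C \hookrightarrow X$ factors through $U$; this yields $Li_C^{U*}(j^*\cF)|_C = Li_C^{X*}\cF|_C$, which lies in $\q\Dmll C{n+p(C)}w$ by hypothesis. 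The same factorization argument, this time through $Z$, handles (2).

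Part (4) for $\pq\Dmll$ is the main difficulty. Given $\cF \in \pq\Dmll Znw$ and $C \in \orb X$, the irreducibility of $C$ together with the closedness and $G$-invariance of $Z$ forces either $C \cap Z = \emptyset$ or $C \subset Z$. In the former case, $Li_C^{X*}(i_*\cF)|_C$ vanishes for support reasons. In the latter case, one uses the factorization $Li_C^{X*} = Li_C^{Z*} \circ Li^*$ and analyzes the counit triangle $\cK \to Li^* i_* \cF \to \cF \to$, where the cohomology sheaves of $\cK$ are built from higher Tor sheaves of the form $\mathrm{Tor}^{\cO_X}_k(\cO_Z, -)$ applied to $h^\bullet(\cF)$. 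These Tor sheaves arise from the conormal bundle of $Z$, which by the recessed hypothesis \eqref{eqn:rec} lies in $\cgl {C'}{-1}$ for each orbit $C' \subset Z$. The balance between the step-decreases and the cohomological shifts keeps $Li_C^{Z*}\cK|_C$ inside $\q\Dmll C{n+p(C)}w$, so that the right-hand term and the middle term of the triangle, after pulling back to $C$, both land in the required category.

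Parts (1), (3), and (4) for the $\pq\Dpgg$ categories then follow formally from the definition $\pq\Dpgg Xnw = \D(\bpcq\Dmll X{-n}{-w})$ combined with the standard duality intertwiners $\D \circ j^* = j^* \circ \D$, $\D \circ Li^* = Ri^! \circ \D$, and $\D \circ i_* = i_* \circ \D$. Each $\pq\Dpgg$-part transforms into the corresponding $\pq\Dmll$-part applied to the doubly dualized perversities $(\dualp, \dualq)$, which has already been handled. The principal obstacle is thus the Tor analysis in Case (4) for $\pq\Dmll$; the recessed condition \eqref{eqn:rec} was introduced precisely to control the discrepancy between $Li^* i_*$ and the identity functor, so the argument should go through cleanly once the conormal-bundle estimate is combined with the multiplicativity of the $s$-structure and the monotonicity properties built into $p$.
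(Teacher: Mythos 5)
Your handling of parts (1)--(3) and of the $\pq\Dpgg$ half of (4) agrees with the paper (one small slip: for an orbit $C \subset U$ it is \emph{not} true that $\overline C \subset U$; the argument survives only because pulling back along $\overline C \hto X$ and then restricting to the open piece $C$ gives the same answer as pulling back along $\overline C \cap U \hto U$). The genuine problem is the first half of part (4), where your route diverges from the paper's and contains a gap. You propose to compare $Li^*i_*\cF$ with $\cF$ via the triangle $\cK \to Li^*i_*\cF \to \cF \to$ and to bound the cohomology of $\cK$, built from $\mathrm{Tor}^{\cO_X}_k(\cO_Z,-)$, using condition~\eqref{eqn:rec}. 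But \eqref{eqn:rec} controls only the conormal sheaf $i_C^*\cI_C|_C$ of an \emph{orbit closure}, not of an arbitrary $G$-invariant closed subscheme $Z$; and even with control of the conormal sheaf of $Z$, the higher Tor sheaves are not its tensor powers. The assertion that ``the balance between the step-decreases and the cohomological shifts'' keeps $Li_C^{Z*}\cK|_C$ inside $\q\Dmll C{n+p(C)}w$ is precisely the hard estimate, and you have not supplied it. A statement of the required shape does exist in the paper --- Lemma~\ref{lem:pullback-closed}, asserting $t^*h^{-r}(Li^*\cF) \in \cgl C{w-r}$ --- but it is proved only for a \emph{closed} orbit $C$, and its proof rests on Proposition~\ref{prop:ext-closed}, which requires the $s$-structure to be \emph{split}; that hypothesis is not in force in Section~4, where this lemma must hold. (A secondary worry: to pass from the two outer terms of your triangle to the middle one you need $\q\Dmll C{n+p(C)}w$ to be extension-closed, which is not obvious from its definition as a $*$-product.)

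The paper circumvents the Tor computation entirely. It first reduces to the case of a \emph{closed} orbit $C \subset Z$ (a non-closed $C$ is handled by restricting to the open subscheme $X \ssm (\overline C \ssm C)$ and invoking part (1)), and then argues by orthogonality: if $Li_C^*i_*\cF \notin \q\Dmll C{n+p(C)}w$, Lemma~\ref{lem:pure-hom} yields $\cG \in \q\Dgtp C{n+p(C)}w$ with $\Hom(Li_C^*i_*\cF,\cG) \ne 0$, hence by adjunction $\Hom(\cF, Ri^!i_{C*}\cG) \ne 0$; the local-cohomology exact sequence over $Z$ relative to $C$ and $Z \ssm C$, combined with Lemmas~\ref{lem:red}, \ref{lem:pb-pure-exact}, and~\ref{lem:pure-hom}, forces that Hom group to vanish, a contradiction. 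To repair your argument you must either prove the step-drop estimate for the Tor sheaves without the split hypothesis (essentially a stronger form of Lemma~\ref{lem:pullback-closed}) or switch to this adjunction/orthogonality argument.
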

\begin{proof}
Parts~(1) and~(2) are immediate from the definition of $\pq\Dmll Xnw$, and
part~(3) follows by duality.  Similarly, because $i_*$ commutes with $\D$,
the second part of part~(4) follows from the first part.

It remains to show that if $\cF \in \pq\Dmll Znw$, then $i_*\cF \in
\pq\Dmll Xnw$.  We must show that for any orbit $C \in \orb X$,
$Li_C^*i_*\cF|_C \in \q\Dmll C{n+p(C)}w$.  In fact, it suffices to
consider the case where $C$ is a closed orbit contained in $Z$: if $C
\not\subset Z$, then $Li_C^*i_*\cF|_C = 0$, and if $C$ is not closed,
the operation $Li_C^*(\cdot)|_C$ factors as restriction to the open subscheme $V = X \ssm
(\overline C \ssm C)$ followed by pullback to the closed subscheme $C
\subset V$, and we already know by part~(1) that restriction to $V$ takes
$\pq\Dmll Xnw$ to $\pq\Dmll Vnw$.

Assume, therefore, that $C$ is a closed orbit contained in $Z$.  If $\cF \in\pq\Dmll Znw$ but
$Li_C^*i_*\cF \notin \q\Dmll C{n+p(C)}w$, then, by
Lemma~\ref{lem:pure-hom}, there exists an object $\cG \in \q\Dgtp
C{n+p(C)}w$ such that $\Hom(Li_C^*i_*\cF, \cG) \ne 0$.  By adjunction, it follows that $\Hom(\cF, Ri^! i_{C*}\cG) \ne 0$, and by
Lemma~\ref{lem:pb-pure-exact}, we have $Ri^! i_{C*}\cG \in \q\Dpgtg
Z{n+p(C)}w$.  Now, let $W = Z \ssm C$, and consider the exact sequence
\[
\lim_{\substack{\to \\ Z'}} \Hom(Li^*_{Z'}\cF, Ri^!_{Z'}Ri^!i_{C*}\cG) \to
\Hom(\cF,Ri^!i_{C*}\cG) \to \Hom(\cF|_W,Ri^!i_{C*}\cG|_W),
\]
where $i_{Z'}: Z' \hto Z$ ranges over all closed subscheme structures on
$C \subset Z$.  (For an explanation of this exact sequence, see, for
instance, the proof of~\cite[Proposition~2]{bez:pc}.  Similar sequences
will be used in Lemmas~\ref{lem:purep-hom} and~\ref{lem:purep-stag} and 
in Proposition~\ref{prop:skew-orth}.)  The last term vanishes since
$Ri^!i_{C*}\cG|_W = 0$. Moreover we have $Li^*_{Z'}\cF \in \q\Dmll {Z'}{n+p(C)}w$ for any
subscheme structure, by Lemma~\ref{lem:red}. On the other hand,
$Ri_{Z'}^!Ri^!i_{C*}\cG \in \q\Dpgtg {Z'}{n+p(C)}w$ by
Lemma~\ref{lem:pb-pure-exact}, so the first term above vanishes by
Lemma~\ref{lem:pure-hom}.  Thus, the middle term vanishes as well, a
contradiction.  Therefore, $i_*\cF \in \pq\Dmll Xnw$.
\end{proof}

\begin{lem}\label{lem:purep-hom-pre}
Let $d$ be the minimum value of $\cod C$ over all $C \in \orb X$.  If $\cF \in \q\Dmll Xnw$ and $\cG \in \cheq\Dmltl
X{d-n}{-w}$, then $\Hom(\cF,\D\cG) = 0$.
\end{lem}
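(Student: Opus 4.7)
The plan is to show that $\D\cG$ lies in the category $\q\Dpgtg Xnw = \Dpgt Xn \cap \q\Dpsg Xw$, after which Lemma~\ref{lem:pure-hom} immediately gives $\Hom(\cF,\D\cG) = 0$. So the work is split into a baric half and a standard-cohomological half for $\D\cG$.

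For the baric half, the hypothesis $\cG \in \cheq\Dmsl X{-w}$ combined with Serre--Grothendieck duality yields $\D\cG \in \q\Dpsg Xw$. The identity $\D(\cheq\Dsl X{-w}) = \q\Dsg Xw$ is recorded for bounded objects in Section~\ref{sect:baric}, and extends to the $\Dm$/$\Dp$ setting in a routine way using the finite injective dimension of $\omega_X$.

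For the cohomological half, the hypothesis $\cG \in \Dmlt X{d-n}$ (equivalently $\cG \in \Dml X{d-n-1}$) must yield $\D\cG \in \Dpg X{n+1}$. The key input is the global bound $\omega_X \in \Dg Xd$. I would establish this by induction on $|\orb X|$: for each orbit $C$ one has $Ri_C^!\omega_X|_C$ concentrated in degree $\cod C \ge d$ by definition of $d$, and the excision triangle $i_*Ri^!\omega_X \to \omega_X \to Rj_*j^*\omega_X \to$, with $j:U\hto X$ the inclusion of the union of the minimum-codimension (hence open) orbits and $i:X\ssm U \hto X$ its closed complement, propagates the stratum-wise bound to the global one. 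Once $\omega_X \in \Dg Xd$ is in hand, the standard bound $\D\cH \in \Dg Xd$ for a coherent sheaf $\cH$ extends by a Postnikov-truncation argument to $\D\cG \in \Dpg X{d-(d-n-1)} = \Dpg X{n+1}$.

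Combining the two halves gives $\D\cG \in \q\Dpgtg Xnw$, and Lemma~\ref{lem:pure-hom} then concludes. The main obstacle is propagating the bound $\omega_X \in \Dg Xd$ through $\cRHom(-, \omega_X)$ for a complex $\cG$ that is bounded above but possibly unbounded below; this is technical but relies in a standard way on truncation arguments and the finite injective dimension of $\omega_X$.
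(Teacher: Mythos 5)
There is a genuine gap, and it comes from a misreading of the hypothesis on $\cG$. You treat ``$\cG \in \cheq\Dmltl X{d-n}{-w}$'' as if it meant $\cG \in \Dmlt X{d-n} \cap \cheq\Dmsl X{-w}$, but by definition
\[
\cheq\Dmltl X{d-n}{-w} = \bigl(\Dmlt X{d-n} * \cheq\Dmslt X{-w}\bigr) \cap \cheq\Dmsl X{-w},
\]
so $\cG$ is only an extension: there is a triangle $\cG' \to \cG \to \cG'' \to$ with $\cG' \in \Dmlt X{d-n}$ and $\cG'' \in \cheq\Dmslt X{-w}$, and the piece $\cG''$ carries \emph{no} bound on its standard cohomological degree. (Indeed, any object of $\cheq\Dmslt X{-w}$, placed in arbitrarily high cohomological degree, lies in $\cheq\Dmltl X{d-n}{-w}$.) Consequently your ``cohomological half'' fails: $\D\cG$ need not lie in $\Dpgt Xn$, so $\D\cG \notin \q\Dpgtg Xnw$ in general and Lemma~\ref{lem:pure-hom} cannot be applied to $\D\cG$ directly.

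The repair is to run your argument on the pieces separately, which is what the paper does. Dualizing the triangle gives $\D\cG'' \to \D\cG \to \D\cG' \to$ with $\D\cG'' \in \q\Dpsgt Xw$; since $\cF \in \q\Dmsl Xw$, baric orthogonality alone kills $\Hom(\cF,\D\cG'')$ and $\Hom(\cF,\D\cG''[1])$, so $\Hom(\cF,\D\cG) \cong \Hom(\cF,\D\cG')$. The remaining piece satisfies $\cG' \in \Dmlt X{d-n} \cap \cheq\Dmsl X{-w}$ (the second membership because $\cG \in \cheq\Dmsl X{-w}$ and $\cG''[-1] \in \cheq\Dmsl X{-w}$), and for \emph{this} object your reasoning is sound: $\D(\Dmlt X{d-n}) \subset \Dpgt Xn$ by the definition of $d$ (your excision argument for $\omega_X \in \Dg Xd$ is the standard justification), so $\D\cG' \in \Dpgt Xn \cap \q\Dpsg Xw = \q\Dpgtg Xnw$, and Lemma~\ref{lem:pure-hom} gives $\Hom(\cF,\D\cG') = 0$. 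Your baric half and your treatment of the dualizing complex are otherwise fine.
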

\begin{proof}
We know, by the definition of $\cheq\Dmltl X{d-n}{-w}$, that there is a
distinguished triangle $\cG' \to \cG \to \cG'' \to$ with $\cG' \in \Dmlt
X{d-n}$ and $\cG'' \in \cheq\Dmslt X{-w}$.  The fact that $\cG \in
\cheq\Dmsl X{-w}$ implies that $\cG' \in \cheq\Dmsl X{-w}$ as well. 
Applying $\D$, we obtain a distinguished triangle
\[
\D\cG'' \to \D\cG \to \D\cG' \to.
\]
Note that $\D\cG'' \in \q\Dpsgt Xw$.  Since $\cF \in \q\Dmsl Xw$, we see
that $\Hom(\cF,\D\cG'') = \Hom(\cF,\D\cG''[1]) = 0$, so $\Hom(\cF,\D\cG)
\cong \Hom(\cF,\D\cG')$.  Now, $\cF$ arises in some distinguished triangle
\[
\cF' \to \cF \to \cF'' \to
\]
with $\cF' \in \Dml Xn$ and $\cF'' \in \q\Dmslt Xw$.  Note that the
definition of $d$ is such that $\D(\Dmlt X{d-n}) \subset \Dpgt Xn$. 
Therefore, we see that $\D\cG' \in \q\Dpsg Xw \cap \Dpgt Xn$.  It follows
that $\Hom(\cF',\D\cG') = 0$ and $\Hom(\cF'',\D\cG') = 0$.  We conclude
that $\Hom(\cF,\D\cG') = 0$, and hence that $\Hom(\cF,\D\cG) = 0$, as
desired.
\end{proof}

\begin{lem}\label{lem:purep-hom}
If $\cF \in \pq\Dmll Xnw$ and $\cG \in \pq\Dpgtg Xnw$, then $\Hom(\cF,\cG)
= 0$.
\end{lem}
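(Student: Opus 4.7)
The plan is to adapt the proof of Lemma~\ref{lem:pb-purep-exact}(4): I proceed by induction on $\#\orb X$, using a localization exact sequence and reducing to the single-orbit case, which is in turn handled by Lemma~\ref{lem:purep-hom-pre}.

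If $\orb X = \varnothing$ there is nothing to show. Otherwise I choose a closed orbit $C \subset X$, let $j\colon W = X \ssm C \hto X$ be the open complement, and use the exact sequence
\[
\lim_{\substack{\to\\ Z'}}\Hom_{Z'}(Li_{Z'}^*\cF,\,Ri_{Z'}^!\cG)\;\to\;\Hom_X(\cF,\cG)\;\to\;\Hom_W(\cF|_W,\cG|_W),
\]
in which $Z' \hto X$ ranges over all closed subscheme structures with $Z'_\red = C$, exactly as in the proof of Lemma~\ref{lem:pb-purep-exact}.  By Lemma~\ref{lem:pb-purep-exact}(1), $\cF|_W \in \pq\Dmll Wnw$ and $\cG|_W \in \pq\Dpgtg Wnw$, so the inductive hypothesis kills the third term. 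By Lemma~\ref{lem:pb-purep-exact}(2),(3), $Li_{Z'}^*\cF \in \pq\Dmll{Z'}nw$ and $Ri_{Z'}^!\cG \in \pq\Dpgtg{Z'}nw$ for every $Z'$, and since each $Z'$ has only the single orbit $C$, the proof reduces to the single-orbit case.

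Suppose then that $\orb X = \{C\}$, so that $d := \min_{C'} \cod C' = \cod C$. Unwinding the definitions (using Lemma~\ref{lem:red} to transfer conditions between $X$ and $X_\red$) converts the condition $\cF \in \pq\Dmll Xnw$ into $\cF \in \q\Dmll X{n+p(C)}w$; writing $\cG = \D\cG_0$ with $\cG_0 \in \bpcq\Dmltl X{-n}{-w}$, the same procedure shows $\cG_0 \in \cheq\Dmltl X{\cod C - n - p(C)}{-w}$. These are precisely the hypotheses of Lemma~\ref{lem:purep-hom-pre} with $n$ replaced by $n + p(C)$, which yields $\Hom(\cF,\cG) = \Hom(\cF,\D\cG_0) = 0$.

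The main technical obstacle is the single-orbit step just described: one must promote the pullback-to-$C$ conditions defining $\pq\Dmll$ (and dually $\bpcq\Dmltl$) into intrinsic triangle conditions on the possibly non-reduced scheme $X$. The direction $X \to X_\red$ is immediate from $t$-exactness of $Lt^*$ for the standard and baric $t$-structures, but the reverse direction requires a mild extension of Lemma~\ref{lem:red} to the filtered categories $\q\Dmll$ and $\cheq\Dmltl$; once that compatibility is in hand, the appeal to Lemma~\ref{lem:purep-hom-pre} is essentially bookkeeping.
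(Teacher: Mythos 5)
Your argument is correct, and it rests on the same two pillars as the paper's own proof: the localization exact sequence $\lim_{Z'}\Hom(Li^*_{Z'}\cF,Ri^!_{Z'}\cG)\to\Hom(\cF,\cG)\to\Hom(\cF|_U,\cG|_U)$, and Lemma~\ref{lem:purep-hom-pre} to dispose of the single-orbit situation. The difference is that you run the decomposition in the mirror direction. The paper chooses an \emph{open} orbit $C$ with corresponding open subscheme $U$, kills $\Hom(\cF|_U,\cG|_U)$ directly via Lemma~\ref{lem:purep-hom-pre} (since $U$ is a single-orbit scheme, $d=\cod C$ there), and handles the closed part by noetherian induction; you remove a \emph{closed} orbit, apply Lemma~\ref{lem:purep-hom-pre} to the single-orbit schemes $Z'$, and handle the open complement $W$ by induction. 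That forces you to induct on $\#\orb X$ rather than on closed subschemes, since $W$ is open; the substitution is legitimate, and your separate treatment of the one-orbit base case is necessary precisely because when $\#\orb X=1$ the first term of the sequence includes $\Hom(\cF,\cG)$ itself. Your bookkeeping in the single-orbit step is right: with $d=\cod C$ one has $d-(n+p(C))=-n+\dualp(C)$, which is exactly the shift the paper computes on $U$. Finally, the ``mild extension of Lemma~\ref{lem:red}'' you flag --- promoting the condition on $Lt^*$ to a condition on the possibly nonreduced single-orbit scheme, compatibly with the $*$-construction defining $\q\Dmll Xnw$ --- is the same step the paper elides when it writes ``By Lemma~\ref{lem:red}, $\cF|_U\in\q\Dmll U{n+p(C)}w$,'' so you are no worse off than the published argument; it does follow from parts (3) and (4) of that lemma together with the triangle $Lt^*\cF'\to Lt^*\cF\to Lt^*\cF''\to$.
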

\begin{proof}
We proceed by noetherian induction, and assume the statement is known on
all proper closed subschemes of $X$.  Let $\cG' = \D\cG \in \bpcq\Dmltl
X{-n}{-w}$.  Choose an open orbit $C \in \orb X$, and let $U \subset X$ be
the corresponding open subscheme.  By Lemma~\ref{lem:red}, $\cF|_U \in
\q\Dmll U{n+p(C)}w$ and $\cG'|_U \in \cheq\Dmltl U{-n+\dualp(C)}{-w}$.  Of
course, $-n + \dualp(C) = \cod C - (n+p(C))$, so by
Lemma~\ref{lem:purep-hom-pre}, $\Hom(\cF|_U,\D\cG'|_U) = 0$.  Now, let
$Z$ be the complementary closed subspace to $U$, and consider the exact
sequence 
\[
\lim_{\substack{\to \\ Z'}} \Hom(Li^*_{Z'}\cF, Ri^!_{Z'}\cG) \to
\Hom(\cF,\cG) \to \Hom(\cF|_U,\cG|_U),
\]
where $i_{Z'}: Z' \hto X$ ranges over all closed subscheme structures on
$Z$.  We have just seen that the last term vanishes.  Since $Li^*_{Z'}\cF
\in \pq\Dmll {Z'}nw$ and $Ri^!_{Z'}\cG \in \pq\Dpgtg {Z'}nw$, the first
term vanishes by induction.  So $\Hom(\cF,\cG) = 0$, as desired.
\end{proof}

\begin{prop}\label{prop:purep-t}
$(\pq\Dlp X0w, \pq\Dgp X0w)$ is a nondegenerate, bounded $t$-structure on
$\q\Dsp Xw$.
\end{prop}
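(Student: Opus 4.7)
My plan is to mirror the proof of Proposition~\ref{prop:pure-t}, with Lemmas~\ref{lem:pb-purep-exact} and~\ref{lem:purep-hom} playing the roles that Lemmas~\ref{lem:pb-pure-exact} and~\ref{lem:pure-hom} played there. The nesting axioms $\pq\Dlp X0w \subset \pq\Dlp X1w$ and $\pq\Dgp X0w \supset \pq\Dgp X1w$ are immediate from the definitions and the corresponding nesting of the purified categories orbit-by-orbit, while the semi-orthogonality $\Hom(\cF,\cG)=0$ for $\cF \in \pq\Dlp X0w$ and $\cG \in \pq\Dgp X1w = \pq\Dgtp X0w \cap \q\Dsp Xw$ is precisely Lemma~\ref{lem:purep-hom}.

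The main work---and the main obstacle---is producing, for each $\cF \in \q\Dsp Xw$, a truncation triangle $\cF' \to \cF \to \cF'' \to$ with $\cF' \in \pq\Dll X0w$ and $\cF'' \in \pq\Dgtg X0w$. I would prove this by noetherian induction on the number of $G$-orbits in $X$, mimicking Bezrukavnikov's construction of the perverse coherent $t$-structure in~\cite{bez:pc}. In the base case of a single orbit $C$, the defining condition $Li_C^*\cF|_C \in \q\Dmll C{n+p(C)}w$ reduces to $\cF \in \q\Dll X{n+p(C)}w$, so the pure-perverse $t$-structure is a shift of the purified standard $t$-structure and truncations are furnished by Proposition~\ref{prop:pure-t}. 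For the inductive step, pick an open orbit $C$ with open inclusion $j: U \hto X$ and complementary closed inclusion $i: Z \hto X$; truncations exist on $\q\Dsp Zw$ by induction and on $\q\Dsp Uw$ either by the base case or by restricting further. I would then glue along the lines of~\cite[\S 1.4.10]{bbd}: form $i_*Ri^!\cF$, apply the inductive truncation on $Z$, combine this with the truncation of $j^*\cF$ on $U$ using the universal properties of the pieces, and assemble the result into a candidate triangle on $X$. The compatibility of $j^*$, $Li^*$, $Ri^!$, and $i_*$ with the pure-perverse categories from Lemma~\ref{lem:pb-purep-exact} ensures the pieces land in the correct subcategories, and Lemma~\ref{lem:purep-hom} provides the Hom-vanishing required for the assembly to commute. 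A subtle technical point is that $Rj_*$ need not preserve boundedness in the coherent setting, so the attaching triangle must be handled via the direct-limit formalism already used in the proof of Lemma~\ref{lem:pb-purep-exact} rather than the naive recollement six-functor package.

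Once truncation triangles exist, the remaining axioms are formal. Nondegeneracy is inherited orbit-wise from the purified standard $t$-structures: any $\cF \in \bigcap_n \pq\Dgp Xnw$ satisfies $Li_C^*\cF|_C \in \bigcap_n \q\Dgp C{n+p(C)}w = 0$ for every $C$, forcing $\cF = 0$ by d\'evissage on $\orb X$. Boundedness follows from the boundedness of the standard $t$-structure on $\Db X$ together with the finiteness of $\orb X$ and the fact that $p$ takes only finitely many values.
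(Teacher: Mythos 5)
The nesting and Hom-vanishing axioms are handled as in the paper (Lemma~\ref{lem:purep-hom} is indeed the orthogonality statement), and your sandwich-free arguments for nondegeneracy and boundedness could be made to work, though the paper simply observes $\q\Dlp Xdw \subset \pq\Dlp X0w \subset \q\Dlp Xew$ for $d,e$ the extreme values of $p$ and inherits both properties from Proposition~\ref{prop:pure-t}. The genuine gap is in the construction of the truncation triangle. The recollement recipe you describe --- truncate $j^*\cF$ on $U$, truncate $Ri^!\cF$ on $Z$, and ``assemble'' --- requires an extension mechanism ($j_!$ or $Rj_*$) to lift the open-part truncation back to $X$, and neither functor preserves bounded coherent complexes. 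The ``direct-limit formalism'' you invoke as a fix is a tool for \emph{proving Hom-vanishing} (it produces the exact sequence $\varinjlim\Hom(Li^*_{Z'}\cF,Ri^!_{Z'}\cG)\to\Hom(\cF,\cG)\to\Hom(\cF|_U,\cG|_U)$); it does not construct any object of $\Db X$, so it cannot substitute for the missing gluing. As written, the inductive step produces no candidate for $\cF'$.

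What Bezrukavnikov actually does, and what the paper reproduces, is a two-step global truncation that sidesteps gluing entirely. Choose an open orbit $C$ on which $p$ attains its \emph{minimum} (monotonicity guarantees this happens on an open orbit) and set $\cF_1 = \q\Tlp{p(C)}w\cF$, the purified standard truncation taken on all of $X$; monotonicity of $p$ together with Lemma~\ref{lem:pb-pure-exact} forces $\cF_1 \in \pq\Dlp X0w$ globally. For the cone $\cG_1$, one checks that $\D\cG_1|_U$ already lies in $\cheq\Dltp U{\dualp(C)}{-w}$, so applying the purified truncation $\cheq\Tltp{\dualp(C)}{-w}$ to $\D\cG_1$ and dualizing back yields $\cF_2 \in \pq\Dgtp X0w$ (here comonotonicity of $p$ is used) with the leftover piece $\cG$ supported on $X \ssm U$. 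This gives $\cF \in \{\cF_1\}*\{\cG\}*\{\cF_2\}$, and noetherian induction applied to $\cG$, together with the $*$-associativity argument of~\cite[Lemme~1.3.10]{bbd} and Lemma~\ref{lem:purep-hom}, produces the required triangle. You should replace the gluing step of your inductive argument with this ``truncate, dualize, truncate'' device; without it the proof does not go through.
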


\begin{defn}
The $t$-structure of Proposition~\ref{prop:purep-t} is called the \emph{pure-perverse $t$-structure}.  Its truncation functors are denoted
\[
\pq\Tlp nw: \q\Dsp Xw \to \pq\Dlp Xnw
\qquad\text{and}\qquad
\pq\Tgp nw: \q\Dsp Xw \to \pq\Dgp Xnw,
\]
and its heart, denoted $\pq\Pp Xw$, is called the category of \emph{pure-perverse coherent sheaves}.
\end{defn}

\begin{proof}
In view of Lemma~\ref{lem:purep-hom}, to show that these categories form a
$t$-structure, it remains only to show that for any $\cF \in \q\Dsp Xw$,
there is a distinguished triangle $\cF' \to \cF \to \cF'' \to$ with $\cF'
\in \pq\Dlp X0w$ and $\cF'' \in \pq\Dgtp X0w$.  Our argument closely
follows the proof of~\cite[Theorem~1]{bez:pc}.  Choose an open orbit $C
\in \orb X$ on which $p$ achieves its minimum value, and let $U \subset X$
be the corresponding open subscheme.  (The monotonicity of $p$ guarantees
that its minimum value is achieved on an open orbit.) Let $\cF_1 = \q\Tlp
{p(C)}w \cF$.  By Lemma~\ref{lem:pb-pure-exact} and the monotonicity of
$p$, we have that $\cF_1 \in \pq\Dlp X0w$.  Form the distinguished triangle
\[
\cF_1 \to \cF \to \cG_1 \to,
\]
where $\cG_1 = \q\Tgtp {p(C)}w\cF$.  
It is clear that $\D\cF \in \cheq\Dsl X{-w}$, and it follows
from~\cite[Lemmas~6.1 and~6.6]{a} that $\D\cF|_U \in \Dl X{\cod C-n}$, so that we have
$\D(\cG_1)|_U \in \cheq\Dltp X{\dualp(C)}{-w}$.   Therefore, in the
distinguished triangle
\[
\cheq\Tltp {\dualp(C)}{-w}(\D\cG_1) \to \D\cG_1 \to \cheq\Tgp {\dualp(C)}{-w}
\D\cG_1 \to,
\]
the support of the last term is contained in the complement of $U$.  Let
$\cG = \D(\cheq\Tgp {\dualp(C)}{-w} \D\cG_1)$ and $\cF_2 = \D(\cheq\Tltp
{\dualp(C)}{-w}(\D\cG_1))$.  Since $\dualp$ is monotone, $\cheq\Tltp
{\dualp(C)}{-w}(\D\cG_1) \in \bpcq\Dltp X0{-w}$, and therefore $\cF_2 \in
\pq\Dgtp X0w$.  We now have
\[
\cF \in \{\cF_1\} * \{\cG\} * \{\cF_2\},
\]
with $\cF_1 \in \pq\Dlp X0w$, $\cF_2 \in \pq\Dgtp X0w$, and $\cG$ supported
on a proper closed subscheme.  It follows by noetherian induction that
$(\pq\Dlp X0w, \pq\Dgp X0w)$ is a $t$-structure.  (See the proof
of~\cite[Theorem~1]{bez:pc} for more details on this argument.)

Next, let $d$ be the minimum value of $p$ on $X$, and let $e$ be its
maximum value.  We then have
\[
\q\Dlp Xdw \subset \pq\Dlp X0w \subset \q\Dlp Xew.
\]
Then the nondegeneracy and boundedness of the purified standard
$t$-structure imply that no nonzero object belongs to all $\pq\Dlp Xnw$,
and every object belongs to some $\pq\Dlp Xnw$.  By duality, corresponding
statements hold for $\pq\Dgp Xnw$ as well, so the $t$-structure $(\pq\Dlp
X0w, \pq\Dgp X0w)$ is nondegenerate and bounded.
\end{proof}

Under suitable conditions on the perversity function, it is possible to
define an ``intermediate-extension'' functor for pure-perverse coherent
sheaves, following the pattern of~\cite[Theorem~2]{bez:pc}.  Simple objects
in this category arise in this way, {\it cf.}~\cite[Corollary 4]{bez:pc}. 
In the next section (see Proposition~\ref{prop:purep-ic}), we will carry
out a slight generalization of this construction.

The remainder of the section is devoted to establishing a relationship
between pure-perverse coherent sheaves and staggered sheaves.

\begin{lem}\label{lem:purep-stag}
Suppose $\cF \in \pq\Dpgg X0w$.  Let $r: \orb X \to \Z$ be the function
$r(C) = p(C) + \lceil \frac{q(C) + w}{2} \rceil$.  Then $\cF \in \ru\Dpg
X0$.
\end{lem}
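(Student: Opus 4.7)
The plan is noetherian induction on $X$, following the strategy of Lemma~\ref{lem:purep-hom}. To show $\cF \in \ru\Dpg X0$ one must verify $\Hom(\cG, \cF) = 0$ for every $\cG \in \ru\Dmlt X0$. Pick an open orbit $C_0 \subset X$, let $j \colon U = C_0 \hto X$ be the open inclusion, and let $Z$ be the complementary reduced closed subscheme. The exact sequence
\[
\lim_{\substack{\to \\ Z'}} \Hom(Li^*_{Z'}\cG, Ri^!_{Z'}\cF) \to \Hom(\cG, \cF) \to \Hom(\cG|_U, \cF|_U),
\]
with $Z'$ ranging over all closed subscheme structures on $Z$ (as in the proof of Lemma~\ref{lem:purep-hom}), splits the problem. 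By Lemma~\ref{lem:pb-purep-exact}(3) one has $Ri^!_{Z'}\cF \in \pq\Dpgg{Z'}0w$, which lies in $\ru\Dpg{Z'}0$ by the inductive hypothesis, while $Li^*_{Z'}\cG \in \ru\Dmlt{Z'}0$ by the right $t$-exactness of $Li^*_{Z'}$ for the $r$-staggered $t$-structure (a standard consequence of Lemma~\ref{lem:baric-res}(2)). Thus the leftmost term vanishes and the problem reduces to the case $X = C$, a single orbit.

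On a single orbit, Lemma~\ref{lem:pb-purep-exact} simplifies the hypothesis on $\cF$ to $\cF \in \Dpg C{p(C)} \cap \q\Dpsg Cw$. Apply the standard truncation triangle $\Tlt{p(C)}\cG \to \cG \to \Tg{p(C)}\cG \to$: since $\Tlt{p(C)}\cG$ is concentrated in cohomological degrees $\le p(C)-1$ and $\cF$ sits in degrees $\ge p(C)$, one has $\Hom(\Tlt{p(C)}\cG, \cF) = 0$, so it suffices to show $\Hom(\Tg{p(C)}\cG, \cF) = 0$. Because $\cF \in \q\Dpsg Cw$, this will follow once $\Tg{p(C)}\cG \in \q\Dmsl C{w-1}$ is established. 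On the single orbit, $\cG \in \ru\Dmlt C0$ unwinds to $h^k(\cG) \in \cgl C{r(C) - k - 1}$ for all $k$, so $h^k(\Tg{p(C)}\cG) = h^k(\cG) \in \cgl C{r(C) - k - 1} \subset \cgl C{r(C) - p(C) - 1}$ for $k \ge p(C)$, while the other cohomologies vanish.

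The final step is a parity check: the choice $r(C) = p(C) + \lceil (q(C) + w)/2 \rceil$ makes $r(C) - p(C) - 1$ exactly equal to $\lfloor (w - 1 + q(C))/2 \rfloor$, which on a single orbit is precisely the defining bound for $\q\cgl C{w-1}$; this is verified by separate computations according to whether $q(C) + w$ is even or odd. The main obstacle is this parity bookkeeping, since the ceiling in the definition of $r$ is engineered precisely so that both parity cases yield equality (rather than an off-by-one failure); the noetherian reduction itself is mechanical given Lemma~\ref{lem:pb-purep-exact} and the standard $t$-exactness properties of the functors involved.
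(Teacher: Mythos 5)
Your proof is correct and follows essentially the same route as the paper: noetherian induction via the standard $\lim \Hom(Li^*_{Z'}\cG, Ri^!_{Z'}\cF) \to \Hom(\cG,\cF) \to \Hom(\cG|_U,\cF|_U)$ sequence, with the open-orbit case settled by splitting $\cG$ according to whether its cohomological degree is below $p(C)$ (killed by $\cF \in \Dpg U{p(C)}$) or not (killed because the relevant piece lands in $\q\Dmsl U{w-1}$ via the same ceiling/floor identity $\lceil (q(C)+w)/2\rceil - 1 = \lfloor (q(C)+w-1)/2\rfloor$). The only cosmetic difference is that you truncate $\cG$ at degree $p(C)$ where the paper first reduces $\cG$ to a single cohomological degree and then treats the two cases.
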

This statement can be thought of as saying that under a suitable change of
coordinates, we have
\[
\,\incgr{pdgg}\, \subset\, \incgr{qdg}\,.
\]
The ``change of coordinates'' is the change in the notion of baric degree
between the two pictures: the left-hand picture shows baric degree with
respect to $q$, and the right-hand picture shows baric degree with respect
to $2r \approx 2p + q + w$.

\begin{proof}
It suffices to show that $\Hom(\cG,\cF) = 0$ for all $\cG \in \ru\Dl
X{-1}$.  By induction on the number of nonzero cohomology sheaves of $\cG$,
we may assume without loss of generality that $\cG$ is concentrated in a
single degree: suppose $\cG \cong \cG'[n+1]$ for some sheaf $\cG' \in
\trl\cgl X{2n}$.

Choose an open orbit $C \in \orb X$, and let $U \subset X$ be the
corresponding open subscheme.  Then $\cG'|_U \in \cgl U{r(C)+n}$.  By
Lemma~\ref{lem:pb-purep-exact}, we have $\cF|_U \in \pq\Dpgg U0w =
\q\Dpgg U{p(C)}w$.  $\cG$ is concentrated in degree $-n-1$, so if $-n-1 <
p(C)$, we clearly have $\Hom(\cG|_U, \cF|_U) = 0$.  Now, assume $-n-1 \ge
p(C)$. It follows that
\[
r(C) + n = p(C) + \left\lceil \frac{q(C)+w}{2}\right\rceil + n 
\le \left\lceil \frac{q(C)+w}{2}\right\rceil - 1
= \left\lfloor \frac{q(C) + w - 1}{2}\right\rfloor.
\]
It follows that $\cG'|_U \in \cgl U{\lfloor(q(C)+w-1)/2\rfloor} = \q\cgl
U{w-1}$.  Thus, in this case, $\cG|_U \in \q\Dsl U{w-1}$, and we see once
again that $\Hom(\cG|_U, \cF|_U) = 0$.  The result then follows by
noetherian induction from the exact sequence
\[
\lim_{\substack{\to \\ Z'}} \Hom(Li^*_{Z'}\cG, Ri^!_{Z'}\cF) \to
\Hom(\cG,\cF) \to \Hom(\cF|_U,\cG|_U). \qedhere
\]
\end{proof}

\begin{prop}\label{prop:purep-stag}
Let $r: \orb X \to \Z$ be such that $p(C) + \lfloor \frac{q(C) + w}{2}
\rfloor \le r(C) \le p(C) + \lceil \frac{q(C) + w}{2} \rceil$.  Then
$\pq\Pp Xw \subset \ru\cM(X)$.
\end{prop}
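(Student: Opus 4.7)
The plan is to establish the two containments $\pq\Pp Xw \subseteq \ru\Dg X0$ and $\pq\Pp Xw \subseteq \ru\Dl X0$ separately: the first by invoking Lemma~\ref{lem:purep-stag} directly, and the second by passing through Serre--Grothendieck duality.

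For the $\ge$ half, given $\cF \in \pq\Pp Xw \subseteq \pq\Dpgg X0w$, Lemma~\ref{lem:purep-stag} yields $\cF \in {}^{r_+}\!\Dpg X0$ for the extremal staggered perversity $r_+(C) = p(C) + \lceil (q(C)+w)/2\rceil$. The upper bound in the hypothesis is precisely $r \le r_+$ pointwise. I would then observe that the assignment $q \mapsto \qu\Dpg Xn$ is pointwise antitone: directly from the definition, $\tql\cgl Xw$ grows with $q$, so $\qu\Dml Xn$ grows with $q$, and by the adjoint definition $\qu\Dpg Xn$ shrinks with $q$. Consequently ${}^{r_+}\!\Dpg X0 \subseteq \ru\Dpg X0$, and intersecting with $\Db X$ yields $\cF \in \ru\Dg X0$.

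For the $\le$ half, I would exploit the self-dual construction of the pure-perverse $t$-structure. The definition $\pq\Dpgg Xnw = \D(\bpcq\Dmll X{-n}{-w})$, combined with duality of the baric structure, implies $\D(\pq\Pp Xw) = \bpcq\Pp X{-w}$, so $\D\cF$ is pure-perverse with respect to $(\dualp, \dualq, -w)$. By $\thmstagdual$, $\cF \in \ru\Dl X0$ is equivalent to $\D\cF \in \bru\Dg X0$, where $\bar r(C) = \scod C - r(C)$. Applying the $\ge$ half to $\D\cF$ with $(p,q,w,r)$ replaced by $(\dualp,\dualq,-w,\bar r)$ reduces the task to verifying
\[
\bar r(C) \le \dualp(C) + \left\lceil\frac{\dualq(C) - w}{2}\right\rceil.
\]
Using $\dualp(C) = \cod C - p(C)$, $\dualq(C) = 2\alt C - q(C)$, $\bar r(C) = \scod C - r(C)$, and the identity $\lceil -x\rceil = -\lfloor x\rfloor$, a short rearrangement shows this is exactly the lower bound $r(C) \ge p(C) + \lfloor (q(C)+w)/2\rfloor$ from the hypothesis.

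The main obstacle is essentially only bookkeeping: checking that the upper and lower bounds on $r$ in the hypothesis are interchanged by the duality substitution $(p,q,w,r) \mapsto (\dualp,\dualq,-w,\bar r)$, and that $\pq\Pp Xw$ is self-dual under $\D$ in the stated sense. All the genuine analytic content has already been packaged inside Lemma~\ref{lem:purep-stag}.
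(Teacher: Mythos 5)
Your proposal is correct and follows essentially the same route as the paper: apply Lemma~\ref{lem:purep-stag} to $\cF$ to land in ${}^{\sst r_1}\Dg X0$ with $r_1(C) = p(C) + \lceil(q(C)+w)/2\rceil$, then apply it to $\D\cF \in \bpcq\Pp X{-w}$ and dualize via the staggered duality theorem to land in ${}^{\sst r_3}\Dl X0$ with $r_3(C) = p(C) + \lfloor(q(C)+w)/2\rfloor$, the arithmetic matching exactly. The only cosmetic difference is that you make explicit the antitone dependence of $\ru\Dg X0$ on $r$, which the paper uses implicitly in its final sentence.
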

\begin{proof}
Suppose $\cF \in \pq\Pp Xw$.  Let $r_1(C) = p(C) + \lceil \frac{q(C) +
w}{2} \rceil$.  The preceding lemma tells us that $\cF \in {}^{\sst r_1}\Dg
X0$. On the other hand, $\D\cF \in \bpcq\Pp X{-w}$, and invoking the
preceding lemma again tells us that $\D\cF \in {}^{\sst r_2}\Dg X0$, where
\begin{multline*}
r_2(C) = \dualp(C) + \left\lceil \frac{\dualq(C) - w}{2} \right\rceil
= \cod C - p(C) + \left\lceil \alt C - \frac{q(C) + w}{2}
\right\rceil \\
= \scod C - \left(p(C) + \left\lfloor \frac{q(C) + w}{2}
\right\rfloor\right).
\end{multline*}
By duality, we have $\cF \in {}^{\sst r_3}\Dl X0$, where $r_3(C) = p(C) +
\lfloor \frac{q(C) + w}{2} \rfloor$.  Thus, for any $r: \orb X \to \Z$ with
$r_3(C) \le r(C) \le r_1(C)$, we have $\cF \in \ru\cM(X)$.
\end{proof}

%%%%%%%%%%%%%%%%%%%%%%%%%%%%%%%%%%%%%%%%%%%%%%%%%%%%%%%%%%%%%%%%%%%%%%%%%%%
\section{Intermediate-Extension Functors}
\label{sect:ic}
%%%%%%%%%%%%%%%%%%%%%%%%%%%%%%%%%%%%%%%%%%%%%%%%%%%%%%%%%%%%%%%%%%%%%%%%%%%

In the previous section, we proved that every pure-perverse coherent sheaf
is a staggered sheaf with respect to a suitable staggered perversity.  In
this section, we will prove a kind of converse to this: we will show that
every simple staggered sheaf is pure-perverse with respect to suitable
Deligne--Bezrukavnikov and baric perversities.

Fix an orbit $C_0$, and let $j: C_0 \hto \overline C_0$ denote the
inclusion.  We define a staggered perversity $\ufl r: \orb X \to \Z$ by
\[
\ufl r(C) =
\begin{cases}
r(C) - 1 & \text{if $\overline C \subsetneq \overline C_0$,} \\
r(C) & \text{otherwise.}
\end{cases}
\]

Next, we define an open subscheme $\tilde C_0 \subset \overline C_0$ by
\[
\tilde C_0 = \overline C_0 \ssm \bigcup_{\{C \subset \overline C_0 \mid
\cod C - \cod C_0 \ge 2\}} \overline C.
\]
Let $p: \orb{\overline C_0} \to \Z$ be a Deligne--Bezrukavnikov perversity such that
\begin{equation}\label{eqn:pp-strict}
0 < p(C) - p(C_0) < \cod C - \cod C_0.
\qquad\text{for all $C \subset \overline C_0 \ssm \tilde C_0$.}
\end{equation}
Define two functions $\ufl p, \ush p: \orb{\overline C} \to \Z$ as
follows:
\[
\ufl p(C) =
\begin{cases}
p(C_0) & \text{if $C \subset \tilde C_0$,} \\
p(C) - 1 & \text{if $C \subset \overline C_0 \ssm \tilde C_0$,}
\end{cases}
\qquad
\ush p(C) =
\begin{cases}
p(C_0) & \text{if $C = C_0$,} \\
p(C_0) + 1 & \text{if $C \subset \tilde C_0 \ssm C_0$,} \\
p(C) + 1 & \text{if $C \subset \overline C_0 \ssm \tilde C_0$.}
\end{cases}
\]
It is easy to verify that $\ufl p$ and $\ush p$ are themselves monotone and comonotone Deligne--Bezrukavnikov perversities, so they give rise to additional pure-perverse $t$-structures on $\q\Dsp Xw$.  Note also that $\ufl p(C) \le p(C) \le \ush p(C)$ for all $C \subset \overline
C$.  (For $C \subset \tilde C_0 \ssm C_0$, this follows from the fact that
$0 \le p(C) - p(C_0) \le \cod C - \cod C_0 = 1$.)  Therefore, for any baric
perversity $q$, we have $\flatpq\Dlp X0w \subset \pq\Dlp X0w$ and
$\sharppq\Dgp X0w \subset \pq\Dgp X0w$.  Define full subcategories of
$\pq\cP(\tilde C_0)$ and of $\pq\cP(\overline C_0)$ as follows:
\begin{align*}
\pq\Pnp {\tilde C_0}w &= \flatpq\Dlp {\tilde C_0}0w \cap \sharppq\Dgp
{\tilde C_0}0w \\
\pq\Pnp {\overline C_0}w &= \flatpq\Dlp {\overline C_0}0w \cap \sharppq\Dgp
{\overline C_0}0w
\end{align*}

\begin{lem}\label{lem:cod1-pure}
Let $\cL$ be a sheaf in $\cg {C_0}$ that is $s$-pure of step $v \in \Z$. 
Define a Deligne--Bezrukavnikov perversity $p: \orb{\tilde C_0} \to \Z$
and a baric perversity $q: \orb{\tilde C_0} \to \Z$ by
\[
p(C) = r(C_0) - v
\qquad\text{and}\qquad
q(C) = \alt C_0 + 2\ufl r(C) - 2r(C_0).
\]
Let $w = 2v - \alt C_0$.  Then $\ru j_{!*}\cL[v-r(C_0)]|_{\tilde C_0} \in
\pq\Pnp {\tilde C_0}w$.
\end{lem}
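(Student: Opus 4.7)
The plan is to verify directly the three conditions defining $\pq\Pnp{\tilde C_0}w = \flatpq\Dlp{\tilde C_0}0w \cap \sharppq\Dgp{\tilde C_0}0w$ for $\cF := \ru j_{!*}\cL[v - r(C_0)]|_{\tilde C_0}$: baric purity of degree $w$ with respect to $q$, the $\ufl p$-support bound, and the dual $\ush p$-support bound. Because $\tilde C_0$ contains only two kinds of orbits, namely $C_0$ itself and the codimension-one orbits $C$ with $\cod C - \cod C_0 = 1$, the verification reduces to orbit-by-orbit checks; baric purity then comes as a bonus once the $\ufl p$- and $\ush p$-conditions are in place, since $\flatpq\Dll{}0w \subset \q\Dmsl{}w$ and dually.

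At $C_0$, plugging $q(C_0) = \alt C_0$ and $w = 2v - \alt C_0$ into~\eqref{eqn:qcgl} gives $\q\cgl{C_0}w = \cgl{C_0}v$, so $s$-purity of $\cL$ of step $v$ is exactly baric purity of degree $w$ at $C_0$. Moreover, $\cL[v - r(C_0)]$ is concentrated in cohomological degree $r(C_0) - v = p(C_0) = \ufl p(C_0) = \ush p(C_0)$, which trivializes both the $\ufl$- and $\ush$-conditions at $C_0$.

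At a codimension-one orbit $C \subset \tilde C_0 \ssm C_0$, the characterizing property of $\ru j_{!*}$ gives $Li_C^*\cF \in \ru\Dml{\overline C}{-1}$ and $Ri_C^!\cF \in \ru\Dpg{\overline C}{1}$. Restricting to $C$ and using $r(C) = \ufl r(C) + 1$, the cohomology sheaves $h^k$ of $Li_C^*\cF|_C$ lie in $\cgl C{\ufl r(C) - k}$. A direct calculation with $q(C) = \alt C_0 + 2\ufl r(C) - 2r(C_0)$ gives $\q\cgl Cw = \cgl C{v + \ufl r(C) - r(C_0)}$, so $h^k \in \q\cgl Cw$ for $k \ge p(C_0)$, and $h^k \in \q\cgl C{w-1}$ for $k \ge p(C_0) + 1$. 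The standard truncation at $p(C_0)$ then furnishes a distinguished triangle witnessing $Li_C^*\cF|_C \in \q\Dmll C{p(C_0)}w = \q\Dmll C{\ufl p(C)}w$. The $\ush$-condition is the dual statement for $Ri_C^!\cF|_C$, with the threshold shifted by $+1$ to match the $+1$ shift of $\ush p - p$ on $\tilde C_0 \ssm C_0$; this follows either by a symmetric computation starting from $Ri_C^!\cF \in \ru\Dpg{\overline C}{1}$, or by recognizing $\D\cF \cong \bru j_{!*}(\D\cL[r(C_0) - v])|_{\tilde C_0}$ and reducing to the $\ufl$-side for the dualized data.

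The principal difficulty is the bookkeeping: three perversities ($r$, $\ufl r$, and $p$) and two indices ($q(C)$ and $w$) must be aligned so that the floor in~\eqref{eqn:qcgl} places the threshold between $\q\cgl Cw$ and $\q\cgl C{w-1}$ at precisely $k = p(C_0)$, matching $\ufl p$ on the nose, with the analogous dual alignment for $\ush p$. The specific formulas for $p$, $q$ and $w$ in the statement are exactly what make the $-1$ of the staggered $\ml$-bound and the $+1$ of the staggered $\pg$-bound dovetail with the $+1$ offsets $\ush p - p$ and $p - \ufl p$ on codimension-one orbits.
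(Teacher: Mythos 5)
Your strategy---verifying the defining conditions of $\pq\Pnp{\tilde C_0}w$ orbit by orbit, starting from the characterizing property of the intermediate extension at the codimension-one orbits---differs from the paper's, which first establishes two \emph{global} facts about $\cF = \ru j_{!*}(\cL[v-r(C_0)])|_{\tilde C_0}$, namely $\cF \in \Dg {\tilde C_0}{r(C_0)-v}$ and $\cF \in \q\Dsl{\tilde C_0}w$, and only then passes to orbits via Lemma~\ref{lem:pb-pure-exact}. Your threshold computations at $C_0$ and at the orbits $C \subset \tilde C_0 \ssm C_0$ are correct as far as they go, as is the observation that baric purity follows once the $\ufl p$- and $\ush p$-conditions are established.

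There is, however, a genuine gap at the codimension-one orbits. The condition you must verify there is $Li_C^*\cF|_C \in \q\Dmll C{p(C_0)}w = (\Dml C{p(C_0)} * \q\Dmslt Cw) \cap \q\Dmsl Cw$, and the intersection with $\q\Dmsl Cw$ requires \emph{every} cohomology sheaf $h^k(Li_C^*\cF|_C)$, in every degree $k$, to lie in $\q\cgl Cw = \cgl C{v+\ufl r(C)-r(C_0)}$. The bound you extract from $Li_C^*\cF \in \ru\Dml{\overline C}{-1}$, namely $h^k \in \cgl C{\ufl r(C)-k}$, yields this only for $k \ge p(C_0)$; for $k < p(C_0)$ it is strictly weaker than what is required, and since $Li_C^*$ is only right $t$-exact, cohomology in those degrees cannot be ruled out a priori. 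Your truncation triangle at $p(C_0)$ supplies the $*$-decomposition but not this membership in $\q\Dmsl Cw$ (and the same issue recurs on the dual, $\ush p$-side). The paper closes exactly this hole before restricting to orbits: it shows $\Tlt{r(C_0)-v}\cF = 0$, because that truncation lies in $\flatru\Dl{\tilde C_0}0$ and is supported on $\tilde C_0 \ssm C_0$, hence lies in $\ru\Dl{\tilde C_0}{-1}$ and admits no nonzero map to $\cF \in \ru\cM(\tilde C_0)$; with $h^k(\cF)=0$ for $k < r(C_0)-v$ in hand, the global bound $\cF \in \q\Dsl{\tilde C_0}w$ follows, and Lemma~\ref{lem:pb-pure-exact} (whose proof rests on $Li^*$ preserving the categories of Lemma~\ref{lem:baric-res}) then controls whatever low-degree cohomology $Li_C^*$ creates. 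To repair your argument you would need to import this vanishing step, at which point you are essentially running the paper's proof.
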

\begin{proof}
Let $\cF = \ru j_{!*}(\cL[v-r(C_0)])|_{\tilde C_0}$.  We know that $\cF \in
\flatru\Dl {\tilde C_0}0$, so $\Tlt {r(C_0)-v}\cF$ belongs to $\flatru\Dl
{\tilde C_0}0$ as well.  Since $\cF|_{C_0} \cong \cL[v-r(C_0)]$, we see
that $\Tlt {r(C_0)-v}\cF$ is supported on $\tilde C_0 \ssm C_0$, so in fact
$\Tlt {r(C_0)-v}\cF \in \ru\Dl {\tilde C_0}{-1}$.  But there can be no
nonzero morphism from an object of $\ru\Dl {\tilde C_0}{-1}$ to one in
$\ru\cM(\tilde C_0)$, so $\Tlt {r(C_0)-v}\cF = 0$, and $\cF \in \Dg {\tilde
C_0}{r(C_0)-v}$.

Next, we have
\[
h^k(\cF) \in {}_{\sst 2\flat r}\cgl {\tilde C_0}{-2k} = \q\cgl {\tilde
C_0}{-2k + 2r(C_0) - \alt C_0}.
\]
We have just seen that $h^k(\cF) = 0$ for $k < r(C_0) - v$.  When $k \ge
r(C_0) - v$, we have
\[
-2k + 2r(C_0) - \alt C_0 \le 2v - \alt C_0 = w,
\]
and the inequality is strict when $k > r(C_0) - v$.  Thus, $\cF \in \q\Dsl
{\tilde C_0}w$, and $\Tgt {r(C_0)-v}\cF \in \q\Dslt {\tilde C_0}w$.  The
distinguished triangle
\[
\Tl {r(C_0)-v}\cF \to \cF \to \Tgt {r(C_0)-v}\cF \to
\]
then shows that $\cF \in \q\Dll {\tilde C_0}{r(C_0)-v}w = \flatpq\Dll
{\tilde C_0}0w$.

It remains to show that $\cF \in \sharppq\Dgg {\tilde C_0}0w$.  
Let $\cG = \D\cF$.  Then $\cG$ also arises as an intermediate-extension. 
Specifically, let $\cL' = (\D\cL)[\cod C_0]$; then $\cL'$ is a sheaf in
$\cg {C_0}$ that is pure of step $v' = \alt C_0 - v$.  We have $\cG = \bru
j_{!*}(\cL'[v' - \dualr(C_0)])|_{\tilde C_0}$.  By the arguments above, we
know that $\cG \in {}_{\sst \qprime}\Dll {\tilde C_0}{\dualr(C_0)-v'}{w'}$,
where
\[
\qprime(C) = \alt C_0 + 2\ufl\dualr(C) - 2\dualr(C_0)
\qquad\text{and}\qquad
w' = 2v' - \alt C_0.
\]
Observe that
\[
w' =  2(\alt C_0 - v) - \alt C_0 = \alt C_0 - 2v = -w.
\]
Next, note that $\cod C - \cod C_0 = \ush p(C) - \ufl p(C)$ for all $C
\subset \tilde C_0$, so
\begin{align*}
\ush p(C) &= \cod C - \cod C_0 + \ufl p(C) \\
&=\cod C - \cod C_0 + (r(C_0) - v) \\
&=\cod C - \cod C_0 + (\alt C_0 + \cod C_0 - \dualr(C_0) - (\alt C_0 - v'))
\\
&=\cod C - (\dualr(C_0) - v').
\end{align*}
It follows that $\D({}_{\sst \qprime}\Dll {\tilde C_0}{\dualr(C_0)-v'}{-w})
= {}^{\sst \ush p}_{\sst \dualqprime}\Dgg {\tilde C_0}0w$.  
From the formula
\[
\dualqprime(C) = 2\alt C - (\alt C_0 + 2\ufl\dualr(C) -
2\dualr(C_0)),
\]
we see that $\dualqprime(C_0) = \alt C_0 = q(C_0)$, and that for $C
\subset \tilde C_0 \ssm C_0$, we have
\begin{align*}
\dualqprime(C) 
&= 2\alt C - \alt C_0 - 2(\scod C - r(C) - 1) + 2(\scod C_0 - r(C_0)) \\
&= \alt C_0 - 2(\cod C - \cod C_0) + 2 + 2r(C) - 2r(C_0) > q(C).
\end{align*}
Thus, $\dualqprime(C) \ge q(C)$ for all $C$, so $\cF \cong \D\cG \in
\sharppq\Dgg {\tilde C_0}0w$, as desired.
\end{proof}

\begin{prop}\label{prop:purep-ic}
Let $\tj: \tilde C_0 \hto \overline C_0$ denote the inclusion.  Assume that $p: \orb X \to \Z$ satisfies condition~\eqref{eqn:pp-strict}.  Then
$\tj^*$ induces an equivalence of categories $\pq\Pnp {\overline C_0}w \to
\pq\Pnp {\tilde C_0}w$.
\end{prop}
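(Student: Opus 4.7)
The plan is to mimic Bezrukavnikov's construction of the perverse-coherent $\cIC$ functor from~\cite[Theorem~2]{bez:pc}, adapted to the two-perversity heart $\pq\Pnp{\cdot}w = \flatpq\Dlp{\cdot}0w \cap \sharppq\Dgp{\cdot}0w$. The crucial geometric input is that every orbit $C$ in $Z := \overline C_0 \ssm \tilde C_0$ has $\cod C - \cod C_0 \ge 2$, so that $\ush p(C) - \ufl p(C) = 2$ with both inequalities $\ufl p(C) < p(C) < \ush p(C)$ strict by~\eqref{eqn:pp-strict}. This ``gap'' on $Z$ is exactly what an intermediate-extension statement requires.

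For \emph{full faithfulness}, let $i: Z \hto \overline C_0$ be the complementary closed embedding. For $\cF, \cG \in \pq\Pnp{\overline C_0}w$, apply $\Hom(\cF,-)$ to the attaching triangle $i_*Ri^!\cG \to \cG \to R\tj_*\tj^*\cG \to$ (handled via the limit-over-subscheme-structures formalism used in Lemmas~\ref{lem:pb-purep-exact} and~\ref{lem:purep-hom}). The induced map $\Hom(\cF,\cG) \to \Hom(\tj^*\cF,\tj^*\cG)$ is an isomorphism once $\Hom(\cF, i_*Ri^!\cG[k]) = 0$ for $k = 0, 1$, and the $(Li^*, i_*)$ adjunction turns this into $\Hom(Li^*\cF, Ri^!\cG[k]) = 0$. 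By Lemma~\ref{lem:pb-purep-exact}, $Li^*\cF \in \flatpq\Dll Z0w$ and $Ri^!\cG \in \sharppq\Dgg Z0w$. Since $\ufl p(C) = \ush p(C) - 2$ on every orbit of $Z$, one has $\flatpq\Dll Z0w = \sharppq\Dll Z{-2}w$; and for $k \in \{0,1\}$ the object $Ri^!\cG[k]$ lies in $\sharppq\Dpgtg Z{-2}w$. Lemma~\ref{lem:purep-hom} applied with perversity $\ush p$ and index $-2$ then yields the vanishing.

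For \emph{essential surjectivity}, given $\cF \in \pq\Pnp{\tilde C_0}w$ I would construct an extension $\tilde\cF \in \pq\Pnp{\overline C_0}w$ by noetherian induction on orbits in $Z$. At each stage one has an extension $\cF_U$ on an open $U \supsetneq \tilde C_0$ and a closed orbit $C_1 \subset \overline C_0 \ssm U$; writing $j_1: U \hto U \cup C_1$, one builds the extension across $C_1$ as an appropriate double truncation of $R(j_1)_*\cF_U$, or equivalently as the image of the canonical map $(j_1)_!\cF_U \to R(j_1)_*\cF_U$ in the relevant pure-perverse heart. The strict inequalities $\ufl p(C_1) < p(C_1) < \ush p(C_1)$ force the extension to satisfy $Li_{C_1}^*\tilde\cF \in \q\Dll{C_1}{\ufl p(C_1)}w$ and (by a dual argument) $Ri_{C_1}^!\tilde\cF \in \q\Dgg{C_1}{\ush p(C_1)}w$ along $C_1$, which is exactly what membership in $\pq\Pnp{U \cup C_1}w$ requires. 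Uniqueness at each inductive step is a case of the full faithfulness already established.

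The main obstacle is this last step: an object of $\pq\Pnp{\overline C_0}w$ must simultaneously satisfy a strict $\flatpq$-upper bound and a strict $\sharppq$-lower bound along every orbit of $Z$, but the available truncation machinery interacts naturally with only one $t$-structure at a time. The gap $\ush p - \ufl p = 2$ on $Z$ is precisely what lets the $\flatpq$- and $\sharppq$-truncations be interleaved without interfering with each other; making this interleaving precise, and verifying that the resulting double truncation really lands in $\flatpq\Dlp{\cdot}0w \cap \sharppq\Dgp{\cdot}0w$ rather than in just one of the two, is where the genuine technical work lies.
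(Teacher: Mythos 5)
Your full-faithfulness argument is sound and takes a slightly different route from the paper: reducing to the vanishing of $\Hom(Li^*\cF, Ri^!\cG[k])$ for $k=0,1$ via the limit-over-subscheme-structures sequence, and exploiting $\ufl p = \ush p - 2$ on $\overline C_0 \ssm \tilde C_0$ together with Lemma~\ref{lem:purep-hom}, does work. (The paper instead extends morphisms from $\tilde C_0$ using~\cite[Corollary~2]{bez:pc} and then checks that an object of $\pq\Pnp {\overline C_0}w$ admits no nonzero subobject or quotient in $\pq\Pp {\overline C_0}w$ supported on $\overline C_0 \ssm \tilde C_0$ --- which is essentially your vanishing statement in the cases $k=0,1$.)

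The genuine gap is in essential surjectivity. The functors $(j_1)_!$ and $R(j_1)_*$ on which your orbit-by-orbit induction rests are not available here: for an open immersion of schemes, $R(j_1)_*$ of a coherent complex is in general only quasicoherent, and there is no shriek extension in the equivariant coherent derived category at all, so ``a double truncation of $R(j_1)_*\cF_U$'' and ``the image of $(j_1)_!\cF_U \to R(j_1)_*\cF_U$'' have no meaning as stated. The correct substitute, used in the paper, is~\cite[Corollary~2]{bez:pc}: every object of $\Db{\tilde C_0}$ is the restriction of some object $\tilde\cF \in \Db{\overline C_0}$. One replaces $\tilde\cF$ by $\q\Bl w\q\Bg w\tilde\cF$ to land in $\q\Dsp {\overline C_0}w$ and then applies the endofunctor $J_{!*} = \flatpq\Tl 0 \circ \sharppq\Tg 0$. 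The point you correctly flag as ``where the genuine technical work lies'' --- that this double truncation lands in $\flatpq\Dlp {\overline C_0}0w \cap \sharppq\Dgp {\overline C_0}0w$ rather than only in the first factor --- is settled by the distinguished triangle $(\flatpq\Tg 1\cF_1)[-1] \to J_{!*}\cF \to \cF_1 \to$ with $\cF_1 = \sharppq\Tg 0\cF$: the first term lies in $\flatpq\Dgp {\overline C_0}2w$, and since $\ush p(C) - \ufl p(C) \le 2$ for every $C \subset \overline C_0$ one has $\flatpq\Dgp {\overline C_0}2w \subset \sharppq\Dgp {\overline C_0}0w$, so $J_{!*}\cF$ is an extension of two objects of $\sharppq\Dgp {\overline C_0}0w$. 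Finally, both truncations restrict to the identity over $\tilde C_0$ when $\cF|_{\tilde C_0}$ is already in the heart there, giving $\tj^*J_{!*}\tilde\cF \cong \cF$. Without this mechanism your induction has neither a way to produce the extension at each step nor a verification that it lies in both hearts.
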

\begin{proof}
The proof of this proposition is copied verbatim, except for minor changes
in notation, from~\cite[Proposition~2.3]{as:pcs}, which in turn is closely
based on~\cite[Theorem~2]{bez:pc}.
Let $J_{!*}: \q\Dsp {\overline C_0}w \to \q\Dsp {\overline C_0}w$ be the
functor
$\flatpq\Tl 0 \circ \sharppq\Tg 0$.  We claim that $J_{!*}$ actually
takes values in $\pq\Pnp {\overline C_0}w$.  Given $\cF \in
\q\Dsp {\overline C_0}w$, let $\cF_1 = \sharppq\Tg 0\cF$.  Then we have a
distinguished triangle 
\[
(\flatpq\Tg 1\cF_1)[-1] \to J_{!*}(\cF) \to \cF_1 \to.
\]
Note that $(\flatpq\Tg 1\cF_1)[-1] \in \flatpq\Dgp {\overline C_0}2w$. 
Now, $\ush p(C) - \ufl p(C) \le 2$ for all $C \subset \overline C_0$, and
this implies that $\flatpq\Dgp {\overline C_0}2w \subset \sharppq\Dgp
{\overline C_0}0w$.  Clearly, $\cF_1 \in \sharppq\Dgp {\overline C_0}0w$,
so it follows that $J_{!*}\cF \in \sharppq\Dgp {\overline C_0}0w$.  Since
$J_{!*}$ obviously takes values in $\flatpq\Dlp {\overline C_0}0w$, we have
$J_{!*}\cF \in \pq\Pnp {\overline C_0}w$.

Next, note that if $\cF \in \q\Dsp {\overline C_0}w$ is such that
$\cF|_{\tilde C_0} \in
\pq\Pnp {\tilde C_0}w$, then both $(\sharppq\Tg 0\cF)|_{\tilde C_0}$ and
$(\flatpq\Tl 0\cF)|_{\tilde C_0}$, and hence $(J_{!*}\cF)|_{\tilde C_0}$,
are isomorphic to $\cF|_{\tilde C_0}$.  In
particular, we can see now that $\tj^*$ is essentially surjective.  Given
$\cF
\in \pq\Pnp {\tilde C_0}w$, let $\tilde \cF$ be any object in $\Db
{\overline C_0}$
such that $\tj^*\tilde \cF \cong \cF$.  (Such an object exists
by~\cite[Corollary~2]{bez:pc}.)  Replacing $\tilde\cF$ by $\q\Bl w\q\Bg
w\tilde \cF$, we may assume that $\tilde \cF \in \q\Dsp {\overline C_0}w$. 
Then $\cF' = J_{!*}(\tilde\cF)$ is an object of $\pq\Pnp {\overline C_0}w$
such that $\tj^*\cF' \cong \cF$. 

Now, if $\phi: \cF \to \cG$ is a morphism in $\pq\Pnp {\tilde C_0}w$, then
by~\cite[Corollary~2]{bez:pc}, we can find objects $\cF'$ and $\cG'$ in
$\Db {\overline C_0}$ and a morphism $\phi': \cF' \to \cG'$ such that
$\tj^*\cF'
\cong \cF$, $\tj^*\cG' \cong \cG$, and $\tj^*\phi' \cong \phi$.  By
applying $\q\Bl w \circ \q\Bg w$ and then $J_{!*}$, we may assume that
$\cF'$, $\cG'$, and $\phi'$ actually belong to $\pq\Pnp {\overline C_0}w$. 
This shows that $\tj^*$ is full. 

To show that $\tj^*$ is faithful, it suffices to show that if $\phi$ is
an isomorphism, then $\phi'$ must be as well.  Since $\phi'|_{\tilde C_0}$
is an isomorphism, the kernel and cokernel of $\phi'$ must be supported on
$\overline C_0 \ssm \tilde C_0$.  Thus, the proof of the proposition will
be complete once we prove that an object of $\pq\Pnp {\overline C_0}w$ has
no nonzero subobjects or quotients in $\pq\Pp {\overline C_0}w$ that are
supported on $\overline C_0 \ssm \tilde C_0$.

Let $\cF \in \pq\Pnp {\overline C_0}w$, and let $\cG \in \pq\Pp {\overline
C_0}w$ be a nonzero object supported on $\overline C_0 \ssm \tilde C_0$. 
We will actually show that $\Hom(\cF,\cG) = \Hom(\cG,\cF) = 0$.  There
exists some closed subscheme structure $i: Z \hto \overline C_0$ on
$\overline C_0 \ssm \tilde C_0$ and some object $\cG' \in \pq\Pp Zw$ such
that $\cG \cong i_*\cG'$.  Then $\Hom(\cF,\cG) \cong \Hom(Li^*\cF, \cG')$. 
By Lemma~\ref{lem:pb-purep-exact}, $Li^*\cF \in \flatpq\Dmll Z0w$. 
Clearly, $\flatpq\Dmll Z0w = \pq\Dmll Z{-1}w$, and since $\cG' \in \pq\Dgg
Z0w$, we see that $\Hom(Li^*\cF,\cG') = 0$.  Similarly, $\Hom(\cG,\cF) =
\Hom(\cG', Ri^!\cF) = 0$ because $Ri^!\cF \in \sharppq\Dpgg Z0w = \pq\Dpgg
Z1w$.
\end{proof}

\begin{prop}\label{prop:ic-pure}
Let $\cL \in \cg {C_0}$ be a coherent sheaf, $s$-pure of step $v$.  Define a
Deligne--Bezrukavnikov perversity $p: \orb{\overline C_0} \to \Z$ and a
baric perversity $q: \orb{\overline C_0} \to \Z$ by
\begin{align*}
p(C) &= 
\begin{cases}
r(C_0) - v & \text{if $C \subset \tilde C_0$,} \\
r(C_0) - v + \cod C - \cod C_0 - 1 & \text{if $C \subset \overline C_0 \ssm
\tilde C_0$,}
\end{cases} \\
q(C)
&= 
\begin{cases}
\alt C_0 + 2r(C) - 2r(C_0) - 2\cod C + 2\cod C_0 & \text{if $C \subset
\tilde C_0$,} \\
\alt C_0 + 2r(C) - 2r(C_0) - 2\cod C + 2\cod C_0 + 1
& \text{if $C \subset \overline C_0 \ssm \tilde C_0$.}
\end{cases}
\end{align*}
Let $w = 2v - \alt C_0$.  Then $\ru j_{!*}(\cL[v-r(C_0)]) \in \pq\Pnp
{\overline C_0}w$.
\end{prop}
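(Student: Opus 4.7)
The plan is to bootstrap Lemma~\ref{lem:cod1-pure} up to all of $\overline C_0$ using the equivalence of Proposition~\ref{prop:purep-ic}. The perversities $p$ and $q$ in the statement have been engineered precisely for this purpose: their restrictions to $\tilde C_0$ recover the perversities of Lemma~\ref{lem:cod1-pure}, and the Deligne--Bezrukavnikov perversity $p$ satisfies the strict-inequality hypothesis~\eqref{eqn:pp-strict} on $\overline C_0 \ssm \tilde C_0$.

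First I will verify these compatibilities, which amount to bookkeeping. The monotonicity and comonotonicity of $p$, together with the strict inequality in~\eqref{eqn:pp-strict}, follow by case analysis: on $\overline C_0 \ssm \tilde C_0$, where $\cod C - \cod C_0 \ge 2$, the jump $p(C) - p(C_0) = \cod C - \cod C_0 - 1$ lies strictly between $0$ and $\cod C - \cod C_0$, and the remaining cases are immediate. For the agreement with Lemma~\ref{lem:cod1-pure} on $\tilde C_0$, both sources give $p(C_0) = r(C_0) - v$ and $q(C_0) = \alt C_0$ at $C_0$; on $C \subset \tilde C_0 \ssm C_0$ one has $\cod C - \cod C_0 = 1$ and $\ufl r(C) = r(C) - 1$, and the two expressions for $q(C)$ then coincide.

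Set $\cF = \ru j_{!*}(\cL[v-r(C_0)])$. Lemma~\ref{lem:cod1-pure} directly yields $\cF|_{\tilde C_0} \in \pq\Pnp{\tilde C_0}w$. With the hypotheses of Proposition~\ref{prop:purep-ic} now in place, $\tj^* : \pq\Pnp{\overline C_0}w \to \pq\Pnp{\tilde C_0}w$ is an equivalence of categories. It therefore suffices to show that $\cF$ itself lies in $\pq\Pnp{\overline C_0}w$: any object of that category whose restriction to $\tilde C_0$ agrees with $\cF|_{\tilde C_0}$ must then be isomorphic to $\cF$ via the equivalence.

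I expect the main obstacle to be this last step: the direct verification that $\cF \in \flatpq\Dll{\overline C_0}0w \cap \sharppq\Dgg{\overline C_0}0w$ on orbits $C \subset \overline C_0 \ssm \tilde C_0$. On such $C$, the defining properties of the intermediate extension supply $Li_C^*\cF \in \ru\Dml{\overline C}{-1}$ and $Ri_C^!\cF \in \ru\Dpg{\overline C}{1}$, and these staggered bounds must be translated into the corresponding pure-perverse bounds. The translation exploits precisely the ``$-1$'' shift in $p(C)$ and the ``$+1$'' shift in $q(C)$ built into the definitions on $\overline C_0 \ssm \tilde C_0$; the strict inequalities in the intermediate-extension conditions are exactly what is needed to absorb these shifts and land inside $\q\Dmll C{\ufl p(C)}w$. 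For the ``$\ge$'' half of the claim one passes to the dual perversity by applying $\D$, using that $\D \cF$ is itself an intermediate extension (of a shift of $\D \cL$, with respect to the dual staggered perversity), so the same argument applies.
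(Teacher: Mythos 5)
Your setup and bookkeeping are fine, and you have identified the two correct ingredients (Lemma~\ref{lem:cod1-pure} and Proposition~\ref{prop:purep-ic}), but you use the equivalence $\tj^*$ in the wrong direction, and the step you defer to the end is exactly the hard content of the proposition --- and your sketch of it does not go through. Reducing to ``it suffices to show that $\cF$ itself lies in $\pq\Pnp{\overline C_0}w$'' is vacuous: that is the statement to be proved, so the equivalence has bought you nothing. You then propose to verify directly that $Li_C^*\cF$ and $Ri_C^!\cF$ satisfy the pure-perverse bounds on orbits $C \subset \overline C_0 \ssm \tilde C_0$ by ``translating'' the intermediate-extension conditions $Li_C^*\cF \in \ru\Dml{\overline C}{-1}$ and $Ri_C^!\cF \in \ru\Dpg{\overline C}{1}$. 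This translation fails: the staggered condition bounds $h^k(Li_C^*\cF)$ in $\trl\cgl{\overline C}{-1-2k}$, a bound that grows without limit as $k \to -\infty$, whereas membership in $\flatpq\Dmll{\overline C_0}0w$ requires in particular the uniform baric bound $Li_C^*\cF \in \q\Dmsl{\overline C}w$, with $w$ independent of the cohomological degree. No $\pm 1$ shift in $p$ or $q$ converts a degree-dependent bound into a uniform one. Indeed, that uniform bound is essentially the baric purity of the $\cIC$ sheaf (Corollary~\ref{cor:ic-bound} and Lemma~\ref{lem:ic-pure}), which the paper \emph{derives from} the present proposition; assuming it here would be circular.

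The paper runs the argument the other way. It uses the equivalence of Proposition~\ref{prop:purep-ic} to produce the unique $\cG \in \pq\Pnp{\overline C_0}w$ with $\tj^*\cG \cong \cF|_{\tilde C_0}$, so that $\cG$ is pure-perverse by construction. It then computes $\ufl p(C) + \lceil\frac{q(C)+w}{2}\rceil = \ufl r(C)$ and the analogous quantity for $\ush p$, and applies Proposition~\ref{prop:purep-stag} to conclude $\cG \in \flatru\cM(\overline C_0) \cap \sharpru\cM(\overline C_0)$. Since $\cF$ is, up to isomorphism, the unique object of that intersection restricting to $\cL[v-r(C_0)]$ on $C_0$, it follows that $\cG \cong \cF$. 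To repair your proof, adopt this direction: the only facts needed about the deep orbits are staggered (not pure-perverse) bounds on $\cG$, and those come for free from Proposition~\ref{prop:purep-stag}.
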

\begin{proof}
We first prove that $p$ is a monotone and comonotone Deligne--Bezrukavnikov perversity. 
Suppose $C' \subset \overline C$.  It is easy to check that
\[
p(C') - p(C) =
\begin{cases}
0 & \text{if $C, C' \subset \tilde C_0$,} \\
\cod C' - \cod C_0 - 1 & \text{if $C \subset \tilde C_0$ but $C'
\not\subset \tilde C_0$,} \\
\cod C' - \cod C & \text{if $C, C' \subset \overline C_0 \ssm \tilde C_0$.}
\end{cases}
\]
In all cases, it follows that
\[
0 \le p(C') - p(C) \le \cod C' - \cod C.
\]
Moreover, in the case where $C = C_0$ and $C' \subset \overline C_0 \ssm \tilde C_0$, we know that $\cod C' - \cod C_0 \ge 2$, and it follows that condition~\eqref{eqn:pp-strict} holds.

Note that for $C \subset \tilde C_0 \ssm C_0$, we have $\cod C - \cod C_0 =
1$, so the restrictions to $\orb{\tilde C_0}$ of the functions $p$ and $q$
defined here agree with those defined in Lemma~\ref{lem:cod1-pure}.  Let
$\cF = \ru j_{!*}(\cL[v - r(C_0)])$.  By Lemma~\ref{lem:cod1-pure},
$\cF|_{\tilde C_0} \in \pq\Pnp {\tilde C_0}w$.  Then, because the
inequalities~\eqref{eqn:pp-strict} hold, we may invoke
Proposition~\ref{prop:purep-ic}, which gives us a unique object $\cG \in
\pq\Pnp {\overline C_0}w$ such that $\tj^*\cG \cong \cF|_{\tilde C_0}$. 
We must show that $\cG \cong \cF$.  

A straightforward calculation shows that
\[
\ufl p(C) + \frac{q(C) + w}{2}
=
\begin{cases}
r(C_0) & \text{if $C = C_0$,} \\
r(C) - 1& \text{if $C \subset \tilde C_0 \ssm C_0$,} \\
r(C) - \frac{3}{2}& \text{if $C \subset \overline C_0 \ssm \tilde C_0$.}
\end{cases}
\]
Thus, $\ufl p(C) + \lceil \frac{q(C) + w}{2}\rceil = \ufl r(C)$.  Since
$\cG \in \flatpq\Pp {\overline C_0}w$, Proposition~\ref{prop:purep-stag}
tells us that $\cG \in \flatru\cM(\overline C_0)$.  Similarly, we have
\[
\ush p(C) + \frac{q(C) + w}{2}
=
\begin{cases}
r(C_0) & \text{if $C = C_0$,} \\
r(C) & \text{if $C \subset \tilde C_0 \ssm C_0$,} \\
r(C) + \half & \text{if $C \subset \overline C \ssm \tilde C_0$.}
\end{cases}
\]
Let $s(C) = \ush p(C) + \lceil \frac{q(C) + w}{2}\rceil$.  Then, as before,
Proposition~\ref{prop:purep-stag} tells us that $\cG \in {}^{\sst
s}\cM(\overline C_0)$.  But $s$ and $\ush r$ agree on $\orb{\overline
C_0} \ssm \orb{\tilde C_0}$, and we already know that $\cG|_{\tilde
C_0} \cong \cF|_{\tilde C0} \in \sharpru\cM(\tilde C_0)$, so we may
conclude that $\cG \in \sharpru\cM(\overline C_0)$.

Since $\cF$ is, up to isomorphism, the unique object in
$\flatru\cM(\overline C_0) \cap \sharpru\cM(\overline C_0)$ with the
property that $\cF|_{C_0} \cong \cL[v-r(C_0)]$, we conclude that $\cG \cong
\cF$, as desired.
\end{proof}

The formulas for the perversities used in Proposition~\ref{prop:ic-pure}
are carefully chosen so as to ensure that, after calculating $\ufl p(C) +
\frac{q(C)+w}{2}$ and $\ush p(C) + \frac{q(C)+w}{2}$, we are able to invoke
Proposition~\ref{prop:purep-stag}.  Unfortunately, those calculations have
the aesthetically unpleasant property of not being integer-valued.  We
could perhaps improve the aesthetics by modifying the definition of $q$. 

Let us briefly study how this would change the subsequent calculations. 
We retain all the notation used in the proof of
Proposition~\ref{prop:ic-pure}, including the definition of $q$.  
We have proved that $\cF \in \q\Dsl {\overline C_0}w$, or, equivalently,
that
\begin{equation}\label{eqn:ic-pure1}
i_C^*h^k(\cF)|_C \in \cgl C{\lfloor (w+q(C))/2\rfloor}
\end{equation}
for all $k$.  Note that $w \equiv \alt C_0 \pmod{2}$.  From the definition
of $q$, we see that
\[
\begin{aligned}
q(C) + w &\equiv 0 \pmod 2 &&\text{if $C \subset \tilde C_0$,} \\
q(C) + w &\equiv 1 \pmod 2 &&\text{if $C \subset \overline C_0 \ssm \tilde
C_0$.}
\end{aligned}
\]
For $n \equiv 1 \pmod 2$, we have $\lfloor n/2\rfloor = (n-1)/2$, 
so we can refine~\eqref{eqn:ic-pure1} by defining $\qprime: \orb{\overline
C_0} \to \Z$ by
\[
\qprime(C) = \alt C_0 + 2r(C) - 2r(C_0) - 2\cod C + 2\cod C_0
=
\begin{cases}
q(C) & \text{if $C \subset \tilde C_0$,} \\
q(C) - 1 & \text{if $C \subset \overline C_0 \ssm \tilde C_0$.}
\end{cases}
\]
We then have
\[
i_C^*h^k(\cF)|_C \in \cgl C{(w+\qprime(C))/2},
\]
so $\cF \in {}_{\sst \qprime}\Dsl {\overline C_0}w$.  

By replacing $q$ by $\qprime$, we have lost the two-sided nature of
Proposition~\ref{prop:ic-pure}: it is not true in general that $\cF \in
{}_{\sst \qprime}\Dsg {\overline C_0}w$.  For a one-sided statement alone,
however, we could further replace $\qprime$ by any larger function. 
Pushing forward to $\Db X$ by $i_{C_0*}$, we obtain the following useful
result.

\begin{cor}\label{cor:ic-bound}
Let $\cL \in \cg {C_0}$ be a coherent sheaf, $s$-pure of step $v$.  Let
$q: \orb X \to \Z$ be any baric perversity such that
\[
q(C) \ge \alt C_0 + 2r(C) - 2r(C_0) - 2\cod C + 2\cod C_0
\qquad\text{if $C \subset \overline C_0$,}
\]
and let $w = 2v - \alt C_0$.  Then $\ru\cIC(\overline C_0, \cL[v - r(C_0)])
\in \q\Dsl Xw$. \qed
\end{cor}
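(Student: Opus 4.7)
The plan is to deduce the corollary from Proposition~\ref{prop:ic-pure} together with the refinement of that result in the paragraph immediately preceding this statement, and then to push forward to $X$ via $i_{C_0*}$.

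The first step is to apply Proposition~\ref{prop:ic-pure}, whose specific choices of Deligne--Bezrukavnikov and baric perversities yield $\ru j^{C_0}_{!*}(\cL[v-r(C_0)])$ in the associated pure-perverse category on $\overline C_0$, and hence in the corresponding $\Dsl{\overline C_0}w$ category. The refinement in the paragraph preceding the corollary then strengthens this to $\ru j^{C_0}_{!*}(\cL[v-r(C_0)]) \in {}_{\sst \qprime}\Dsl{\overline C_0}w$, where $\qprime$ is exactly the function appearing as the lower bound in the hypothesis of the present statement.

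The second step is to invoke the monotonicity of $\q\Dsl{\overline C_0}w$ in the perversity $q$: since the defining condition $i_C^*\cF|_C \in \cgl C{\lfloor(w+q(C))/2\rfloor}$ weakens as $q(C)$ grows, the assumption $q(C) \ge \qprime(C)$ for all $C \subset \overline C_0$ yields an inclusion ${}_{\sst \qprime}\Dsl{\overline C_0}w \subset \q\Dsl{\overline C_0}w$.

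Finally, one pushes forward by $i_{C_0*}$. By Lemma~\ref{lem:baric-res}(4), this functor sends $\q\Dsl{\overline C_0}w$ into $\q\Dsl Xw$; and the values of $q$ on orbits $C \not\subset \overline C_0$ do not matter, since $i_C^*(i_{C_0*}\cF)|_C$ vanishes on such $C$ (as $C \cap \overline C_0 = \emptyset$) and the defining condition at $C$ becomes vacuous. This gives $\ru\cIC(\overline C_0, \cL[v-r(C_0)]) = i_{C_0*}\ru j^{C_0}_{!*}(\cL[v-r(C_0)]) \in \q\Dsl Xw$, as required. There is no substantive obstacle in the argument: the corollary is essentially a repackaging of Proposition~\ref{prop:ic-pure} and the ensuing refinement for convenient use later in the paper.
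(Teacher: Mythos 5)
Your proof is correct and follows essentially the same route the paper takes: the corollary is indeed just Proposition~\ref{prop:ic-pure} plus the parity refinement producing $\qprime$ in the preceding paragraph, combined with the observation that $\q\Dsl{\overline C_0}w$ only grows as $q$ increases, and then pushed forward by $i_{C_0*}$ via Lemma~\ref{lem:baric-res}(4). The remark about orbits $C \not\subset \overline C_0$ being irrelevant matches the paper's own comment following the corollary.
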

Note that no conditions are imposed on the values of $q(C)$ for $C
\not\subset \overline C_0$.  Since $\ru\cIC(\overline C_0, \cL[v -
r(C_0)])$ is supported on $\overline C_0$, it is clear that the values of
$q$ outside $\overline C_0$ have no bearing on this statement.

Recall that a simple staggered sheaf $\cF = \ru\cIC(\overline C_0, \cL[v-r(C_0)])$ is characterized by the property that $Li_C^*\cF \in \ru\Dmlt {\overline C}0$ and $Ri_C^!\cF \in \ru\Dpgt {\overline C}0$ for all $C \subset \overline C_0 \ssm C_0$.  The following result, which illustrates the use of Corollary~\ref{cor:ic-bound}, gives a baric analogue of this property in the case of the self-dual staggered perversity.  (This result will not be used in the sequel.)  

\begin{prop}\label{prop:ic-res-strict}
Assume that $r(C) = \half\scod C$.  Let $\cL \in \cg {C_0}$ be a coherent sheaf, $s$-pure of step $v$, and let $w = 2v - \alt C_0$.  For any orbit $C \subset \overline C_0 \ssm C_0$, we have
$Li_C^*\ru\cIC(\overline C_0, \cL[v-\half\scod C_0]) \in \Dmslt {\overline C}w$ and
$Ri_C^!\ru\cIC(\overline C_0, \cL[v-\half\scod C_0]) \in \Dpsgt {\overline C}w$.
\end{prop}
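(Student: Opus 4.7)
The plan is to reduce both assertions to Corollary~\ref{cor:ic-bound}, handling the second by duality. Write $\cF = \ru\cIC(\overline C_0, \cL[v - r(C_0)])$. Under the hypothesis $r(C') = \half\scod C' = \half(\alt C' + \cod C')$, the lower bound appearing in Corollary~\ref{cor:ic-bound} simplifies:
\[
\alt C_0 + 2r(C') - 2r(C_0) - 2\cod C' + 2\cod C_0 = \alt C' - (\cod C' - \cod C_0).
\]
This equals $\alt C_0$ when $C' = C_0$, and is at most $\alt C' - 1$ whenever $C' \subsetneq \overline C_0$, since then $\cod C' - \cod C_0 \ge 1$.

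Define a baric perversity $q : \orb X \to \Z$ by $q(C_0) = \alt C_0$, $q(C') = \alt C' - 1$ for $C' \subset \overline C_0 \ssm C_0$, and $q(C') = \alt C'$ otherwise. By the calculation above, $q$ satisfies the hypothesis of Corollary~\ref{cor:ic-bound}, so $\cF \in \q\Dsl X w$. Now fix $C \subset \overline C_0 \ssm C_0$. Because $C_0$ is open in the irreducible $\overline C_0$, we have $\overline C \subset \overline C_0 \ssm C_0$, hence $q(C') = \alt C' - 1$ for every orbit $C' \in \orb{\overline C}$. By Lemma~\ref{lem:baric-res}(2), $Li_C^*\cF \in \q\Dmsl{\overline C}w$. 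Comparing the floor conditions in~\eqref{eqn:qcgl} for $\q\cgl{\overline C}w$ with those for $\cgl{\overline C}{w-1}$ (using the middle baric perversity on $\overline C$), the identity
\[
\lfloor (w + \alt C' - 1)/2 \rfloor = \lfloor ((w-1) + \alt C')/2 \rfloor
\]
gives $\q\Dmsl{\overline C}w = \Dmsl{\overline C}{w-1} = \Dmslt{\overline C}w$, proving the first assertion.

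For the second, invoke duality. Since $\dualr = r$, the staggered $t$-structure is self-dual, so $\D\cF$ is again a simple IC, of the form $\ru\cIC(\overline C_0, \cL'[v' - r(C_0)])$ where $\cL'$ is an irreducible vector bundle on $C_0$ that is $s$-pure of step $v' = \alt C_0 - v$ (by Serre--Grothendieck duality on $C_0$, together with the identity $\cod C_0 = 2r(C_0) - \alt C_0$); hence $w' := 2v' - \alt C_0 = -w$. Applying the first assertion to $\D\cF$ yields $Li_C^*\D\cF \in \Dmslt{\overline C}{-w}$. Since $Ri_C^!\cF \cong \D(Li_C^*\D\cF)$ and the duality formula for baric structures on $\overline C$ gives $\D(\Dmsl{\overline C}{-w-1}) = \Dpsg{\overline C}{w+1}$, we conclude $Ri_C^!\cF \in \Dpsgt{\overline C}w$. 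The only real subtlety is verifying that the chosen $q$ indeed forces strict baric degree on every orbit of $\overline C$ (handled by the inclusion $\overline C \subset \overline C_0 \ssm C_0$); everything else is bookkeeping around Corollary~\ref{cor:ic-bound}.
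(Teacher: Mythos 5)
Your proposal is correct and follows essentially the same route as the paper's own proof: the same auxiliary baric perversity $q$ (equal to $\alt C'$ off $\overline C_0 \ssm C_0$ and $\alt C' - 1$ on it), the same appeal to Corollary~\ref{cor:ic-bound} and Lemma~\ref{lem:baric-res}, and the same duality argument via $Ri_C^!\cF \cong \D(Li_C^*\D\cF)$ for the second assertion. The extra details you supply (the closure inclusion $\overline C \subset \overline C_0 \ssm C_0$ and the floor identity showing $\q\Dmsl{\overline C}w = \Dmslt{\overline C}w$) are correct verifications of steps the paper treats as immediate.
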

\begin{proof}
Consider the baric perversity $q: \orb X \to \Z$ given by
\[
q(C) = \begin{cases}
\alt C & \text{if $C \not\subset \overline C_0 \ssm C_0$,} \\
\alt C - 1 & \text{if $C \subset \overline C_0 \ssm C_0$.}
\end{cases}
\]
This function obeys the condition in Corollary~\ref{cor:ic-bound} with respect to the middle staggered perversity $r(C) = \half\scod C$:
\[
q(C) \ge \alt C_0 + \scod C - \scod C_0 - 2\cod C + 2\cod C_0 = \alt C + \cod C_0 - \cod C
\]
for all $C \subset \overline C_0$, since $\cod C_0 - \cod C \le -1$ for any $C \subset \overline C_0 \ssm C_0$.  Invoking that corollary, we have $\cIC(\overline C_0, \cL[v - \half\scod C_0]) \in \q\Dsl Xw$.  It
follows that $Li_C^*\cIC(\overline C_0, \cL[v-\half\scod C_0]) \in \q\Dmsl {\overline C}w$ by Lemma~\ref{lem:baric-res}.  Since $q(C') = \alt C' - 1$ for all $C' \in \orb{\overline C}$, it
follows that $Li_C^*\cIC(\overline C_0,
\cL[v - \half\scod C_0]) \in \Dmslt {\overline C}w$.

The same argument applies to
$\D\cIC(\overline C_0, \cL[v-\half\scod C_0]) \cong \cIC(\overline C_0,
\D(\cL[v - \half\scod C_0]))$, and shows that $Li_C^*\D\cIC(\overline C_0, \cL[v
- \half\scod C_0]) \in \Dmslt \overline{C}{-w}$.  Since $Ri_C^!\cIC(\overline C_0, \cL[v -\half\scod C_0]) \cong \D(Li_C^*\D\cIC(\overline C_0, \cL[v - \half\scod C_0]))$, we
conclude that $Ri_C^!\cIC(\overline C_0, \cL[v - \half\scod C_0]) \in \Dpsgt
{\overline C}w$, as desired.
\end{proof}

%%%%%%%%%%%%%%%%%%%%%%%%%%%%%%%%%%%%%%%%%%%%%%%%%%%%%%%%%%%%%%%%%%%%%%%%%%%
\section{The Baric Purity Theorem}
\label{sect:purity}
%%%%%%%%%%%%%%%%%%%%%%%%%%%%%%%%%%%%%%%%%%%%%%%%%%%%%%%%%%%%%%%%%%%%%%%%%%%

In this section, we prove the baric version of the Purity Theorem for staggered sheaves.   
Henceforth, unless otherwise specified, all references to baric degrees,
purity, and baric truncation should be understood to be with respect to the
self-dual baric structure $(\{\Dsl Xw\},\{\Dsg Xw\})_{w \in \Z}$
corresponding to the middle baric perversity $q(C) = \alt C$.  In
particular, the left-subscript ``$q$'' will generally be omitted.

\begin{defn}
A staggered perversity $r: \orb X \to \Z$ is said to be \emph{moderate} if
for any two orbits $C, C' \subset X$ with $C' \subset \overline C$, the
following inequalities all hold:
\begin{gather}
\cod C' - \cod C \le r(C') - r(C) \le \alt C' - \alt C 
\label{eqn:f1}\\
\half\alt C' - \half\alt C \le r(C') - r(C) \le \half\alt C' + \cod C' -
\half\alt C - \cod C \label{eqn:f2}
\end{gather}
\end{defn}

\begin{rmk}
Note that a necessary condition for the existence of a moderate staggered
perversity is that
\[
\cod C' - \cod C \le \alt C' - \alt C
\]
whenever $C' \subset \overline C$.  Under these conditions, the staggered perversities $r(C) = \lfloor \half\scod C\rfloor$ and $r(C) = \lceil \half\scod C \rceil$ are automatically moderate.
\end{rmk}

\begin{lem}\label{lem:ic-pure}
Let $\cL \in \cg {C_0}$ be a coherent sheaf, $s$-pure of step $v$.  If $r$
is a moderate staggered perversity, $\ru\cIC(\overline C_0, \cL[v-r(C_0)])$
is pure of baric degree $w = 2v - \alt C_0$.
\end{lem}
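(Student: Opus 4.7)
The plan is to deduce purity by applying Corollary~\ref{cor:ic-bound} twice: once to $\cF := \ru\cIC(\overline C_0, \cL[v-r(C_0)])$ directly, giving $\cF \in \Dsl Xw$, and once to its Serre--Grothendieck dual, giving $\cF \in \Dsg Xw$. Throughout, ``baric degree'' is with respect to the self-dual perversity $q(C) = \alt C$ (as is our standing convention).

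For the first half, I would set $q(C) = \alt C$ and check that the hypothesis of Corollary~\ref{cor:ic-bound} at $C \subset \overline C_0$, namely
\[
\alt C \ge \alt C_0 + 2r(C) - 2r(C_0) - 2\cod C + 2\cod C_0,
\]
is equivalent, after collecting terms and dividing by $2$, to
\[
r(C) - r(C_0) \le \tfrac{1}{2}\alt C + \cod C - \tfrac{1}{2}\alt C_0 - \cod C_0,
\]
which is precisely the right-hand inequality of~\eqref{eqn:f2}. The corollary then yields $\cF \in \Dsl Xw$.

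For the second half, I would use that the baric structure is self-dual, so $\cF \in \Dsg Xw$ is equivalent to $\D\cF \in \Dsl X{-w}$. As in the proof of Lemma~\ref{lem:cod1-pure}, the sheaf $\cL' := (\D\cL)[\cod C_0]$ is $s$-pure of step $v' = \alt C_0 - v$, and one has the identification $\D\cF \cong \bru\cIC(\overline C_0, \cL'[v' - \bar r(C_0)])$, with $2v' - \alt C_0 = -w$. Applying Corollary~\ref{cor:ic-bound} to $\D\cF$ in place of $\cF$, with staggered perversity $\bar r$ and the same baric perversity $q(C) = \alt C$, the required inequality becomes (after substituting $\bar r(C) = \scod C - r(C)$ and simplifying) exactly
\[
r(C) - r(C_0) \ge \tfrac{1}{2}(\alt C - \alt C_0),
\]
i.e.\ the left-hand inequality of~\eqref{eqn:f2}. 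So $\D\cF \in \Dsl X{-w}$, hence $\cF \in \Dsg Xw$.

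The two parts combine to give $\cF \in \Dsl Xw \cap \Dsg Xw$, which is purity of baric degree $w$. The inequalities~\eqref{eqn:f1} do not appear directly in the calculation; their role is the background one of ensuring $r$ and $\bar r$ are both monotone staggered perversities, so that $\ru\cIC$, $\bru\cIC$, and the duality identity $\D(\ru\cIC(\overline C_0, -)) \cong \bru\cIC(\overline C_0, \D(-))$ are all available. Since the substantive content is packaged into Corollary~\ref{cor:ic-bound}, the main thing to be careful about is the arithmetic of the two rearrangements; there is no serious obstacle beyond that.
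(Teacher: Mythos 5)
Your proposal is correct and is essentially the paper's own argument: both apply Corollary~\ref{cor:ic-bound} with $q(C)=\alt C$ to $\cF$ (using the right-hand inequality of~\eqref{eqn:f2}) and then to $\D\cF$ viewed as an intersection cohomology complex for the dual perversity $\bar r$ (which, after unwinding, is the left-hand inequality of~\eqref{eqn:f2}, i.e.\ the statement that $\bar r$ is again moderate). The explicit arithmetic you carry out is exactly what the paper leaves implicit in the phrase ``the dual of a moderate perversity is also moderate.''
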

\begin{proof}
Let $\cF = \ru\cIC(\overline C_0, \cL[v-r(C_0)])$.
It follows from the inequalities~\eqref{eqn:f2} that
\[
\alt C \ge \alt C_0 + 2r(C) - 2r(C_0) - 2\cod C + 2\cod C_0
\]
for all $C \subset \overline C_0$.  Then Corollary~\ref{cor:ic-bound}
tells us that $\cF \in \Dsl Xw$.  Note that the dual of a moderate perversity is also moderate, so we may
apply the same argument to $\D\cF \in \bru \cM(X)$.  We find that $\D\cF \in \Dsl X{-w}$, so $\cF$ is pure of baric degree $w$.
\end{proof}

\begin{prop}\label{prop:bary-trunc}
Let $r: \orb X
\to \Z$ be a moderate staggered perversity.  Then the category of staggered
sheaves
$\ru\cM(X)$ is stable under the baric truncation functors $\Bl w$ and $\Bg
w$ with respect to the middle baric perversity.
\end{prop}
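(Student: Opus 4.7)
The plan is to induct on the length of $\cF$ in the finite-length abelian category $\ru\cM(X)$.

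For the base case, suppose $\cF$ is simple. By the classification of simple staggered sheaves recalled in Section~\ref{sect:stagt}, $\cF \cong \ru\cIC(\overline C_0, \cL[v-r(C_0)])$ for some orbit $C_0 \in \orb X$ and some irreducible vector bundle $\cL \in \cg{C_0}$, which is automatically $s$-pure of some step $v$. By Lemma~\ref{lem:ic-pure}, $\cF$ is pure of baric degree $w_0 = 2v - \alt C_0$; that is, $\cF \in \Dsl X{w_0} \cap \Dsg X{w_0}$. Consequently, for each $w \in \Z$, the truncation $\Bl w \cF$ equals either $\cF$ (if $w \ge w_0$) or $0$ (if $w < w_0$), and both possibilities lie in $\ru\cM(X)$. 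The same observation handles $\Bg w \cF$.

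For the inductive step, suppose $\cF$ has length greater than one, and choose a simple subobject $\cF_1 \hto \cF$ with quotient $\cF_2 = \cF/\cF_1$. The short exact sequence $0 \to \cF_1 \to \cF \to \cF_2 \to 0$ in $\ru\cM(X)$ is a distinguished triangle in $\Db X$. For each $w$, the pair $(\Dsl Xw, \Dsg X{w+1})$ is a semi-orthogonal decomposition of $\Db X$, so the baric truncation functors $\Bl w$ and $\Bg{w+1}$ are triangulated. Applying $\Bl w$ therefore produces a distinguished triangle
\[
\Bl w \cF_1 \to \Bl w \cF \to \Bl w \cF_2 \to .
\]
By the base case $\Bl w \cF_1 \in \ru\cM(X)$, and by the induction hypothesis $\Bl w \cF_2 \in \ru\cM(X)$. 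Since $\ru\cM(X)$ is the heart of a $t$-structure, it is closed under extensions in $\Db X$, so the middle term $\Bl w \cF$ also lies in $\ru\cM(X)$. The argument for $\Bg w$ is strictly parallel (or one may simply invoke the distinguished triangle $\Bl w \cF \to \cF \to \Bg{w+1} \cF \to$ and note that two out of three terms are already known to be staggered).

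All of the genuine content of the proposition is packaged in Lemma~\ref{lem:ic-pure}, whose proof in turn rests on the intermediate-extension machinery of Section~\ref{sect:ic} and the comparison in Proposition~\ref{prop:purep-stag}. Once purity of simple staggered sheaves is available, the present statement is a formal consequence of two standard facts: the baric truncation functors are triangulated (being the projection functors attached to a semi-orthogonal decomposition), and the heart of a $t$-structure is closed under extensions in the ambient triangulated category.
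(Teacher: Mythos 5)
Your proof is correct and follows essentially the same route as the paper: induction on length, with the base case handled by Lemma~\ref{lem:ic-pure} (purity of simple objects) and the inductive step by applying the triangulated functor $\Bl w$ to the short exact sequence coming from a simple subobject and using that the heart is closed under extensions. The only difference is that you spell out explicitly why $\Bl w$ preserves distinguished triangles, which the paper leaves implicit.
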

\begin{proof}
Since every staggered sheaf has finite length, we may proceed by induction
on the length of $\cF$. If $\cF$ is simple, Lemma~\ref{lem:ic-pure} tells
us that $\cF$ is pure.  In particular, every baric truncation functor takes
$\cF$ either to itself or to $0$.

Now, suppose $\cF$ is not simple.  Let $\cF' \subset \cF$ be a simple
subobject, and form a short exact sequence
\[
0 \to \cF' \to \cF \to \cF'' \to 0.
\]
For any $w \in \Z$, we obtain a distinguished triangle
\[
\Bl w \cF' \to \Bl w\cF \to \Bl w\cF'' \to.
\]
The first term is in $\ru\cM(X)$ because $\cF'$ is simple, and the last
term is in $\ru\cM(X)$ by induction.  Therefore, $\Bl w\cF \in \ru\cM(X)$
as well.  The same argument shows that $\ru\cM(X)$ is stable under $\Bg w$
as well.
\end{proof}

Below is the first major theorem of the paper.  The parts of this theorem correspond to Proposition~5.3.1, Corollaire~5.3.4, Th\'eor\`eme~5.3.5, and Th\'eor\`eme~5.4.1 in~\cite{bbd}, respectively.

\begin{thm}[Baric Purity]\label{thm:purity}
Suppose $X$ is endowed with a recessed $s$-structure.
Let $r: \orb X \to \Z$ be a moderate staggered perversity.
\begin{enumerate}
\item Let $\cF$ be a staggered
sheaf.  If $\cF \in \Dsl Xw$, then every
subquotient of $\cF$ is in $\Dsl Xw$.  If $\cF \in \Dsg Xw$, then every
subquotient of $\cF$ is in $\Dsg Xw$.
\item Every simple staggered sheaf
is pure.
\item Every staggered sheaf
$\cF$ admits a unique finite filtration
\[
\cdots \subset \cF_{\le w-1} \subset \cF_{\le w} \subset \cF_{\le w+1}
\subset \cdots
\]
such that $\cF_{\le w}/\cF_{\le w-1} \in \Dsp Xw$.
\item Let $\cF \in \Db X$.  Then
$\cF \in \Dsl Xw$ if and only if $\ru
h^i(\cF) \in \Dsl Xw$ for all $i$, and $\cF \in \Dsg Xw$ if and only if
$\ru h^i(\cF) \in \Dsg Xw$ for all $i$.
\end{enumerate}
\end{thm}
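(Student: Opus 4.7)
The plan is to derive all four parts formally from Lemma~\ref{lem:ic-pure} and Proposition~\ref{prop:bary-trunc}. Part~(2) is immediate, being Lemma~\ref{lem:ic-pure} applied to the $\ru\cIC$ description of simple staggered sheaves.

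The key technical observation underlying Parts~(1) and~(3) is the following: given a short exact sequence $0 \to \cF' \to \cF \to \cF'' \to 0$ in $\ru\cM(X)$, applying the baric truncation functor $\Bg{w+1}$ gives a distinguished triangle in $\Db X$ whose three terms, by Proposition~\ref{prop:bary-trunc}, all lie in $\ru\cM(X)$; a distinguished triangle in the heart of a $t$-structure is automatically a short exact sequence. For Part~(1), the hypothesis $\Bg{w+1}\cF = 0$ then forces $\Bg{w+1}\cF' = \Bg{w+1}\cF'' = 0$, so subobjects and quotients (hence subquotients) of $\cF$ lie in $\Dsl Xw$; the $\Dsg Xw$ statement is dual.

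For Part~(3), set $\cF_{\le w} = \Bl w\cF$. By Proposition~\ref{prop:bary-trunc} this is a subsheaf of $\cF$ in $\ru\cM(X)$, and the successive quotient $\Bl w\cF / \Bl{w-1}\cF \cong \Bg w\Bl w\cF$ lies in $\Dsp Xw$. Finiteness of the filtration follows from the boundedness of the baric structure. For uniqueness, any filtration $\cF^\bullet$ with pure subquotients has $\cF^w \in \Dsl Xw$ and $\cF/\cF^w \in \Dsg{w+1}X$ (both by iterated extensions of pure objects), and these two properties characterize the baric truncation uniquely, forcing $\cF^w = \Bl w\cF$.

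Part~(4) is the deepest, and I expect the main obstacle to be the preliminary claim that $\Bl w$ and $\Bg w$ are $t$-exact with respect to the staggered $t$-structure. Proposition~\ref{prop:bary-trunc} shows they preserve the heart $\ru\cM(X)$; to extend this to $\ru\Dl X0$ and $\ru\Dg X0$, I would induct on the length of cohomological support, using the staggered truncation triangle $\ru\Tl{i-1}\cF \to \ru\Tl i\cF \to \ru h^i(\cF)[-i] \to$ and the closure of $\ru\Dl X0$ and $\ru\Dg X0$ under extensions. Once $t$-exactness is in place, the standard fact that $t$-exact functors commute with cohomology yields $\Bg{w+1} \circ \ru h^i \cong \ru h^i \circ \Bg{w+1}$, so by nondegeneracy of the staggered $t$-structure, $\cF \in \Dsl Xw$ iff $\Bg{w+1}\cF = 0$ iff $\ru h^i(\Bg{w+1}\cF) = \Bg{w+1}\ru h^i(\cF) = 0$ for all $i$ iff each $\ru h^i(\cF) \in \Dsl Xw$; the $\Dsg$ statement is dual.
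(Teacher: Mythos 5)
Your proof is correct and takes essentially the same route as the paper: parts (1)--(3) are argued identically (via Lemma~\ref{lem:ic-pure} and Proposition~\ref{prop:bary-trunc}, with your uniqueness argument in (3) just making explicit what the paper leaves implicit). The only divergence is in part (4), where you first upgrade Proposition~\ref{prop:bary-trunc} to staggered $t$-exactness of $\Bl w$ and $\Bg w$ and then commute them with $\ru h^i$, whereas the paper runs a direct induction using the adjunction $\Hom(\ru\Tg{k+1}\cF,\cdot) \cong \Hom(\cF,\cdot)$ and a Hom-vanishing; both arguments rest on the same input and your packaging is, if anything, slightly cleaner.
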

\begin{proof}
(1)~Suppose we have a short exact sequence of staggered sheaves $0 \to
\cF' \to \cF \to \cF'' \to 0$, with $\cF \in \Dsl Xw$.  Applying the
functor $\Bgt w$ to this sequence yields a new short exact sequence in
$\ru \cM(X)$ with middle term $0$.  Therefore, $\Bgt w\cF' = \Bgt w\cF'' =
0$ as well.  The proof for $\cF \in \Dsg Xw$ is similar.

(2)~This was proved in Lemma~\ref{lem:ic-pure}.

(3)~The desired filtration is given by $\cF_w = \Bl w\cF$.

(4)~If all $\ru h^i(\cF) \in \Dsl Xw$, the fact that $\cF \in \Dsl Xw$
follows (by induction on the number of nonzero cohomology objects) from
the fact that $\Dsl Xw$ is stable under extensions.  Conversely, suppose
$\cF \in \Dsl Xw$.  We proceed by induction on the number of nonzero
cohomology objects.  If $\cF$ has only one nonzero cohomology object,
there is nothing to prove.  Otherwise, choose some $k$ such that $\ru\Tl
k\cF$ and $\ru\Tg {k+1}\cF$ are both nonzero.  By
Proposition~\ref{prop:bary-trunc}, $\Bgt w\ru\Tg {k+1}\cF \in \ru\Dg
X{k+1}$, so
\[
\Hom(\ru\Tg {k+1}\cF, \Bgt w\ru\Tg {k+1}\cF) \cong \Hom(\cF, \Bgt w\ru\Tg
{k+1}\cF) = 0,
\]
where the last equality holds because $\cF \in \Dsl Xw$.  It follows that
$\Bgt w\ru\Tg {k+1}\cF = 0$, so $\ru\Tg {k+1}\cF \in \Dsl Xw$, and hence
$\ru\Tl k\cF \in \Dsl Xw$ as well.  By induction, we know that all
cohomology objects of $\ru\Tg {k+1}\cF$ and of $\ru\Tl k\cF$ lie in $\Dsl
Xw$, so all $\ru h^i(\cF) \in \Dsl Xw$.
\end{proof}

%%%%%%%%%%%%%%%%%%%%%%%%%%%%%%%%%%%%%%%%%%%%%%%%%%%%%%%%%%%%%%%%%%%%%%%%%%%
\section{$s$-structures on a $G$-orbit}
\label{sect:orbit}
%%%%%%%%%%%%%%%%%%%%%%%%%%%%%%%%%%%%%%%%%%%%%%%%%%%%%%%%%%%%%%%%%%%%%%%%%%%

In this section only, we assume that the ground field $\Bbbk$ is
algebraically closed.  

Let $C \subset X$ be a $G$-orbit.  Our goal in this section is to classify $s$-structures on $C$ in terms of the representation theory of a certain algebraic torus $T_C$, defined as follows.  Choose a closed point $x \in C$, and let $H
\subset G$ be the stabilizer of $x$.  We assume throughout this section that $H$ is connected.  Let $R \subset H$ be the radical of $H$, 
and let $U \subset H$ be the unipotent radical of $H$.  Let $T_C$ be a
maximal torus of $R$.

We claim that $T_C$ is canonical: that is, that making different choices in
the preceding paragraph would lead to a torus canonically isomorphic to
$T_C$.  Let $x'$ be another closed point of $C$, with stabilizer $H'$, and
let $T_C'$ be a maximal torus in the radical $R'$ of $H'$.  There is some
$g \in G$ such that $g \cdot x = x'$.  Then $g H g^{-1} = H'$, and $g T_C
g^{-1}$ is another maximal torus in $R'$.  Any two maximal tori in $R'$ are
conjugate, so by replacing $g$ by $r'g$ for a suitable $r' \in R'$, we may
achieve that $g T_C g^{-1} = T_C'$.  We thus obtain an isomorphism $f: T_C
\overset{\sim}{\to} T_C'$ given by $f(t) = gtg^{-1}$.   To show that $f$ is
independent of $g$, suppose $g' \in G$ is another element such that
$g'\cdot x = x'$ and $g'T_C(g')^{-1} = T_C'$.  Then $g' = gh$, where $h \in
H$ normalizes $T_C$.  But then it follows that $h$ centralizes $T_C$: the
image of $T_C$ in the reductive group $H/U$ is central, so for any $t \in
T_C$, we have $hth^{-1} = tu$ for some $u \in U$, and $tu \in T_C$ implies $u
= 1$.  We conclude that the isomorphism $T_C \cong T_C'$ given by
conjugation by $t \mapsto g't(g')^{-1}$ coincides with $f$.

Next, let $\cO_H$ and $\cO_U$ denote the $\Bbbk$-algebras of regular
functions on $H$ and $U$ respectively.  We will regard them as $H$-modules
and in particular as $T_C$-modules via the action
\[
g \cdot f : h \mapsto f(g^{-1} h g).
\]
Let $X(T_C)$ and $Y(T_C)$ denote the character and cocharacter lattices of
$T_C$, respectively.  Let $\fu$ denote the Lie algebra of $U$, and define
a subset $\Upsilon_C$ by
\[
\Upsilon_C = \{ \upsilon \in X(T_C) \mid
\text{$\upsilon$ occurs in the adjoint action of $T_C$ on $\fu$} \}.
\]
Let $S(\fu^*)$ denote the symmetric algebra on the dual vector space to
$\fu$.  In other words, $S(\fu^*)$ is the ring of regular functions $\fu
\to \Bbbk$.  Next, let $-\N\Upsilon_C$ denote the set of all nonpositive
integer linear combinations of elements of $\Upsilon_C$.  Clearly, the set
of $T_C$-weights on $\fu^*$ is $-\Upsilon_C$, and the set of $T_C$-weights
on $S(\fu^*)$ is $-\N\Upsilon_C$.  We will see later that $-\N\Upsilon_C$
is also the set of $T_C$-weights on $\cO_H$ and on $\cO_U$.

\begin{prop}\label{prop:sstruc-cc}
There is a canonical injective map
\[
\Psi_C: \{\text{$s$-structures on $C$}\} \to Y(T_C).
\]
\end{prop}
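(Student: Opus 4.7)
The idea is to extract $\Psi_C(\cS)$ by tracking how the filtration $\{\cgl Cw\}$ decomposes under the central torus $T_C$. Under the hypotheses of this section the fiber functor at $x$ gives a symmetric-monoidal equivalence $\cg C \simeq \mathrm{Rep}(H)$, where $\mathrm{Rep}(H)$ is the category of finite-dimensional rational $H$-representations; I transport $\cS$ across this equivalence and view an $s$-structure on $C$ as a tensor-compatible filtration on $\mathrm{Rep}(H)$.

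I first show that every simple $V$ is $s$-pure. Since $\cgl Cw$ is a Serre subcategory, axiom~(1) produces a minimal integer $s(V)$ with $V \in \cgl C{s(V)}$, and the definition of $\cgg Cw$ forces $V \in \cgg C{s(V)}$. Axioms~(2) and~(3) then say that whenever $V$ and $W$ are simple, $V \otimes W$ is pure of step $s(V)+s(W)$, and by closure of $\cgl Cw$ and $\cgg Cw$ under subobjects and quotients, every irreducible summand of $V \otimes W$ has the same step. In particular, the inclusion $\Bbbk \hookrightarrow V \otimes V^*$ together with $s(\Bbbk)=0$ (from $\Bbbk \otimes \Bbbk = \Bbbk$) yields $s(V^*)=-s(V)$.

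Every simple $V$ is trivial on the unipotent radical $U$ and is acted on by $T_C$ (which embeds in $H/U$ as the identity component of its center) through a single character $\chi_V \in X(T_C)$; as $V$ varies these exhaust $X(T_C)$, since any lift of $\mu \in X(T_C)$ to a maximal torus of $H/U$ has a dominant Weyl-conjugate restricting to $\mu$ (Weyl conjugation fixes the central subtorus $T_C$). The crux is to show that $s(V)$ depends only on $\chi_V$. For simples $V, V'$ with $\chi_V=\chi_{V'}$, the representation $V \otimes V'^*$ is trivial on $T_C U = R$ and so factors through the semisimple quotient $H/R$. I claim that every simple $W$ of a semisimple group satisfies $s(W)=0$: the determinant $\Lambda^{\dim W} W$ is a one-dimensional representation of a semisimple group, hence trivial, and it is a quotient of $W^{\otimes \dim W}$, so Serre closure gives $s(W) \ge 0$; the same argument for $W^*$ together with $s(W)+s(W^*)=0$ forces $s(W)=0$. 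Applied to the simple summands of $V \otimes V'^*$, this yields $s(V)=s(V')$.

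Setting $\lambda(\chi_V) := s(V)$ therefore defines a function $\lambda \colon X(T_C) \to \Z$; tensor-additivity applied to the highest-weight constituent of $V_\mu \otimes V_\nu$ gives $\lambda(\mu+\nu) = \lambda(\mu)+\lambda(\nu)$, so $\lambda \in Y(T_C)$, and I set $\Psi_C(\cS)=\lambda$. Canonicity is inherited from the canonicity of $T_C$ already established. Injectivity is then immediate: each $\cgl Cw$ is determined as a Serre subcategory by its simple objects, whose steps are recovered as $\lambda \circ \chi$, so the cocharacter determines $\cS$. The main technical obstacle is the vanishing claim $s(W)=0$ for simples of a semisimple group in the penultimate paragraph, which is what bridges the tensor structure and the $T_C$-character.
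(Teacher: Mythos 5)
Your construction lands on the same invariant as the paper's, but the two arguments are organized differently. The paper first isolates the sublattice $X(M) \subset X(T_C)$ of characters of one-dimensional representations of $H$, shows via top exterior powers that every character has a multiple in it, defines $\phi$ there by the step, extends $\Q$-linearly, and then uses $\step \bigwedge^{\dim V}V = (\dim V)\step V$ to get both $\phi(\chi_V) = \step V$ and integrality. You instead prove directly that the step of a simple depends only on its $T_C$-character, by noting that $V \otimes (V')^*$ factors through the semisimple quotient $H/R$ when the central characters agree and that simples of a semisimple group have step $0$ (again by the determinant trick). Your route avoids the detour through $\Q$ and the finite-index sublattice at the cost of the extra lemma on the semisimple quotient; both arguments ultimately rest on the same two mechanisms, namely tensor-additivity of steps and top exterior powers. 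Your explicit verification that simples are $s$-pure and that the central characters of simples exhaust $X(T_C)$ fills in details the paper leaves implicit.

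One justification needs repair. You assert that $\cgg Cw$ is closed under quotients and deduce that every irreducible constituent of $V \otimes W$ is pure of step exactly $s(V)+s(W)$. But $\cgg Cw$ is the right orthogonal of the Serre subcategory $\cgl C{w-1}$, hence closed under subobjects and extensions but \emph{not} under quotients: for $H = \bbG_m \ltimes \bbG_a$ with a focused cocharacter, the nonsplit two-dimensional representation with $\bbG_m$-weights $0,1$ lies in $\cgg C0$ while its weight-$1$ quotient does not. So a simple subquotient $S$ of a pure object of step $m$ satisfies only $s(S) \le m$ a priori, and the blanket claim that all constituents of $V\otimes W$ have step $s(V)+s(W)$ is essentially the statement you are proving (step is a function of the central character), so it cannot be assumed. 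Fortunately every place you invoke it can be rerouted: the copy of $\Bbbk$ in $V \otimes V^*$ is a genuine subobject (coevaluation), so it is pure of step $s(V)+s(V^*)$; the determinant argument for $s(W)\ge 0$ uses only Serre closure of $\cgl$; and for both $V\otimes(V')^*$ and the additivity step you should argue in the opposite direction --- once all simple constituents are known (by your semisimple-quotient lemma, respectively by the already-established fact that step depends only on the central character) to be pure of a common step $m$, extension-closure of $\cgl Cm$ and $\cgg Cm$ forces the whole tensor product to be pure of step $m$, which must then equal $s(V)+s(W)$ by uniqueness of step. With that patch the proof is complete.
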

\begin{proof}
We retain the notation used above: $H$ is the stabilizer of some closed
point $x \in C$, $R$ and $U$ are its radical and unipotent radical, and
$T_C$ is a maximal torus in $R$.  Recall that the category $\cg C$ of
$G$-equivariant coherent sheaves on $C$ is equivalent to the category $\rg
H$ of finite-dimensional algebraic representations of $H$.  Moreover, this
equivalence respects tensor products and internal Hom.  For the remainder
of the proof, we will work exclusively in the setting of $\rg H$.  In
particular, an ``$s$-structure'' will now mean a collection of full
subcategories $(\{\rgl Hw\}, \{\rgg Hw\})_{w \in \Z}$, subject to various
axioms.

Consider the reductive group $M = H/U$.  We identify $T_C$ with its image
in $M$, {\it viz.}, the identity component of the center of $M$.  $U$ acts
trivially in any irreducible representation of $H$, so the simple objects
in $\rg H$ can be identified with the irreducible representations of $M$. 
In any $s$-structure, every simple object is $s$-pure of some step.  Since
the categories $\rgl Hw$ and $\rgg Hw$ are stable under extensions, the
entire $s$-structure is determined by the steps of simple objects.

Consider first the set of $1$-dimensional representations of $H$.  The
semisimple group $H/R \cong M/T_C$ has a unique $1$-dimensional
representation (the trivial one), so in any two nonisomorphic
$1$-dimensional representations of $M$, $T_C$ must act by distinct
characters.  Thus, the set of $1$-dimensional representations can be
identified with a sublattice of $X(T_C)$, which we will denote $X(M)$.  We
claim that for any $\lambda \in X(T_C)$, some multiple of $\lambda$ lies in
$X(M)$.  There certainly exists some extension of $\lambda$ to a character
of a maximal torus of $M$, and hence there is some irreducible
$M$-representation $V$ in which $T_C$ acts by $\lambda$.  The top exterior
power $\bigwedge^{\dim V} V$ is a $1$-dimensional $M$-representation
contained in $\bigotimes^{\dim V} V$, so $T_C$ acts on it by the character
$(\dim V)\lambda$.  Thus, $(\dim V)\lambda \in X(M)$.

Given an $s$-structure, define a function $\phi: X(T_C) \to \Q$ by putting
\[
\phi(\lambda) = \step \lambda \qquad\text{for all $\lambda \in X(M)$.}
\]
Note that it suffices to define $\phi$ on $X(M)$ because every character in $X(T_C)$ has some multiple in $X(M)$.  Now, for any irreducible $M$-representation
$V$, if $\lambda \in X(T_C)$ is the character of $T_C$ on $V$, we have
$\step \bigwedge^{\dim V} V = (\dim V)\step V$ and $\phi(\bigwedge^{\dim V}
V) = (\dim V)\phi(\lambda)$, so it follows that
\begin{equation}\label{eqn:sstruc-cc}
\phi(\lambda) = \step V 
\qquad\text{if $\lambda$ is the character of $S$ on $V$.}
\end{equation}
In particular, $\phi$ takes values in $\Z$, so we may regard it as an element of $Y(T_C)$.  Since any $s$-structure is determined by the steps of simple objects, it is
clear from~\eqref{eqn:sstruc-cc} that distinct $s$-structures give rise to
distinct cocharacters $\phi \in Y(T_C)$.
\end{proof}

We can describe the image of $\Psi_C$ quite precisely.

\begin{defn}
A cocharacter $\phi \in Y(T_C)$ is said to be \emph{semifocused} if
$\phi(\upsilon) \le 0$ for all $\upsilon \in \Upsilon_C$.  It is
\emph{focused} if $\phi(\upsilon) < 0$ for all $\upsilon \in \Upsilon_C$.
\end{defn}

\begin{thm}
\label{thm:ssogo}
If $X$ and $G$ are schemes over an algebraically closed field, then a
cocharacter $\phi \in Y(T_C)$ is in the image of $\Psi_C$ if and only if it
is semifocused.
\end{thm}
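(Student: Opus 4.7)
The plan is to treat necessity and sufficiency separately, in both cases working inside $\rg H \cong \cg C$.  Throughout, for $V \in \rg H$ I write $V_\lambda$ for the $T_C$-weight space at $\lambda$, and for $\upsilon \in \Upsilon_C$ I write $V_\upsilon$ for a simple $M$-representation of $T_C$-character $\upsilon$ (which exists because the definition of $\Upsilon_C$ puts $V_\upsilon$ into $\fu$ as a composition factor).

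For necessity, suppose $\phi = \Psi_C(s)$, fix $\upsilon \in \Upsilon_C$, and assume for contradiction that $\phi(\upsilon) > 0$.  A Lyndon--Hochschild--Serre computation for $H = M \ltimes U$ identifies $\Ext^1_H(\Bbbk,V_\upsilon) \cong \Hom_M(\fu,V_\upsilon) \ne 0$, so one can choose a nonsplit extension
\[
0 \to V_\upsilon \to E \to \Bbbk \to 0.
\]
Since the extension is nonsplit, the socle of $E$ is $V_\upsilon$; inspecting composition factors then shows that $E$ is $s$-pure of step $\phi(\upsilon)$, while dually $E^*$ has socle $\Bbbk$ and is $s$-pure of step $0$.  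The tensor axioms of an $s$-structure force $E \otimes E^* \cong \operatorname{End}(E)$ to be $s$-pure of step $\phi(\upsilon)$.  But $\mathrm{id}_E \in \operatorname{End}_H(E)$ provides a trivial $H$-subrepresentation $\Bbbk\cdot\mathrm{id}_E \hookrightarrow \operatorname{End}(E)$ of step $0$, so the socle of $E \otimes E^*$ contains a factor of step $0 < \phi(\upsilon)$, contradicting $s$-purity.  (For those $\upsilon$ where $\Ext^1_H(\Bbbk,V_\upsilon)$ vanishes, one argues identically with a twisted extension $0 \to V_{\upsilon+\upsilon_2} \to E \to V_{\upsilon_2} \to 0$ for an auxiliary $\upsilon_2$; and the general $\upsilon \in \Upsilon_C$ reduces by additivity of $\phi$ to the $T_C$-weights of $\fu^{\mathrm{ab}}$, which Lie-generate $\fu$.)

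For sufficiency, given a semifocused $\phi$, define
\[
\cgl Cw = \{ V \in \rg H \mid \phi(\lambda) \le w \text{ for every $T_C$-weight $\lambda$ of } V \}.
\]
Each $\cgl Cw$ is visibly a Serre subcategory, and since $T_C$-weights add under $\otimes$ axioms (1) and (2) of an $s$-structure hold, with $\Psi_C$ of the resulting structure equal to $\phi$ by construction.  Axiom (3) is the heart of the argument and reduces to showing that the Hom-defined $\cgg Cw$ equals $\{V : \phi(\lambda) \ge w \text{ for every $T_C$-weight $\lambda$}\}$.  The key step is the following lemma: for any nonzero $V \in \rg H$, with $k_{\min}$ the minimum value of $\phi$ on the $T_C$-weights of $V$, the subspace $V^{(k_{\min})} = \bigoplus_{\phi(\lambda) = k_{\min}} V_\lambda$ is an $H$-subrepresentation of $V$.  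Stability under $T_C$ and the reductive Levi is automatic, and stability under $U$ is precisely where the semifocused hypothesis enters: the root group $U_\upsilon$ shifts $T_C$-weights by elements of $\N\Upsilon_C$, on which $\phi \le 0$, so the $U$-action cannot escape the minimum $\phi$-weight stratum.  The socle of $V^{(k_{\min})}$ is then a nonzero semisimple subobject of $V$ whose composition factors all have $\phi$-value $k_{\min}$, giving the required characterization of $\cgg Cw$, after which axiom (3) follows by the same additivity of weights.

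The principal obstacle is this sufficiency step: the Hom-defined $\cgg Cw$ is \emph{a priori} larger than the weight-based category, and without the semifocused hypothesis it genuinely is larger and axiom (3) fails.  The $H$-stability of the subspace $V^{(k_{\min})}$ encodes the geometric content of semifocusedness and is exactly what makes the construction pass axiom (3).
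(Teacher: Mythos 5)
Your sufficiency argument follows the paper's route: you show that for each $H$-module $V$ the span of the $T_C$-weight vectors with $\phi$-value $\le w$ is an $H$-submodule, deduce that the Hom-defined $\cgg Cw$ coincides with the weight-defined category, and then get the tensor axioms from additivity of weights. The one input you assert rather than prove is that the $H$-action (equivalently, the coaction $V \to V \otimes \cO_H$) shifts $T_C$-weights only by elements of $\N\Upsilon_C$. Your appeal to ``root groups $U_\upsilon$'' is a characteristic-zero picture; the theorem is stated over an arbitrary algebraically closed field, where $U$ need not be a product of root groups and $H$ need not admit a Levi decomposition. The paper devotes Lemma~\ref{lem:levi} (a $T_C$-equivariant splitting $K \cong K/U \times \fu$, obtained from a flat-cohomology torsor argument) and Corollary~\ref{cor:hopf} precisely to showing that the $T_C$-weights of $\cO_H$ lie in $-\N\Upsilon_C$ in arbitrary characteristic; you would need to supply this.

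The necessity half is where the genuine gap lies, and also where you depart from the paper. Your $\operatorname{End}(E)$ trick is correct and rather elegant: given a nonsplit extension $0 \to W \to E \to V \to 0$ of simples with $\step W > \step V$, the object $E$ is $s$-pure of step $\step W$, its dual $E^*$ is $s$-pure of step $-\step V$, the tensor axioms (2) and (3) for a single-orbit $s$-structure force $E \otimes E^*$ to be $s$-pure of step $\step W - \step V > 0$, and the identity endomorphism yields a trivial submodule of step $0$, a contradiction. (The paper instead folds necessity into the comodule computation.) The gap is in \emph{producing} such an extension from $\phi(\upsilon) > 0$. The identification $\Ext^1_H(\Bbbk, V_\upsilon) \cong \Hom_M(\fu, V_\upsilon)$ presupposes a Levi decomposition $H = M \ltimes U$, semisimplicity of $M$-representations (to collapse the Lyndon--Hochschild--Serre sequence), and $H^1(U,\Bbbk) \cong \fu^*$ --- each of which can fail in positive characteristic (already for $U = \bbG_a$ in characteristic $p$, the group $H^1(U,\Bbbk)$ is spanned by the additive polynomials $x^{p^i}$). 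Even in characteristic zero the right-hand side is $\Hom_M(\fu^{\mathrm{ab}}, V_\upsilon)$ rather than $\Hom_M(\fu, V_\upsilon)$, and may vanish for a given $\upsilon$; your parenthetical reduction to the weights of $\fu^{\mathrm{ab}}$, which generate $\Upsilon_C$ over $\N$, is the correct repair of that point. In arbitrary characteristic one should instead extract the nonsplit extension from a finite-dimensional submodule of $\cO_H$, whose weights are controlled by Corollary~\ref{cor:hopf}, or simply argue via the comodule map as the paper does. As written, your proof establishes the theorem only under the additional hypotheses that appear in Theorem~\ref{thm:char0-sstruc}, not in the stated generality of Theorem~\ref{thm:ssogo}.
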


Before proving this theorem, we need the following basic result.

\begin{lem}\label{lem:levi}
Let $K \subset H$ be a $T_C$-stable subgroup containing $U$.  Then there
is a $T_C$-equivariant isomorphism of varieties $K \cong K/U \times \fu$.
\end{lem}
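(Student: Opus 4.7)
The plan is to establish the isomorphism in two independent stages: first, construct a $T_C$-equivariant Levi-type decomposition $K \cong (K/U) \times U$ as varieties; second, produce a $T_C$-equivariant isomorphism of varieties $U \cong \fu$.

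For the first stage, observe that $U$ is normal in $K$ (being normal in $H$), and since $K/U$ embeds in the reductive group $M = H/U$, $U$ is in fact the unipotent radical of $K$. Mostow's theorem supplies a Levi complement $L \subset K$ to $U$, and any two such complements are $U$-conjugate; hence the variety of Levi complements to $U$ in $K$ is a $U$-torsor, carrying a $T_C$-action (via conjugation on $K$) compatible with the $T_C$-action on $U$. Because $T_C$ is linearly reductive while $U$ is unipotent, this torsor admits a $T_C$-fixed point: inductively peeling off a $T_C$-stable normal filtration of $U$ with $\bbG_a$-quotients and applying the vanishing of $H^1(T_C, \bbG_a(\chi))$ for every character $\chi$ reduces matters to the trivial case. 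This yields a $T_C$-stable Levi $L$, and the multiplication map $L \times U \to K$ is then a $T_C$-equivariant isomorphism of varieties (with $T_C$ acting diagonally by conjugation). Composing with the canonical isomorphism $L \cong K/U$ gives $K \cong (K/U) \times U$ as $T_C$-varieties.

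For the second stage, choose a $T_C$-stable descending central filtration $U = U_0 \supset U_1 \supset \cdots \supset U_n = 1$ by closed normal subgroups with one-dimensional vector-group quotients $U_i/U_{i+1} \cong \bbG_a$, on which $T_C$ acts by characters drawn from $-\Upsilon_C$. Iteratively splitting these $T_C$-equivariant extensions via the same $H^1$-vanishing produces a $T_C$-equivariant isomorphism of varieties $U \cong \fu$, matching each $U_i/U_{i+1}$ with the corresponding graded piece of $\fu$.

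Assembling the two stages yields the required $T_C$-equivariant isomorphism $K \cong K/U \times \fu$. The main obstacle is the equivariant Levi decomposition; the remainder of the argument is essentially formal, though in positive characteristic one must replace the classical exponential map by the inductive splitting procedure sketched above.
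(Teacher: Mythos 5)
There is a genuine gap in your first stage. You invoke Mostow's theorem to produce a Levi complement $L$ to $U$ in $K$ and then use $T_C$-linear reductivity only to make the complement $T_C$-stable. But Mostow's theorem is a characteristic-zero result: over an algebraically closed field of positive characteristic (which is the setting of this section --- algebraically closed, but with no restriction on the characteristic), a connected linear algebraic group need not admit any Levi decomposition at all, so the ``variety of Levi complements'' on which your fixed-point argument runs may be empty. Your closing remark that in positive characteristic one replaces the exponential by the inductive splitting does not repair this, because the inductive $H^1(T_C,\bbG_a(\chi))$-vanishing you describe only addresses the \emph{equivariance} of a complement whose existence you have already assumed. (A secondary slip: the claim that $U$ is the unipotent radical of $K$ because $K/U$ embeds in the reductive group $M=H/U$ is unjustified --- a subgroup of a reductive group, e.g.\ a Borel, need not be reductive --- though this is minor next to the characteristic issue.)

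The repair, which is the route the paper takes, is to abandon the search for a group-theoretic complement entirely and work only at the level of varieties: filter $U$ by $H$-normal, $T_C$-stable subgroups $1=U_0\subset\cdots\subset U_n=U$ with $U_i/U_{i-1}\cong\bbG_a$, and show that each projection $K/U_{i-1}\to K/U_i$ --- a $\bbG_a$-torsor --- admits a $T_C$-equivariant \emph{scheme-theoretic} section. The obstruction lives in $H^1_{\mathrm{flat}}([\,(K/U_i)/T_C\,];\bbG_a)$, which vanishes by the Leray spectral sequence: the geometric contribution dies because $K/U_i$ is affine ($U_i$ is normal in $H$), and the group-cohomology contribution dies because $T_C$-representations are semisimple. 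This gives $K\cong K/U\times\prod_i U_i/U_{i-1}$ equivariantly, and then the $\bbG_a$-exponential (valid in all characteristics) together with semisimplicity of $T_C$-representations identifies the product of the graded pieces with $\fu$. Your second stage is essentially this last step and is fine; it is the first stage that needs to be recast in this torsor-splitting form rather than as a Levi decomposition.
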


Note that in characteristic $0$, this lemma is straightforward: $K$ admits
a Levi decomposition $K \cong K/U \ltimes U$, and the exponential map
provides a $T_C$-equivariant isomorphism of varieties $\fu \to U$. 
Neither Levi decompositions nor the exponential map necessarily exist in
positive characteristic, however.

\begin{proof}
The structure theory of unipotent groups provides a filtration
\begin{equation}\label{eqn:unip-filt}
1 = U_0 \subset U_1 \subset \cdots \subset U_n = U
\end{equation}
with the following properties: (1) each $U_i$ is a normal subgroup of $H$,
and therefore of $K$ and of $U_{i+1}$; (2) each $U_i$ is stable under the
action of
$T_C$; and (3) each subquotient $U_{i+1}/U_i$ is isomorphic to $\bbG_a$. 
Note that as a consequence of (1), each of the schemes $K/U_i$ is affine.

Let us show that each projection $K/U_{i-1} \to K/U_i$ admits a
$T_C$-equivariant section.  It is convenient to use the language of
algebraic stacks: put $X_i = K/U_i$ and let $[X_i/T_C]$ denote the quotient
stack.  The map $[X_{i-1}/T_C] \to [X_i/T_C]$ is a $\bbG_a \cong
U_i/U_{i-1}$-torsor over $[X_i/T_C]$ in the flat topology.  To show that it
has a section it suffices to show that $H^1_{\text{flat}}([X_i/T_C];\bbG_a)
= 0$.  Note that because $\bbG_a$ is commutative, we have access to higher
cohomology groups and the machinery of spectral sequences.  In particular,
associated to the composition of maps
\[
[X_i/T_C] \to [\mathit{pt}/T_C] \to \mathit{pt},
\]
there is the Leray spectral sequence
\[
E_2^{pq} = H^p([\mathit{pt}/T_C];H^q_{\text{flat}}(X_i;\bbG_a))\implies
H^{p+q}_{\text{flat}}([X_i/T_C];\bbG_a).
\]
We have $H^q_{\text{flat}}(X_i;\bbG_a) \cong
H^q_{\text{Zar}}(X_i;\cO_{X_i})$, which vanishes for $q>0$ because $X_i$ is
affine.  Moreover, because the category of $T_C$-representations is
semisimple, the cohomology groups $H^p([\mathit{pt}/T_C];\cF)$ vanish for
$p>0$ and any coherent sheaf $\cF$ on the classifying stack
$[\mathit{pt}/T_C]$.  Thus we have the required vanishing of
$H^1_{\text{flat}}([X_i/T_C];\bbG_a)$, and every map $K/U_{i-1} \to K/U_i$
has a $T_C$-equivariant section.

It follows that there is a $T_C$-equivariant isomorphism $K/U_{i-1} \cong
K/U_i \times U_i/U_{i-1}$.  Now, consider the Lie algebra version of the
filtration~\eqref{eqn:unip-filt}:
\[
0 = \fu_0 \subset \fu_1 \subset \cdots \subset \fu_n = \fu.
\]
Each quotient $\fu_i/\fu_{i-1}$ may be identified with the Lie algebra of
$U_i/U_{i-1}$.  The exponential map makes sense for $\bbG_a$ in arbitrary
characteristic, and provides a $T_C$-equivariant isomorphism
$\fu_i/\fu_{i-1} \to U_i/U_{i-1}$.  Combining the isomorphisms
$K/U_{i-1} \cong K/U_i \times \fu_i/\fu_{i-1}$ for all $i$, we obtain
\[
K \cong K/U \times \fu_1 \times \fu_2/\fu_1 \times\cdots \times
\fu/\fu_{n-1}.
\]
Again using the fact that $T_C$-representations are semisimple, we see
that there is a $T_C$-equivariant isomorphism $\fu_1 \times \fu_2/\fu_1
\times\cdots \times
\fu/\fu_{n-1} \cong \fu$, as desired.
\end{proof}

Applying this lemma in the special cases $K = U$ and $K = H$, we obtain
the following result.

\begin{cor}\label{cor:hopf}
We have isomorphisms of $T_C$-representations $S(\fu^*) \cong \cO_U$ and
$\cO_H \cong \cO_{H/U} \otimes S(\fu^*)$.\qed
\end{cor}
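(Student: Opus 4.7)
The plan is to derive both isomorphisms as immediate consequences of Lemma~\ref{lem:levi} applied to two distinguished choices of $T_C$-stable subgroup $K$ containing $U$, namely $K = U$ and $K = H$ themselves. Both are manifestly $T_C$-stable (indeed $U$ is normal in $H$, and $H$ is all of $H$), so the hypotheses of the lemma are satisfied.

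First I would take $K = U$. Then $K/U$ is a point, and the lemma produces a $T_C$-equivariant isomorphism of varieties $U \cong \fu$. Passing to rings of regular functions gives $\cO_U \cong \cO_\fu = S(\fu^*)$ as $T_C$-representations, which is the first claim.

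Next I would take $K = H$, in which case the lemma gives a $T_C$-equivariant isomorphism $H \cong H/U \times \fu$. Since passage to rings of regular functions on a product of affine varieties turns products into tensor products, we obtain $\cO_H \cong \cO_{H/U} \otimes \cO_\fu \cong \cO_{H/U} \otimes S(\fu^*)$ as $T_C$-representations, which is the second claim.

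There is really no obstacle here beyond bookkeeping: all the substantive work (the inductive construction of a $T_C$-equivariant section using the filtration~\eqref{eqn:unip-filt}, the vanishing of $H^1_{\mathrm{flat}}([X_i/T_C];\bbG_a)$ via the Leray spectral sequence, and the assembly of the exponential maps on the successive subquotients $\fu_i/\fu_{i-1}$) has already been carried out in Lemma~\ref{lem:levi}. The corollary is tagged \texttt{\textbackslash qed} in the statement precisely because the proof is this two-line specialization; a finished write-up need only record the two choices of $K$ and cite the lemma.
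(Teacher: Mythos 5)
Your proposal is correct and is exactly the argument the paper intends: the sentence preceding the corollary reads ``Applying this lemma in the special cases $K = U$ and $K = H$, we obtain the following result,'' and the \verb|\qed| in the statement reflects that nothing more is needed. Your write-up just makes explicit the passage from the $T_C$-equivariant isomorphisms of affine varieties to the isomorphisms of $T_C$-representations on rings of regular functions.
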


Since $T_C$ acts trivially on $\cO_{H/U}$, the last part of this corollary
implies that $T_C$ acts with the same set of weights on $\cO_H$ and on
$S(\fu^*)$.

\begin{proof}[Proof of Theorem~\ref{thm:ssogo}]
Let $M$ be an $H$-module.  Note that the comodule structure map $\gamma_M: M \to M \otimes \cO_H$ is $H$-equivariant.  (This is easiest to see by identifying $M \otimes \cO_H$ with the vector space of regular functions $H \to M$.)  
In particular, this map is $T_C$-equivariant and preserves weights.

Define $\sigma_{\leq w} M$ to be the vector subspace of $M$ spanned by
those weight vectors whose wieght $\chi$ satisfies $\phi(\chi) \leq w$.  To say that $\phi$ defines an $s$-structure is
equivalent to saying $\sigma_{\leq w} M$ is an $H$-submodule of $M$.  If $m
\in M$ has weight $\chi$, then we may write $\gamma_M(m)$ as $\sum m_i
\otimes f_i$ where $f_i$ has weight $-\upsilon_i$ for some $\upsilon_i \in
\Upsilon$ and $m_i$ has weight $\chi +\upsilon_i$.  Thus,
$\gamma_M(\sigma_{\leq w} M) \subset \sigma_{\leq w} M \otimes \cO_H$ if and only if
$\phi(\upsilon) \leq 0$ for all $\upsilon \in \Upsilon$.
\end{proof}

We conclude this section with an Ext-vanishing result for certain
$s$-structures.

\begin{thm}\label{thm:char0-sstruc}
Suppose that $H$ has a Levi factor $M$ and that the category of
$M$-representations is semisimple.  Let $\phi$ be a semifocused
cocharacter, and let $(\{\cgl Cw\}, \{\cgg Cw\})_{w \in \Z}$ be the
corresponding $s$-structure.  The following conditions are equivalent:
\begin{enumerate}
\item The cocharacter $\phi$ is focused.
\item For any two simple objects $\cF, \cG \in \cg C$ that are both
$s$-pure of step $w$, we have $\Ext^1(\cF,\cG) = 0$.
\end{enumerate}
\end{thm}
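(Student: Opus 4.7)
The plan is to use the Hochschild--Serre spectral sequence for the Levi decomposition $H = M \ltimes U$ to reduce the computation of $\Ext^1$ in $\rg H$ to a question about $T_C$-weights, which can then be matched against the definition of focusedness.

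First I note that under $\cg C \simeq \rg H$, the simple objects are irreducible $H$-modules, each pulled back from an irreducible $M$-module (since $U$ is unipotent and so acts trivially). By Schur's lemma, because $T_C$ is central in $M$, each such module carries a unique $T_C$-weight $\lambda \in X(M)$, and by~\eqref{eqn:sstruc-cc} its step equals $\phi(\lambda)$; thus $V_1, V_2$ pure of step $w$ means $\phi(\lambda_1) = \phi(\lambda_2) = w$. Applying Hochschild--Serre for $U \triangleleft H$ to the $H$-module $\Hom_\Bbbk(V_1, V_2)$, the semisimplicity of $\rg M$ kills $H^p(M, -)$ for $p > 0$ and the trivial $U$-action on $\Hom_\Bbbk(V_1, V_2)$ yields
\[
\Ext^1_H(V_1, V_2) \;=\; \bigl(H^1(U, \Bbbk) \otimes \Hom_\Bbbk(V_1, V_2)\bigr)^M.
\]
Every algebraic homomorphism $U \to \bbG_a$ factors through the abelianization and is detected by its differential, a linear functional on $\fu/[\fu, \fu]$; consequently the $T_C$-weights of $H^1(U, \Bbbk)$ all lie in $-\Upsilon_C^{\mathrm{ab}}$, where $\Upsilon_C^{\mathrm{ab}} \subset \Upsilon_C$ denotes the set of $T_C$-weights occurring in $\fu/[\fu, \fu]$. (In positive characteristic one also picks up Frobenius twists of these, which is harmless for the $\phi$-sign bookkeeping below.)

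For (1) $\Rightarrow$ (2): an $M$-invariant is in particular $T_C$-invariant, so some weight $-\upsilon$ of $H^1(U, \Bbbk)$ must cancel the weight $\lambda_2 - \lambda_1$ of $\Hom_\Bbbk(V_1, V_2)$, forcing $\upsilon \in \Upsilon_C^{\mathrm{ab}}$ with $\phi(\upsilon) = \phi(\lambda_2) - \phi(\lambda_1) = 0$. But $\phi$ focused means $\phi < 0$ on every element of $\Upsilon_C$, a contradiction, so $\Ext^1_H(V_1, V_2) = 0$. For (2) $\Rightarrow$ (1) I argue the contrapositive. Assume $\phi(\upsilon_0) = 0$ for some $\upsilon_0 \in \Upsilon_C$; the key lemma is that such a $\upsilon$ can be chosen from $\Upsilon_C^{\mathrm{ab}}$. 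Indeed, if $\fu_{\upsilon_0} \subset [\fu, \fu]$, then any nonzero weight vector there is a sum of brackets $[X, Y]$ of weight vectors of weights $\upsilon', \upsilon'' \in \Upsilon_C$ with $\upsilon' + \upsilon'' = \upsilon_0$; semifocusedness forces $\phi(\upsilon'), \phi(\upsilon'') \le 0$ with sum $0$, hence both vanish, and at least one of $\upsilon', \upsilon''$ sits at an earlier stage of the lower central series. Since $\fu$ is nilpotent, iterating produces a weight $\upsilon \in \Upsilon_C^{\mathrm{ab}}$ with $\phi(\upsilon) = 0$. Given such a $\upsilon$, pick a simple $M$-subrepresentation $W \subset (\fu/[\fu, \fu])_\upsilon$ (nonzero by semisimplicity), any simple $M$-module $V_2$, and any simple $M$-summand $V_1$ of $W^* \otimes V_2$; view $V_1, V_2$ as simple $H$-modules by inflation. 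Their $T_C$-weights differ by $\upsilon$, so they share the common step $w$, and the inclusion $W^* \hookrightarrow H^1(U, \Bbbk)$ together with the nonzero $M$-map $V_1 \hookrightarrow W^* \otimes V_2$ exhibits a nonzero class in $\Ext^1_H(V_1, V_2)$.

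The hard part is the combinatorial lemma extracting an element of $\Upsilon_C^{\mathrm{ab}}$ with $\phi$-value zero from an arbitrary element of $\Upsilon_C$ with that property: a priori such a weight could live entirely inside $[\fu, \fu]$, and the crucial inputs are the additivity of $\phi$ and the equality case of semifocusedness, which together let one walk the weight up the lower central series step by step until it lands in $\fu/[\fu,\fu]$. Once this lemma is in hand, the remainder of both directions is a routine exercise in weight bookkeeping against the Hochschild--Serre formula.
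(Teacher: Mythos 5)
Your argument takes a genuinely different route from the paper's, and in characteristic $0$ it works: the paper proves $(1)\Rightarrow(2)$ by splitting the extension directly with the comodule map $\gamma_V: V \to V\otimes \cO_U$ (weight bookkeeping shows the $M$-complement of $V_2$ is automatically $U$-stable), and proves $(2)\Rightarrow(1)$ by observing that $\Sl 0\cO_H$ is a Hopf subalgebra, so that $H$ has a non-reductive quotient $H' = \Spec \Sl 0\cO_H$, a faithful representation of which supplies the non-split extension between simples of equal step. Your Hochschild--Serre reduction $\Ext^1_H(V_1,V_2) \cong (H^1(U,\Bbbk)\otimes\Hom_\Bbbk(V_1,V_2))^M$ is valid under the stated hypotheses, and your forward direction is sound in all characteristics, since it only needs an upper bound on the weights of $H^1(U,\Bbbk) = \Hom_{\mathrm{gp}}(U,\bbG_a) \subset \cO_U$, which Corollary~\ref{cor:hopf} supplies ($T_C$ acts on $\cO_U$ with weights in $-\N\Upsilon_C$, and focusedness forces $\phi>0$ on every nonzero such weight).

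The gap is in $(2)\Rightarrow(1)$ in positive characteristic, which the theorem is explicitly meant to cover (the paper notes the hypotheses hold for solvable $H$ in any characteristic, and Lemma~\ref{lem:levi} exists precisely to handle this case). Your construction of a nonzero class hinges on the inclusion $W^* \hookrightarrow H^1(U,\Bbbk)$ for a chosen $M$-subrepresentation $W \subset (\fu/[\fu,\fu])_\upsilon$. In characteristic $0$ this is the isomorphism $\Hom(U,\bbG_a) \cong (\fu/[\fu,\fu])^*$ via the exponential; in characteristic $p$ the commutative quotient $U/[U,U]$ need not be a vector group (Witt-vector phenomena), and the differential $\Hom(U,\bbG_a) \to \fu^*$ need be neither injective nor surjective onto $([\fu,\fu])^\perp$. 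So the weight $-\upsilon$ you produce by your lower-central-series descent is a priori only a weight of $\fu^*$, not necessarily of $H^1(U,\Bbbk)$, and the construction of a nonzero extension class does not go through as written. (Your descent lemma itself is fine in substance, though it needs a little care to terminate when $0\in\Upsilon_C$; note the paper avoids it entirely by working with $\cO_H$ rather than its primitives.) The paper's detour through the quotient group $H'$ is exactly what circumvents any computation of $H^1(U,\Bbbk)$: non-reductivity of $H'$ is witnessed by \emph{any} faithful representation, and the equality $\phi = 0$ on all weights of $\cO_{H'}$ then forces the two composition factors to have the same step. To salvage your approach in characteristic $p$ you would need an additional argument that the relevant weight space of $\fu/[\fu,\fu]$ is actually detected by additive characters of $U$ (for instance via the $T_C$-equivariant filtration of Lemma~\ref{lem:levi}), or else you should state the result only in characteristic $0$.
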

Note that the conditions on $H$ always hold in characteristic $0$, and
they always hold in arbitrary characteristic when $H$ is solvable.
\begin{proof}
Suppose $\rg H$ carries an $s$-structure such that the
corresponding cocharacter $\phi$ is focused.  Let $V_1, V_2 \in \rg H$ be
simple objects that are both $s$-pure of step $w$.  Suppose we have a short
exact sequence
\begin{equation}\label{eqn:foc-ses}
0 \to V_2 \to V \to V_1 \to 0.
\end{equation}
As a sequence of $M$-representations, this sequence splits, and we can find
a subspace $V_1' \subset V$ that is isomorphic as an $M$-representation to
$V_1$.  Let us show that $V_1'$ is an $H$-submodule of $V$.  It suffices to
show that $V_1'$ is stable under multiplication by $U$; as in the proof of
Theorem~\ref{thm:ssogo}, this follows from the fact that the comodule
structure map $\gamma_V: V \to V \otimes \cO_U$
preserves weights.  Indeed, if $v \in V_1'$, then write $\gamma_V(v)$ as
$\sum v_i \otimes u_i$, where $v_i \in V$ and $u_i \in \cO_U$
are weight vectors for $T_C$ of weights $\chi_i$ and $\upsilon_i$,
respectively.  Since $\phi$ is focused, and $\upsilon_i \in -\N\Upsilon_C
$, we must have $\phi(\chi_i) < w$
unless $\upsilon_i = 0$.  The latter condition only holds when $u_i$ is a
constant; thus $v_i$ cannot lie in $V_2$. Thus the
sequence~\eqref{eqn:foc-ses} splits.

Conversely, suppose $\phi$ is semifocused but not focused.  Then
$\Sg 0\fu \ne 0$.  By a slight abuse of notation, let us denote by
$\Sl 0\cO_H$ and $\Sl 0S(\fu^*)$ the subspaces of $\cO_H$
and $S(\fu^*)$, respectively, spanned by all $T_C$-weight spaces whose
weight $\chi$ satisfies $\phi(\chi) \le 0$.  (This notation is an abuse
because $\cO_H$ and $S(\fu^*)$ are infinite-dimensional and therefore not
objects of $\rg H$.)  It follows from Corollary~\ref{cor:hopf} that
$\Sl 0\cO_H \cong \cO_{H/U} \otimes \Sl 0S(\fu^*)$.  
Now, identify the dual space $(\Sg 0\fu)^*$ with a subspace of
$\fu^*$.  Since the weights occuring in $S(\fu^*)$ are linear
combinations with nonpositive coefficients of the weights in $\Upsilon_C$,
it is easy to see that $\Sl 0S(\fu^*) \cong S((\Sg 0\fu)^*)$.

We claim that $\Sl 0\cO_H$ is a Hopf subalgebra of $\cO_H$. 
Indeed, the multiplication map $\cO_H \otimes \cO_H \to \cO_H$, the
comultiplication map $\cO_H \to \cO_H \otimes \cO_H$, and the antipode
(inverse) map $\cO_H \to \cO_H$ are all $H$- and therefore
$T_C$-equivariant, so the restrictions of these maps to $\Sl 0\cO_H$
endow that space with the structure of a Hopf algebra.  Thus, $H' = \Spec
\Sl 0\cO_H$ is an affine algebraic group over $\Bbbk$, and the
inclusion $\Sl 0\cO_H \hto \cO_H$ corresponds to a surjective
group homomorphism $H \to H'$.  Note that $H'$ cannot be reductive: the
largest reductive quotient of $H$ is $H/U$, but since
$\cO_{H/U}$ can be identified with a subalgebra of $\Sl 0\cO_H$,
the group $H/U$ is a nontrivial quotient of $H'$.

Let $U'$ be the unipotent radical of $H'$.  Because the quotient map $H
\cong M \ltimes U \to H/U \cong M$ factors through $H'$, the latter group
inherits a Levi decomposition with the same Levi factor: we have $H' \cong
M \ltimes U'$.  Now, find a faithful representation
of $H'$ on some vector
space $V$.  Such a representation is not semisimple: the space $V^{U'}$ of
$U'$-fixed vectors (which is not all of $V$ because $U' \ne 1$) is an
$H'$-invariant subspace with no $H'$-invariant complement.  $V^{U'}$ does,
of course, admit an $M$-invariant complement; let $V_1$ be an irreducible
$M$-representation in that complement, and suppose it is $s$-pure of step
$w$.
Let $V'$ be the smallest $H$-stable subspace containing $V_1$, and find a
filtration
\[
0 = W_0 \subset W_1 \subset \cdots \subset W_n = V'
\]
such that $W_i/W_{i-1}$ is simple for each $i$.  Since $V_1$ is not
contained in any proper submodule of $V'$, we must have $W_n/W_{n-1}
\cong V_1$.  Moreover, since $V' \ne V_1$, we know that $n \ge 2$.  Let $W
= V'/W_{n-2}$, and let $W' = W_{n-1}/W_{n-2} \subset W$.  We then have a
short exact sequence
\[
0 \to W' \to W \to V_1 \to 0.
\]
This sequence cannot split: if $W$ contained an $H$-stable subspace
isomorphic to $V_1$, its preimage in $V'$ would be a proper
$H$-stable subspace of $V'$ containing $V_1$.  Thus, $\Ext^1(V_1, W') \ne
0$.  To finish the proof of the theorem, it remains only to show that
$\step W' = w$.  As usual, there is an $M$-stable subspace $V'_1
\subset W$ that is isomorphic to $V_1$ as an $M$-representation.  Moreover,
there is some vector $v \in V'_1$ whose
image under the comodule map $\gamma_W: W \to W \otimes \cO_{H'}$ is not
contained in $V'_1 \otimes \cO_{H'}$.  That is, if we write $\gamma_W(v)$
in the form $\sum v_i \otimes u_i$, where all the $v_i \in W$ and all the
$u_i \in \cO_{H'}$ are weight vectors, say of weights $\chi_i$ and
$\upsilon_i$, respectively, there is at least one nonzero term
with $v_i \notin V_1$, and therefore $v_i \in W'$.  Now, $\phi(\upsilon_i)
= 0$ by the construction of $H'$, so it follows that $\phi(v_i) = w$, and
hence that $\step W' = w$.  Thus, we have exhibited a pair of simple
objects $V_1, W'
\in \rg H$, both $s$-pure of step $w$, such that $\Ext^1(V_1,W') \ne 0$.
\end{proof}

%%%%%%%%%%%%%%%%%%%%%%%%%%%%%%%%%%%%%%%%%%%%%%%%%%%%%%%%%%%%%%%%%%%%%%%%%%%
\section{Higher Ext-Vanishing over a Closed Orbit}
\label{sect:ext-van}
%%%%%%%%%%%%%%%%%%%%%%%%%%%%%%%%%%%%%%%%%%%%%%%%%%%%%%%%%%%%%%%%%%%%%%%%%%%

Consider the following condition on an $s$-structure:

\begin{defn}
An $s$-structure is \emph{split} if for every orbit $C \in \orb X$, and any two simple objects $\cF, \cG \in \cg C$ that are both $s$-pure of step $v$, we have $\Ext^1(\cF,\cG) = 0$.
\end{defn}

For the remainder of the paper, we assume that the fixed $s$-structure on $X$ is both recessed and split.  Theorem~\ref{thm:char0-sstruc} gives a useful criterion for an $s$-structure to be split.

For a closed subspace $Z \subset X$, define
\[
\csupp XZ_{\ge w} = \{ \cF \in \cgg Xw \mid \text{$\cF$ is supported set-theoretically on $Z$} \}.
\]
The main result of this section is the following Ext-vanishing result, which will be an important tool in the proofs of both decomposition theorems.

\begin{prop}\label{prop:ext-closed}
Let $C \subset X$ be a closed orbit, and let $\cF \in \cg X$ be such that $i_C^*\cF \in \cgl Cw$.  For any sheaf
$\cG \in \csupp XC_{\ge v}$, we have $\Ext^k(\cF, \cG) = 0$ for all $k > w
- v$.
\end{prop}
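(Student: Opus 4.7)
The strategy is a devissage on $\cG$ that reduces the problem to the scheme-theoretically supported case, followed by an adjunction transferring the Ext to the orbit $C$ and a spectral sequence argument.

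First, I would reduce to the case where $\cG = i_{C*}\cG'$ is scheme-theoretically supported on $C$, with $\cG' \in \cgg Cv$. Since $\cG$ is coherent and set-theoretically supported on $C$, the ascending filtration $\cG^{(k)} := \ker(\cI_C^k\colon\cG\to\cG)$ terminates in finitely many steps. Each $\cG^{(k)}$ is a subobject of $\cG \in \cgg Xv$, hence lies in $\cgg Xv$; each subquotient $\cG^{(k)}/\cG^{(k-1)}$ is annihilated by $\cI_C$, so is of the form $i_{C*}\cH_k$ for some $\cH_k \in \cg C$; and the recessed hypothesis, together with the fact that $i_{C*}$ matches the $\cgg$-condition between $C$ and $X$ (the $s$-structure analogue of Lemma~\ref{lem:baric-res}(4)), forces $\cH_k \in \cgg Cv$. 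The long exact sequence of $\Ext$ then reduces the claim to the case $\cG = i_{C*}\cG'$ with $\cG' \in \cgg Cv$.

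In that case, derived adjunction gives
\[
\Ext^k_X(\cF, i_{C*}\cG') \cong \Ext^k_C(Li_C^*\cF, \cG'),
\]
and the hypercohomology spectral sequence
\[
E_2^{p,q} = \Ext^p_C(\cH_q, \cG') \Longrightarrow \Ext^{p+q}_C(Li_C^*\cF, \cG'),
\]
with $\cH_q := h^{-q}(Li_C^*\cF)$, reduces the proof to showing $E_2^{p,q} = 0$ whenever $p+q > w-v$. I would establish this via two ingredients. First, a step bound $\cH_q \in \cgl C{w-q}$ for all $q \ge 0$: the case $q = 0$ is the hypothesis, while for $q \ge 1$ the recessed condition $i_C^*\cI_C|_C \in \cgl C{-1}$ combined with a Koszul-type argument on powers of $\cI_C$ acting on $\cF$ yields the bound. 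Second, an $\Ext$-vanishing statement internal to the single orbit $C$: for $\cM \in \cgl Cm$ and $\cN \in \cgg Cv$, one has $\Ext^p_C(\cM, \cN) = 0$ for $p > m - v$. This orbit-level statement follows by devissage to simple $s$-pure constituents together with the split hypothesis, worked inside the equivalent category $\rg H$ of representations of the point stabilizer $H$, making use of the focused-cocharacter characterization of Theorem~\ref{thm:char0-sstruc}. Combining the two ingredients, $E_2^{p,q} = 0$ whenever $p > (w-q) - v$, i.e., whenever $p + q > w - v$, as required.

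The main obstacle will be the orbit-level $\Ext$-vanishing: the split hypothesis bounds only $\Ext^1$ between same-step simples, and boosting the bound to all $\Ext^p$ requires an inductive devissage exploiting the decomposition $H = M \ltimes U$ from Lemma~\ref{lem:levi} and the weight-filtration description of the $s$-structure on $C$. By comparison, the Koszul-type bound on $\cH_q$ is routine, though it requires some care when the closed immersion $i_C$ is not a regular immersion.
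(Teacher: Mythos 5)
Your architecture---d\'evissage of $\cG$ to the scheme-theoretically supported case, adjunction to move the computation to $C$, and a hypercohomology spectral sequence fed by (i) a step bound on $h^{-q}(Li_C^*\cF)$ and (ii) an orbit-level $\Ext$-vanishing---is genuinely different from the paper's argument, which instead builds a quasi-coherent injective resolution $\cG \to \cI^\bullet$ with $\cI^k \in \qsupp XC_{\ge v+k}$ (Proposition~\ref{prop:inj-resoln}) and observes that $\Hom(\cF,\cI^k)=0$ once $v+k>w$.  The problem is ingredient~(i).  The bound $h^{-q}(Li_C^*\cF)\in\cgl C{w-q}$ is exactly (the $Z=C$ case of) Lemma~\ref{lem:pullback-closed}, and in the paper that lemma is \emph{deduced from} Proposition~\ref{prop:ext-closed}; to use it as an input you must supply an independent proof, and the ``Koszul-type argument on powers of $\cI_C$'' does not supply one.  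A closed orbit is essentially never regularly immersed in $X$, and for a non-regular immersion the Koszul complex on generators of $\cI_C$ neither computes nor bounds the higher cohomology of $Li_C^*$.  Concretely, take $X=\Spec\Bbbk[x]/(x^2)$ with $\bbG_m$ scaling $x$ (so $x$ has weight $-1$ and the $s$-structure with $\step$ equal to the weight is recessed), $C$ the reduced point, and $\cF=i_{C*}\cO_C$, so $w=0$.  The Koszul complex of $\cI_C=(x)$ is $\cO_X\xrightarrow{\,x\,}\cO_X$ and has no homology in degrees $\ge 2$, yet $h^{-q}(Li_C^*\cF)\ne 0$ for every $q\ge 0$.  (The correct bound $h^{-q}(Li_C^*\cF)\in\cgl C{-q}$ does hold here, but one sees it from the minimal free resolution $\cdots\to\cO_X\xrightarrow{x}\cO_X\to\cO_C$; the existence of step-controlled resolutions with $\ker\subset\cI_C\cP$ and $Li_C^*$-acyclic terms in the general equivariant setting is precisely the nontrivial content being assumed, not a routine matter of ``care with non-regular immersions.'')

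Ingredient~(ii) is correct but also not free.  You cannot invoke Theorem~\ref{thm:char0-sstruc} to replace ``split'' by ``focused'': that equivalence requires a Levi factor with semisimple representation category, which is not among the standing hypotheses, and the split condition by itself only controls $\Ext^1$ between same-step simples.  Promoting this to $\Ext^p_C(\cM,\cN)=0$ for $\cM\in\cgl Cm$, $\cN\in\cgg Cv$, $p>m-v$ requires constructing injective resolutions in the ind-category of $\rg H$ whose $k$-th term has step $\ge v+k$---which is exactly the single-orbit case of Lemma~\ref{lem:ext-closed1} and Proposition~\ref{prop:inj-resoln}.  So even after repairing~(i), your route reproduces the paper's injective-resolution machinery on $C$ while additionally demanding a direct proof of the $Li_C^*$ step bound that the paper obtains afterwards as a corollary.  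I would recommend reversing the flow: prove the $\Ext$-vanishing first by resolving the \emph{target} $\cG$, and then derive the Tor bound from it.
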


We begin by proving a very special case of this result.

\begin{lem}\label{lem:ext-closed1}
Let $C \subset X$ be a closed orbit, and suppose $\cF \in \cg C$ is simple and $s$-pure of step $w$.  For any sheaf
$\cG \in \csupp XC_{\ge w}$, we have $\Ext^1(i_*\cF, \cG) = 0$.
\end{lem}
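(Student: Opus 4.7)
The plan is to show that every extension $0 \to \cG \to \mathcal{E} \to i_*\cF \to 0$ in $\cg X$ splits, arguing by noetherian induction on the nilpotent order $N$, defined as the smallest positive integer such that $\cI_C^N \cG = 0$ (which exists because $\cG$ is set-theoretically supported on the closed orbit $C$).

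For the inductive step with $N \ge 2$, I would apply $\Hom(i_*\cF, -)$ to the short exact sequence
\[
0 \to \cI_C\cG \to \cG \to \cG/\cI_C\cG \to 0.
\]
The subsheaf $\cI_C\cG$ lies in $\cgg Xw$ (since $\cgg Xw$ is closed under subobjects), remains set-theoretically supported on $C$, and is annihilated by $\cI_C^{N-1}$, so inductively $\Ext^1(i_*\cF, \cI_C\cG) = 0$.  The quotient $\cG/\cI_C\cG \cong i_*(i^*\cG)$ is the pushforward of an $\cO_C$-module, so the long exact sequence reduces the problem to the base case, provided one can show $i^*\cG \in \cgg Cw$.

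For the base case $N = 1$, write $\cG = i_*\tilde\cG$ with $\tilde\cG \in \cgg Cw$.  Using the adjunction $\Ext^k_X(i_*\cF, i_*\tilde\cG) \cong \Hom_{D(C)}(\cF, Ri^!i_*\tilde\cG[k])$ together with the Koszul/Tor description of $Ri^!i_*$ at the closed immersion (in the model of a regularly embedded $C$ of codimension $c$, the standard spectral sequence with $E_2^{p,q} = \Ext^p_C(\cF, \tilde\cG \otimes \Lambda^q \mathcal{N})$, where $\mathcal{N} = (\cI_C/\cI_C^2)^\vee$ is the normal bundle), the group $\Ext^1_X(i_*\cF, i_*\tilde\cG)$ is assembled from two contributions: $\Ext^1_C(\cF, \tilde\cG)$ and $\Hom_C(\cF, \tilde\cG \otimes \mathcal{N})$.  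Both vanish.  The first does so by the split hypothesis, extended from simples of the same step to all of $\cgg Cw$ by a length induction inside the Serre subcategory of step-$w$ objects (and invoking Theorem~\ref{thm:char0-sstruc}'s focused-implies-split criterion to handle the remaining simples of step $>w$); the second does so by $s$-structure orthogonality, since dualizing the recessed hypothesis $i_C^*\cI_C|_C \in \cgl C{-1}$ gives $\mathcal{N} \in \cgg C1$, whence $\tilde\cG \otimes \mathcal{N} \in \cgg C{w+1}$ by the tensor axiom, and $\cF \in \cgl Cw$ admits no nonzero maps into $\cgg C{w+1}$.

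The main obstacle is verifying the property $i^*\cG \in \cgg Cw$ used in the inductive step, since $\cgg Xw$ is not in general closed under quotients.  My approach would exploit the recessed structure and the tensor axiom: each graded piece $\cI_C^k\cG/\cI_C^{k+1}\cG$ of the $\cI_C$-adic filtration of $\cG$ is a quotient of $(\cI_C^k/\cI_C^{k+1}) \otimes_{\cO_C} i^*\cG$, with $\cI_C^k/\cI_C^{k+1} \in \cgl C{-k}$ inherited from the recessed condition.  Combined with the constraint that the whole filtered object $\cG$ sits in $\cgg Xw$, a careful bookkeeping argument should force the top quotient $i^*\cG$ into $\cgg Cw$.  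Making this step precise, and adapting the Koszul description of $Ri^!i_*$ to non-regularly-embedded closed orbits, are the most technically delicate portions of the argument.
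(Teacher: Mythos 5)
Your skeleton---d\'evissage along the $\cI_C$-adic filtration of $\cG$, followed by a base case on the orbit combining the recessed condition with the split hypothesis---is also the paper's skeleton, but both of the issues you flag as ``delicate'' are genuine gaps, and the paper sidesteps each one rather than confronting it. First, the paper begins with a reduction you omit: applying $\Hom(i_*\cF,-)$ to $0 \to \Sl w\cG \to \cG \to \Sg{w+1}\cG \to 0$ and using Axiom~(S10) to kill $\Ext^1(i_*\cF,\Sg{w+1}\cG)$, one may replace $\cG$ by $\Sl w\cG$ and assume $\cG$ is $s$-pure of step $w$. After that, the entire induction takes place inside $\cgl Xw \cap \csupp XC_{\ge w}$, which is a Serre subcategory ($\cgl Xw$ by axiom, and $\csupp XC_{\ge w}$ by the proof of \cite[Proposition~10.1]{a} since $C$ is a single closed orbit), so $\cI_C\cG$ and $\cG/\cI_C\cG$ automatically remain in it and the question of whether $i^*\cG \in \cgg Cw$ never arises. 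Your proposed ``bookkeeping'' is unlikely to close this gap as stated: the recessed condition and the tensor axiom bound the graded pieces $\cI_C^k\cG/\cI_C^{k+1}\cG$ only from \emph{above}, whereas what you need is a \emph{lower} bound (membership in $\cgg Xw$) on the top quotient, and without the upper bound $\cG \in \cgl Xw$ you cannot even control $\cI_C\cG$. The cleanest repair is precisely the paper's initial truncation.

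Second, the base case. The Koszul description of $Ri^!i_*$ presupposes that $C$ is regularly embedded in $X$, which is not among the hypotheses ($X$ may be nonreduced and orbit closures singular); you flag this but do not resolve it. It is repairable---use the adjunction in the other variable, $\Ext^1_X(i_*\cF,i_*\cG') \cong \Hom_{\Db C}(Li^*i_*\cF,\cG'[1])$, whose only contributions in degree $1$ are $\Ext^1_C(\cF,\cG')$ and $\Hom_C(h^{-1}(Li^*i_*\cF),\cG')$, where $h^{-1}(Li^*i_*\cF) \cong \cF\otimes i^*\cI_C \in \cgl C{w-1}$---but the paper's argument is more elementary and avoids all derived functors: a hypothetical nonsplit extension $\cH$ of $i_*\cF$ by $i_*\cG'$ is $s$-pure of step $w$; if $\cI_C\cH \ne 0$, then $\cI_C\cH$ is a subsheaf of $i_*\cG'$ lying in $\cgl X{w-1}$ (recessed condition plus tensor axiom), contradicting $s$-purity of $i_*\cG'$; and if $\cI_C\cH = 0$, the extension is pushed forward from $\cg C$ and splits by the split hypothesis. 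Finally, your appeal to Theorem~\ref{thm:char0-sstruc} to handle simples of step $>w$ is misplaced: that theorem concerns $\Ext^1$ between simples of the \emph{same} step, and the vanishing of $\Ext^1_C(\cF,\cdot)$ against objects of $\cgg C{w+1}$ is ordinary $s$-structure orthogonality, not a consequence of the focused/split criterion.
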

\begin{proof}
Consider the exact sequence
\[
\Hom(i_*\cF, \Sg {w+1}\cG) \to \Ext^1(i_*\cF, \Sl w\cG) \to
\Ext^1(i_*\cF,\cG) \to \Ext^1(i_*\cF, \Sg {w+1}\cG).
\]
The first term clearly vanishes, and the last term vanishes by Axiom~(S10) in the definition of an $s$-structure~\cite{a}.  Thus, the middle two terms are isomorphic.  To prove that $\Ext^1(i_*\cF,\cG) = 0$, we may replace $\cG$ by $\Sl w\cG$, and assume without loss of generality that $\cG$ is $s$-pure of step $w$.

Now, to every sheaf $\cG \in \csupp XC_{\ge w}$, we associate an invariant $\ell(\cG)$, defined to be the smallest integer $n$ such that $\cI_C^n\cG = 0$.  (See the proof of~\cite[Proposition~4.1]{a} for details.)  In the exact sequence
\[
0 \to \cI_C\cG \to \cG \to \cG/\cI_C\cG \to 0,
\]
we have $\ell(\cI_C\cG) = \ell(\cG) - 1$ and $\ell(\cG/\cI\cG) = 1$.  Now, $\cgl Xw$ is a Serre subcategory of $\cg X$, and because $C$ is a single closed orbit, $\csupp XC_{\ge w}$ is as well, as shown in the proof of~\cite[Proposition~10.1]{a}.  Thus, $\cI\cG$ and $\cG/\cI\cG$ are both also objects of $\csupp XC_{\ge w}$ that are $s$-pure of step $w$.

Consider the exact sequence
\[
\Ext^1(i_*\cF, \cI_C\cG) \to \Ext^1(i_*\cF,\cG) \to \Ext^1(i_*\cF,\cG/\cI\cG).
\]
If the first and last terms are known to vanish, the middle one must vanish
as well.  Thus, by induction on $\ell(\cG)$, we can reduce to the case
where $\ell(\cG) = 1$, {\it i.e.}, $\cI_C\cG = 0$.  In that case, there
must be a sheaf $\cG' \in \cg C$, $s$-pure of step $w$, such that $\cG
\cong i_*\cG'$.

If $\Ext^1(i_*\cF,i_*\cG') \ne 0$, then there is a nonsplit short exact
sequence
\begin{equation}\label{eqn:nonsplit}
0 \to i_*\cG' \to \cH \to i_*\cF \to 0.
\end{equation}
Note that $\cH$ is necessarily also $s$-pure of step $w$.  If $\ell(\cH) =
1$, then $\cH \cong i_*\cH'$ for some $\cH' \in \cg C$, and the entire
short exact sequence is the push-forward of the short exact sequence
\[
0 \to \cG' \to \cH' \to \cF \to 0
\]
in $\cg C$.  But $\Ext^1(\cF,\cG') = 0$ because the $s$-structure is
split, so this sequence splits, as does the one in~\eqref{eqn:nonsplit}. 
Thus, $\Ext^1(i_*\cF, i_*\cG') = 0$.

On the other hand, if $\ell(\cH) > 1$, then $\cI_C\cH \ne 0$.  Since
$\cI_C(i_*\cF) = 0$, $\cI_C\cH$ must be contained in the kernel of the map
$\cH \to i_*\cF$, so $\cI_C\cH$ can be identified with a subsheaf of
$i_*\cG'$.  That also implies that $i_*i^*\cI_C\cH \cong \cI_C\cH$.  Now,
$i^*\cI_C\cH$ is a quotient of $i^*\cI_C \otimes i^*\cH$.  Since $i^*\cI_C
\in \cgl C{-1}$ by assumption, and $i^*\cH \in \cgl Cw$, we conclude that
$\cI_C\cH \in \cgl X{w-1}$.  But that is a contradiction: $i_*\cG'$ is
$s$-pure of step $w$ and contains no nonzero subsheaf in $\cgl X{w-1}$. 
\end{proof}

To prove Proposition~\ref{prop:ext-closed}, we will carry out an Ext-group calculation using certain injective resolutions in the category of quasicoherent sheaves.  Let $\qg X$ denote the category of $G$-equivariant quasicoherent sheaves, and for any closed set $Z \subset X$, let
\[
\qsupp XZ_{\ge w} = \left\{ \cF \in \qg X \,\Big|
\begin{array}{c}
\text{every coherent subsheaf of}\\
\text{$\cF$ is in $\csupp XZ_{\ge w}$}
\end{array} \right\}.
\]

\begin{prop}\label{prop:inj-resoln}
Let $C \subset X$ be a closed orbit.  Every sheaf $\cF \in \qsupp XC_{\ge
w}$ admits an injective resolution
\[
 0 \to \cF \to \cI^0 \to \cI^1 \to \cdots
\]
with $\cI^k \in \qsupp XC_{\ge w+k}$.
\end{prop}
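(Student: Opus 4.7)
My plan is to build the resolution inductively by proving the following \emph{step lemma}: for any $\cF \in \qsupp XC_{\ge w}$, there exist an injective $\cI \in \qg X$ and an embedding $\cF \hto \cI$ such that $\cI \in \qsupp XC_{\ge w}$ and $\cI/\cF \in \qsupp XC_{\ge w+1}$. Once this is proved, setting $\cF_0 = \cF$ and $\cF_{k+1} = \cI^k/\cF_k$ and iterating the step lemma with parameter $w+k$ produces injectives $\cI^k \in \qsupp XC_{\ge w+k}$; splicing the resulting short exact sequences then yields the desired resolution.

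For the step lemma I would take $\cI$ to be an injective hull of $\cF$ in $\qg X$, which exists because $\qg X$ is a locally noetherian Grothendieck category. A standard argument for locally noetherian Grothendieck categories (using the decomposition of injectives into hulls of simples) shows that the injective hull of a sheaf all of whose coherent subsheaves are supported on the closed subscheme $C$ again has this property, so $\cI \in \qsupp XC$. That $\cI$ lies in $\qsupp XC_{\ge w}$ then follows from essentiality: any nonzero coherent subsheaf of $\cI$ in $\cgl X{w-1}$ would meet $\cF$ nontrivially, but the axiom $\Hom(\cgl X{w-1},\cgg Xw) = 0$ rules out nonzero subsheaves of $\cF$ inside $\cgl X{w-1}$.

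The main work is verifying $\cI/\cF \in \qsupp XC_{\ge w+1}$. Suppose otherwise: some coherent $\cH' \subset \cI/\cF$ fails to lie in $\cgg X{w+1}$, and replacing $\cH'$ by the bottom nonzero layer of its $s$-truncation filtration, I may assume $\cH'$ is $s$-pure of some step $v \le w$. Lifting to the preimage $\tilde\cH' \subset \cI$ gives a short exact sequence $0 \to \cF \to \tilde\cH' \to \cH' \to 0$. If this splits then the resulting copy of $\cH'$ sits inside $\cI$ and meets $\cF$ trivially, contradicting essentiality; so it is enough to prove $\Ext^1(\cH',\cF) = 0$.

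To establish this vanishing, filter $\cH'$ by the powers of $\cI_C$: each subquotient $\cI_C^k \cH'/\cI_C^{k+1}\cH'$ is killed by $\cI_C$ and hence is an $\cO_C$-module, and it inherits $s$-purity of step $v$ as a subquotient of $\cH'$. The split hypothesis then decomposes each such subquotient as a direct sum of simple step-$v$ objects of $\cg C$. Applying \lemref{lem:ext-closed1} to each simple summand and combining via the long exact $\Ext$-sequences of the filtration yields $\Ext^1(\cH',\cG) = 0$ for every coherent $\cG \in \csupp XC_{\ge v}$. Writing $\cF = \varinjlim \cF_\alpha$ as a filtered colimit of its coherent subsheaves, which all lie in $\csupp XC_{\ge w} \subset \csupp XC_{\ge v}$, and using compactness of the coherent sheaf $\cH'$ on the noetherian scheme $X$, I obtain
\[
\Ext^1(\cH',\cF) \;\cong\; \varinjlim_\alpha \Ext^1(\cH',\cF_\alpha) \;=\; 0.
\]
The main obstacle is precisely this final $\Ext$-vanishing: since Lemma~\ref{lem:ext-closed1} treats only simple $s$-pure targets with coherent second argument, bridging to a general coherent step-$v$ object $\cH'$ and a quasicoherent $\cF$ requires both the split hypothesis (to semisimplify step-$v$ $\cO_C$-modules) and compactness of coherent sheaves (to move the filtered colimit outside $\Ext$).
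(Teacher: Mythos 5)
Your proof is correct, and its skeleton---iterated injective hulls, essentiality, and an $\Ext^1$-vanishing that forces each cokernel to climb one step---matches the paper's; but the way you obtain the crucial $\Ext^1$-vanishing is genuinely different. The paper, given a bad coherent subsheaf $\cG$ of the cokernel, shrinks the \emph{source} all the way down to $i_*$ of a simple $s$-pure object (apply $\Sl{w+n}$, then $i_*i^!$, then pass to a simple subobject), and, rather than lifting to the full quasicoherent preimage, chooses a \emph{coherent} subsheaf $\cH$ of the preimage surjecting onto $\cG$; the resulting nonsplit extension $0 \to \cH \cap \im\partial^{n-1} \to \cH \to \cG \to 0$ then has coherent target in $\csupp XC_{\ge w+n}$ and contradicts Lemma~\ref{lem:ext-closed1} directly. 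You stop the source reduction at ``$s$-pure of step $v$'' and keep the quasicoherent $\cF$ as the target, which costs you two extra (but sound) steps: upgrading Lemma~\ref{lem:ext-closed1} to arbitrary coherent $s$-pure sources supported on $C$, via the $\cI_C$-adic filtration and the split hypothesis to semisimplify the layers (this is the same d\'evissage the paper runs on the \emph{target} inside the proof of Lemma~\ref{lem:ext-closed1}, via the invariant $\ell$); and a filtered-colimit argument to pass to quasicoherent targets. The latter is legitimate---in a locally noetherian Grothendieck category such as $\qg X$, the functors $\Ext^i(N,-)$ commute with filtered colimits for $N$ noetherian, since filtered colimits of injectives are injective---though ``compactness'' is the wrong word here (coherent sheaves are not compact objects of the derived category; it is noetherianness of $\cH'$ that you are using). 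The paper's coherent-subsheaf trick is the more economical route, as it sidesteps both extensions; your version has the mild advantage of isolating a reusable strengthening of Lemma~\ref{lem:ext-closed1} for non-simple sources, consistent with the $k=1$ case of Proposition~\ref{prop:ext-closed}.
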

\begin{proof}
For brevity of notation, it will be convenient to set $\cI^{-1} = \cF$.
According to the proof of~\cite[Proposition~10.1]{a}, every sheaf in $\qsupp XC_{\ge w}$ has an injective hull in $\qsupp XC_{\ge w}$.  Let $\cI^0$ be such an injective hull of $\cF$, and let
$\partial^{-1}: \cF \to \cI^0$ be the inclusion map.  For subsequent terms
of the injective resolution, we proceed by induction.  Suppose that the
terms $\cI^{-1}, \cI^0, \ldots, \cI^n$ have already been constructed,
together with morphisms $\partial^k: \cI^k \to \cI^{k+1}$ for $k = -1,
\ldots, n-1$. We will show below that the cokernel of $\partial^{n-1}$ lies
in $\qsupp XC_{\ge w+n+1}$. Then, using the result from~\cite[Proposition~10.1]{a} again, we may take
$\cI^{n+1}$ to be an injective hull of $\cok \partial^{n-1}$ that also
lies in $\qsupp XC_{\ge w+n+1}$.

Suppose $\cok \partial^{n-1} \notin \qsupp XC_{\ge w+n+1}$.  Then there is
some coherent subsheaf $\cG \subset \cok \partial^{n-1}$ that does not lie
in $\csupp XC_{\ge w+n+1}$.  Replacing $\cG$ by its subsheaf $\Sl{w+n}\cG$,
we may assume that $\cG \in \cgl X{w+n}$.  Next, by replacing $\cG$ its
subsheaf $i_*i^!\cG$, we may assume that $\cG$ is actually supported
scheme-theoretically on the orbit $C$.  That is, $\cG \cong i_*\cG'$ for
some $\cG' \in \cgl C{w+n}$.  Finally, recall that $\cg C$ is a
finite-length category, so we may replace $\cG'$ by a simple subobject.  To
summarize: we have a coherent sheaf $\cG \subset \cok \partial^{n-1}$ such
that $\cG \cong i_*\cG'$ for some simple object $\cG' \in \cgl C{w+n}$.

Now, consider the preimage $\tilde\cG$ of $\cG$ in $\cI^n$.  Let $\cH
\subset \tilde\cG$ be any coherent subsheaf not contained in $\im
\partial^{n-1}$.  (Since $\tilde\cG$ is the union of all its coherent
subsheaves, such a sheaf $\cH$ exists.)  The map $\cH \to \cG$ is
surjective, because it is nonzero and $\cG$ is simple.  We thus have a
short exact sequence
\[
0 \to \cH \cap \im \partial^{n-1} \to \cH \to \cG \to 0.
\]
Now, by assumption, $\cI^n$ is the injective hull of $\im \partial^{n-1}$,
so $\cH$ cannot contain a direct summand complementary to $\cH \cap \im
\partial^{n-1}$.  In other words, the exact sequence above cannot split. 
We thus have $\Ext^1(i_*\cG', \cH \cap \im \partial^{n-1}) \ne 0$.  But since $\cH \cap \im \partial^{n-1} \in \csupp XC_{\ge x+n}$, this contradicts Lemma~\ref{lem:ext-closed1}.
\end{proof}

We are now ready to prove the main result of this section.

\begin{proof}[Proof of Proposition~\ref{prop:ext-closed}]
Let $\cI^*$ be an
injective resolution of $\cG$ as constructed in the previous proposition,
with
$\cI^k \in \qsupp XC_{\ge v+k}$.  In particular, if $k > w - v$, there are
no nonzero morphisms $\cF \to \cI^k$: the image of such a morphism, a
certain coherent subsheaf of $\cI^k$, belongs to $\cgl Xw$ and therefore
does not belong to $\csupp XC_{\ge v+k}$ unless it is $0$.  But any nonzero
element of $\Ext^k(\cF,\cG)$ can be represented
by a suitable nonzero morphism $\cF \to \cI^k$.
\end{proof}

We conclude with an application of this Ext-vanishing result.  The following technical lemma will be used in Section~\ref{sect:skew}.

\begin{lem}\label{lem:pullback-closed}
Let $i: Z \hto X$ be the inclusion of a closed subscheme, and let $t : C
\hto X$ be a closed orbit contained in $Z$, so that $i_C = i \circ t$.  Let
$\cF \in \cg X$ be such that $i_C^*\cF \in \cgl Cw$.  Then
$t^*h^{-r}(Li^*\cF) \in \cgl C{w-r}$ for all $r \ge 0$.
\end{lem}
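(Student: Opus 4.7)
The plan is to induct on $r$; the case $r=0$ is immediate, since $t^*h^0(Li^*\cF) = i_C^*\cF$. For the inductive step, I would proceed by contradiction: suppose $t^*h^{-r}(Li^*\cF)$ has a nonzero quotient $\cK \in \cgg C{w-r+1}$. Then $t_*\cK$ is supported on $C$ and lies in $\cgg Z{w-r+1}$ (by right $s$-exactness of $t^*$ with respect to the $s$-structure that $Z$ inherits from $X$), so $t_*\cK \in \csupp ZC_{\ge w-r+1}$. Via the adjunction $t^* \dashv t_*$, deriving a contradiction reduces to proving $\Hom_{\cO_Z}(h^{-r}(Li^*\cF), t_*\cK) = 0$.

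The main tool is the hypercohomology spectral sequence obtained by filtering the bicomplex $\Hom_{\cO_Z}(Li^*\cF, I^\bullet)$ by the resolution variable, where $I^\bullet$ is an injective resolution of $t_*\cK$ in $\qg Z$:
\[
E_2^{p,q} = \Ext^q_{\cO_Z}(h^{-p}(Li^*\cF), t_*\cK) \;\Longrightarrow\; \Ext^{p+q}_{\cO_Z}(Li^*\cF, t_*\cK) \cong \Ext^{p+q}_{\cO_X}(\cF, i_{C*}\cK),
\]
with differentials of bidegree $(1-k,k)$; the isomorphism on the right uses the adjunction $Li^* \dashv i_*$ together with the exactness of the closed-immersion pushforward. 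The position $E_2^{r,0}$ equals the Hom group under scrutiny, and it receives no incoming differentials (the potential sources all have $q<0$). The outgoing differentials $d_k\colon E_k^{r,0} \to E_k^{r+1-k,\,k}$ ($k \geq 2$) have target automatically zero when $k \geq r+2$. For $2 \leq k \leq r+1$, the target is a subquotient of $\Ext^k_{\cO_Z}(h^{-(r+1-k)}(Li^*\cF), t_*\cK)$; the inductive hypothesis (or the base hypothesis when $k=r+1$) gives $t^*h^{-(r+1-k)}(Li^*\cF) \in \cgl C{w-r+k-1}$, and Proposition~\ref{prop:ext-closed} applied on $Z$ then forces this Ext to vanish, since the inequality $k > (w-r+k-1) - (w-r+1) = k-2$ always holds.

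With all differentials around $E^{r,0}$ vanishing, we obtain $E_\infty^{r,0} = E_2^{r,0}$. Meanwhile, Proposition~\ref{prop:ext-closed} applied on $X$ gives $\Ext^r_{\cO_X}(\cF, i_{C*}\cK) = 0$ since $r > w - (w-r+1)$, so $E_\infty^{r,0} = 0$, yielding $E_2^{r,0} = 0$ and the desired contradiction. The main subtlety is the choice of filtration on the bicomplex: only this filtration places the outgoing differentials into Ext groups of cohomological degree $\geq 2$, where the combination of induction and Proposition~\ref{prop:ext-closed} forces vanishing; the complementary filtration would instead produce $\Ext^1$ targets, for which the available vanishing bound fails to be strict by exactly one.
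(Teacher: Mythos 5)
Your argument is correct and is essentially the paper's own proof: the same induction on $r$, the same reduction by adjunction to showing $\Hom(h^{-r}(Li^*\cF),t_*\cK)=0$, and the same two applications of Proposition~\ref{prop:ext-closed} (on $Z$ to kill the contributions of $h^{-p'}(Li^*\cF)$ for $p'<r$, and on $X$ to kill $\Ext^r(\cF,i_{C*}\cK)$), with identical numerology. The only difference is bookkeeping: where you run the hypercohomology spectral sequence $E_2^{p,q}=\Ext^q(h^{-p}(Li^*\cF),t_*\cK)$ and check that all differentials touching $E^{r,0}$ vanish, the paper performs the same d\'evissage by hand, using the truncation triangle $h^{-r}(Li^*\cF)[r]\to\Tg{-r}Li^*\cF\to\Tg{-r+1}Li^*\cF\to$ and the vanishing of $\Hom(\Tg{-r+1}Li^*\cF,t_*\cG[n])$ for $n\ge r$.
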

\begin{proof}
We proceed by induction on $r$.  If $r = 0$, we have
$t^*h^0(Li^*\cF) \cong t^*i^*\cF \cong i_C^*\cF$, and that lies in $\cgl
Cw$ by assumption.  Now, assume that $t^*h^{-k}(Li^*\cF) \in \cgl C{w-k}$
for all $k < r$.  If $t^*h^{-r}(Li^*\cF) \notin \cgl C{w-r}$, then
there is some object $\cG \in \cgg C{w-r+1}$ such that
$\Hom(t^*h^{-r}(Li^*\cF), \cG) \ne 0$, or, equivalently,
$\Hom(h^{-r}(Li^*\cF), t_*\cG) \ne 0$.

Note that for $k < r$, the fact that $t^*h^{-k}(Li^*\cF) \in \cgl C{w-k}$
implies, by Proposition~\ref{prop:ext-closed}, that $\Hom(h^{-k}(Li^*\cF),
t_*\cG[n])
= 0$ whenever $n \ge r - k$.  Equivalently, we have 
$\Hom(h^{-k}(Li^*\cF)[k], t_*\cG[n]) = 0$ for all $n \ge r$. Since the object
$\Tg {-r+1}Li^*\cF \in \Db Z$ is built up by extensions from the objects
$h^{-k}(Li^*\cF)[k]$ with $k = 0, \ldots, r-1$, it follows that $\Hom(\Tg
{-r+1}Li^*\cF, t_*\cG[n]) = 0$ whenever $n \ge r$.

Next, from the distinguished triangle
\[
h^{-r}\cF[r] \to \Tg {-r}\cF \to \Tg {-r+1}\cF \to
\]
we obtain the long exact sequence
\begin{multline*}
\Hom(\Tg {-r+1}Li^*\cF, t_*\cG[r]) \to
\Hom(\Tg {-r}Li^*\cF, t_*\cG[r]) \to \\
\Hom(h^{-r}(Li^*\cF)[r], t_*\cG[r]) \to
\Hom(\Tg {-r+1}Li^*\cF, t_*\cG[r+1]) \to.
\end{multline*}
The first and last terms vanish by the preceding paragraph.  We saw
earlier that the third term is nonzero, so the second term is as well. 
The chain of isomorphisms
\[
\Hom(\cF,i_{C*}\cG[r]) \cong \Hom(Li^*\cF,t_*\cG[r]) \cong \Hom(\Tg
{-r}Li^*\cF, t_*\cG[r])
\]
shows then that $\Hom(\cF, i_{C*}\cG[r]) \ne 0$.  But this is a
contradiction: since $i_C^*\cF \in \cgl Cw$ and $\cG \in \cgg C{w-r+1}$,
we have $\Hom(\cF, i_{C*}\cG[r]) = 0$ by Proposition~\ref{prop:ext-closed}.
\end{proof}

%%%%%%%%%%%%%%%%%%%%%%%%%%%%%%%%%%%%%%%%%%%%%%%%%%%%%%%%%%%%%%%%%%%%%%%%%%%
\section{The Skew Co-$t$-structure}
\label{sect:skew}
%%%%%%%%%%%%%%%%%%%%%%%%%%%%%%%%%%%%%%%%%%%%%%%%%%%%%%%%%%%%%%%%%%%%%%%%%%%

Co-$t$-structures on triangulated categories have appeared in the work of Bondarko~\cite{bon} and Pauksztello~\cite{pauk}.  In this section, we construct a certain family of co-$t$-structures on $\Db X$, and we use them to define the notion of \emph{skew-purity}.

We begin by recalling the definition.
Given a triangulated category $\fD$ and a pair of full subcategories $(\fD_{\sqsubseteq 0}, \fD_{\sqsupseteq 0})$, let us set $\fD_{\sqsubseteq n} = \fD_{\sqsubseteq 0}[n]$ and $\fD_{\sqsupseteq n} = \fD_{\sqsupseteq 0}[n]$.  Note that this is the opposite of the usual convention with $t$-structures.  The pair $(\fD_{\sqsubseteq 0}, \fD_{\sqsupseteq 0})$ is called a \emph{co-$t$-structure} if the following three conditions hold:
\begin{enumerate}
\setcounter{enumi}{-1}
\item $\fD_{\sqsubseteq 0}$ and $\fD_{\sqsupseteq 0}$ are closed under direct summands.
\item $\fD_{\sqsubseteq 0} \subset \fD_{\sqsubseteq 1}$ and $\fD_{\sqsupseteq 0} \supset \fD_{\sqsupseteq 1}$.
\item $\Hom(A,B) = 0$ whenever $A \in \fD_{\sqsubseteq 0}$ and $B \in \fD_{\sqsupseteq 1}$.
\item For any object $X \in \fD$, there is a distinguished triangle $A \to X
\to B \to$ with $A \in \fD_{\sqsubseteq 0}$ and $B \in \fD_{\sqsupseteq 1}$.
\end{enumerate}
Note that for a co-$t$-structure, the distinguished triangle in Axiom~(3) is not functorial.  (The usual proof fails because $A \in \fD_{\sqsubseteq 0}$ does not imply $A[1] \in \fD_{\sqsubseteq 0}$.)  The properties of being \emph{bounded} or \emph{nondegenerate} are defined for co-$t$-structures in the same way as for $t$-structures.  The reader is referred to~\cite{bon} or~\cite{pauk} for further properties of co-$t$-structures.

Now, let $q: \orb X \to \Z$ be a function, to be known as a \emph{skew
perversity}.  Define a full subcategory of $\Dm X$ by
\[
\q\Dmkl Xw = \{ \cF \in \Dm X \mid 
\text{$h^k(X) \in \tql \cgl X{2w+2k}$ for all $k$} \}.
\]
Next, define a new function $\sdualq: \orb X \to \Z$, called the \emph{skew dual} of $q$, by
\[
\sdualq(C) = \alt C - \cod C - q(C).
\]
We then define a full subcategory of $\Dp X$ by
\[
\q\Dpkg Xw = \D(\breq\Dmkl X{-w}).
\]
As usual, we put
\[
\q\Dkl Xw = \q\Dmkl Xw \cap \Db X
\quad\text{and}\quad
\q\Dkg Xw = \q\Dpkg Xw \cap \Db X.
\]
The pictures of these categories resemble ``upside-down'' versions of the categories that constitute the staggered $t$-structure:
\[
\q\Dkl Xw:\incgr{dkl}
\qquad\qquad
\q\Dkg Xw:\incgr{dkg}
\]
Finally, we define a full subcategory of $\Db X$ as follows:
\[
\q\Dkp Xw = \q\Dkl Xw \cap \q\Dkg Xw.
\]
Objects of $\q\Dkp Xw$ are said to be \emph{skew-pure} of \emph{skew-degree} $w$.

The following lemma collects some basic properties of these categories.  The proofs are routine and will be omitted.

\begin{lem}\label{lem:skew-easy}
\begin{enumerate}
\item $\q\Dmkl Xw$  and $\q\Dpkg Xw$ are closed under extensions and direct summands.
\item $\q\Dmkl Xw$ is stable under all standard truncation functors $\Tl
n$ and $\Tg n$.
\item $\q\Dmkl Xw[1] = \q\Dmkl X{w+1}$ and $\q\Dpkg Xw[1] = \q\Dpkg
X{w+1}$.
\item For every $\cF \in \Db X$, there exist integers $v, w$ such that $\cF \in \q\Dkg Xv \cap \q\Dkl Xw$. 
\qed
\end{enumerate}
\end{lem}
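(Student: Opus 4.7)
The plan is to verify each item directly from the definitions; this is what the paper means by calling the proofs ``routine.'' Parts~(1)--(3) all reduce to the single observation that each category $\tql\cgl X{2w+2k}$ is a Serre subcategory of $\cg X$ (as it is one of the categories appearing in the baric filtration of Section~\ref{sect:baric}), together with the fact that $\D$ is a contravariant triangulated equivalence and so automatically preserves extensions, direct summands, and commutes with shifts in the obvious way.

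For part~(1), I would take a distinguished triangle $\cA \to \cB \to \cC \to$ in $\Dm X$ with $\cA, \cC \in \q\Dmkl Xw$ and apply the cohomology long exact sequence. For each $k$, the sheaf $h^k(\cB)$ fits into a short exact sequence whose outer terms are a quotient of $h^k(\cA)$ and a subobject of $h^k(\cC)$; both lie in $\tql\cgl X{2w+2k}$, so $h^k(\cB)$ does too. Closure under direct summands is immediate, since Serre subcategories are closed under summands. Both statements transfer to $\q\Dpkg Xw = \D(\breq\Dmkl X{-w})$ via duality.

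For part~(2), the cohomology sheaves of $\Tl n\cF$ and $\Tg n\cF$ are each either equal to $h^k(\cF)$ or zero, so the condition defining $\q\Dmkl Xw$ is trivially inherited. For part~(3), using $h^k(\cF[1]) = h^{k+1}(\cF)$ one sees that $\cF[1] \in \q\Dmkl Xw$ iff $h^{k+1}(\cF) \in \tql\cgl X{2w+2k}$ for all $k$, which after reindexing $j = k+1$ becomes $h^j(\cF) \in \tql\cgl X{2(w-1)+2j}$, i.e.\ $\cF \in \q\Dmkl X{w-1}$. This gives $\q\Dmkl Xw[1] = \q\Dmkl X{w+1}$, and the corresponding statement for $\q\Dpkg Xw$ follows by applying $\D$ to the identity $\breq\Dmkl X{-w}[-1] = \breq\Dmkl X{-w-1}$.

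For part~(4), since $\cF \in \Db X$ has only finitely many nonzero cohomology sheaves and $X$ has finitely many orbits (so $q$ is bounded on $\orb X$), for each nonzero $h^k(\cF)$ and each $C \in \orb X$ one can compare the finite filtration step of $i_C^* h^k(\cF)|_C$ in $\cg C$ against $\cgl C{w+k+q(C)}$ and choose $w$ large enough that all containments hold; this gives $\cF \in \q\Dkl Xw$. The lower bound is obtained by applying the same argument to $\D\cF$ using the skew-dual perversity $\sdualq$. Since there is no substantive obstacle here, the only point worth flagging is the bookkeeping in part~(3) to make sure the index shift goes the right way.
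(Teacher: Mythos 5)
Your verification is correct, and it is precisely the routine argument the paper has in mind (the paper explicitly omits the proof of this lemma as routine): everything reduces to the fact that each $\tql\cgl X{2w+2k}$ is a Serre subcategory of $\cg X$, the long exact cohomology sequence, the index shift $h^k(\cF[1]) = h^{k+1}(\cF)$, and transport of structure through the contravariant equivalence $\D$. The bookkeeping in part~(3) and the boundedness argument in part~(4) both check out.
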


\begin{lem}\label{lem:spure-res}
Let $j: U \hto X$ be the inclusion of an open subscheme, and $i: Z \hto X$
the inclusion of a closed subscheme.  Then:
\begin{enumerate}
\item $j^*$ takes $\q\Dmkl Xw$ to $\q\Dmkl Uw$ and $\q\Dpkg Xw$ to
$\q\Dpkg Uw$.
\item $Li^*$ takes $\q\Dmkl Xw$ to $\q\Dmkl Zw$.
\item $Ri^!$ takes $\q\Dpkg Xw$ to $\q\Dpkg Zw$.
\item $i_*$ takes $\q\Dmkl Zw$ to $\q\Dmkl Xw$ and $\q\Dpkg Zw$ to
$\q\Dpkg Xw$.
\end{enumerate}
\end{lem}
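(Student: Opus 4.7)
The strategy is to split the four assertions into two groups.  The three statements about $\q\Dmkl$ (halves of (1) and (4), together with (2)) will be proved directly; the three about $\q\Dpkg$ (the other halves of (1) and (4), together with (3)) will then follow by Serre--Grothendieck duality, using $\q\Dpkg Xw = \D(\breq\Dmkl X{-w})$ together with the standard compatibilities $j^*\D \cong \D j^*$ for an open immersion and $Ri^!\D \cong \D Li^*$, $i_*\D \cong \D i_*$ for a closed immersion.  Among the $\q\Dmkl$ statements, $j^*$ and $i_*$ are handled at once: both are $t$-exact for the standard $t$-structure, so the cohomological definition of $\q\Dmkl$ reduces them to the preservation of $\tql\cgl{}{n}$, which is Lemma~\ref{lem:baric-res} applied with baric perversity $2q$.

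For $Li^*$, which is the only real work, fix $\cF \in \q\Dmkl Xw$, an integer $l$, and an orbit $C' \subset Z$; the goal is to show that the locally-closed pullback $(t_{C'}^Z)^*h^l(Li^*\cF)$ lies in the Serre subcategory $\cgl{C'}{w+l+q(C')}$ of $\cg{C'}$.  After replacing $X$ by $V = X \ssm (\overline{C'} \ssm C')$ and $Z$ by $W = V \cap Z$---a reduction justified by part~(1) and flat base change---I may assume that $C'$ is a closed orbit of $X$ (and hence of $Z$).  In this situation $t_{C'}^Z\colon C' \hto Z$ is a closed immersion, and the composition $Li_{C'}^{X,*} = L(t_{C'}^Z)^*\circ Li^*$ becomes available, providing a direct handle via Lemma~\ref{lem:pullback-closed}.

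My plan is (a)~to bound the composite $\eta_l := h^l(Li_{C'}^{X,*}\cF)$ inside $\cgl{C'}{w+l+q(C')}$, and then (b)~to transfer that bound to $(t_{C'}^Z)^*h^l(Li^*\cF)$.  For (a), right $t$-exactness gives $\eta_l \cong h^l(Li_{C'}^{X,*}\Tg l\cF)$ with $\Tg l\cF$ bounded; the triangles $h^k(\cF)[-k] \to \Tg k\cF \to \Tg{k+1}\cF \to$ for $l \le k \le K$ and their long exact sequences under $Li_{C'}^{X,*}$ build $\eta_l$ from factors $h^{l-k}(Li_{C'}^{X,*}h^k(\cF))$, each of which Lemma~\ref{lem:pullback-closed}---applied on $X$ with closed orbit $C'$---places inside $\cgl{C'}{w+l+q(C')}$, and the Serre property of this subcategory of $\cg{C'}$ closes the induction.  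For (b), downward induction on $l$ combined with the Grothendieck composition spectral sequence $E_2^{p,q} = h^p(L(t_{C'}^Z)^*h^q(Li^*\cF)) \Rightarrow h^{p+q}(Li_{C'}^{X,*}\cF)$ does the job: the inductive hypothesis controls $h^{l+r-1}(Li^*\cF)$ for $r \ge 2$, and Lemma~\ref{lem:pullback-closed} applied on $Z$ then places each $E_2^{-r,l+r-1}$ inside the strictly smaller subcategory $\cgl{C'}{w+l-1+q(C')}$; since $E_\infty^{0,l}$ is a subobject of $\eta_l$, the edge term $E_2^{0,l} = (t_{C'}^Z)^*h^l(Li^*\cF)$ is an extension of two things in $\cgl{C'}{w+l+q(C')}$ and hence lies there itself.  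The main obstacle is precisely this last spectral-sequence step, which is forced on us because $\tql\cgl Z{2w+2l}$ is not in general closed under subobjects in $\cg Z$, so a naive long-exact-sequence argument directly on $Z$ fails to close.
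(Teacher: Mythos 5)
Your proposal is correct, and the reductions for $j^*$, $i_*$, and the three dual statements match the paper's. For part~(2), however, you take a genuinely different and longer route. The paper first reduces to the case where $\cF$ is a single sheaf (using that $\q\Dmkl Zw$ is stable under extensions, Lemma~\ref{lem:skew-easy}(1), and then a truncation argument for unbounded-below complexes), and then applies Lemma~\ref{lem:pullback-closed} exactly \emph{once}, with the ambient scheme $V = X \ssm (\overline{C'}\ssm C')$, the closed subscheme $Z \cap V$, and the closed orbit $C' \subset Z\cap V$: in that generality the lemma already bounds the locally closed pullback $t^*h^{-r}(Li^*\cF)$ directly, which is precisely the quantity you are after, so no spectral sequence is needed. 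You instead invoke the lemma only in its degenerate specialization (closed subscheme equal to the orbit itself), once on $X$ and once on $Z$, and reassemble the two via the Grothendieck composition spectral sequence for $L(t_{C'}^Z)^*\circ Li^*$ together with a downward induction on $l$; this works (the edge analysis at $E_2^{0,l}$ and the convergence for bounded-above complexes are fine), but it re-derives by hand what the stronger form of Lemma~\ref{lem:pullback-closed} gives for free. One remark on your stated motivation: the categories $\tql\cgl Z{2w+2l}$ \emph{are} closed under subobjects --- they are Serre subcategories of $\cg Z$ (this is implicit in Lemma~\ref{lem:skew-easy}(1) and is part of the setup of~\cite{at:bs}) --- so the obstruction you describe to a ``naive long-exact-sequence argument on $Z$'' is not the real difficulty; the genuine difficulty, which both arguments must address, is controlling the \emph{higher} derived pullbacks $h^{-r}(Li^*\cF)$ for $r>0$, and that is what Lemma~\ref{lem:pullback-closed} (via the Ext-vanishing of Proposition~\ref{prop:ext-closed}) supplies.
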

\begin{proof}
For parts~(1) and~(4), the statements about $\q\Dmkl Xw$ follow from the
fact that $j^*$ and $i_*$ are exact, baryexact functors, and the
statements about $\q\Dpkg Xw$ then follow from the fact that $j^*$ and
$i_*$ commute with $\D$.  Part~(3) follows by duality from part~(2).

It remains only to prove part~(2).  Let $\cF \in \q\Dmkl Xw$.  We first
consider the special case where $\cF$ is concentrated in a single degree,
say degree $n$.  Thus, $\cF[n]$ is an object in $\tql\cgl X{2w+2n}$.  Let
$i_C : C \hto X$ be an orbit contained in $Z$, and let $j: U \hto X$ be the
inclusion of the open subscheme $U = X \ssm (\overline C \ssm C)$.  Thus,
$C$ is a closed orbit in $U$.  Let $t: C \hto Z \cap U$ be the inclusion of
$C$ into $Z \cap U$.  By assumption, $i_C^*\cF[n]|_C \in \cgl C{w
+ q(C) +n}$, so by Lemma~\ref{lem:pullback-closed},
$t^*h^{-r}(Li^*\cF[n]|_U) \in \cgl C{w + q(C) +n - r}$ for all $r \ge
0$.  Clearly, $t^*h^{-r}(Li^*\cF[n]|_U) \cong i_C^*h^{n-r}(Li^*\cF)|_C$,
so we have just shown that $h^k(Li^*\cF) \in \tql\cgl Z{2w+2k}$. 
Thus, $Li^*\cF \in \q\Dmkl Zw$.

Since $\q\Dmkl Zw$ is stable under extensions, an induction argument on the
number of nonzero cohomology sheaves shows that for all $\cF \in \q\Dkl
Xw$, we have $Li^*\cF \in \q\Dmkl Zw$.  Finally, consider a general object
$\cF \in \q\Dmkl Xw$.  For any $k$, we can form a distinguished triangle
\[
Li^*(\Tl {k-1}\cF) \to Li^*\cF \to Li^*(\Tg k\cF) \to
\]
Clearly, $\Tg k\cF \in \q\Dkl Xw$, so we already know that $Li^*(\Tg k\cF)
\in \q\Dmkl Zw$.  Moreover, the long exact sequence associated to the
distinguished triangle above shows that $h^k(Li^*\cF) \cong h^k(Li^*\Tg
k\cF)$, and hence that $h^k(Li^*\cF) \in \tql\cgl Z{2w+2k}$.  Since this
holds for all $k$, we conclude that $Li^*\cF \in \q\Dmkl Zw$, as desired.
\end{proof}

\begin{prop}\label{prop:skew-orth}
If $\cF \in \q\Dmkl Xw$ and $\cG \in \q\Dpkg X{w+1}$, then $\Hom(\cF,\cG) = 0$.
\end{prop}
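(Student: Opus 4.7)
The plan is to combine noetherian induction on $X$ with reduction to the single-orbit case, where the skew conditions can be translated into concrete step bounds and Proposition~\ref{prop:ext-closed} delivers the required $\Ext$-vanishing.

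First, I proceed by noetherian induction, assuming the result on all proper closed subschemes of $X$. Choose an open orbit $C \in \orb X$, let $j: U \hto X$ be the corresponding open immersion, and let $Z = X \ssm C$ be the complementary closed subscheme. The standard exact sequence (as used in Lemmas~\ref{lem:pb-purep-exact} and~\ref{lem:purep-hom})
\[
\lim_{\substack{\to \\ Z'}} \Hom(Li^*_{Z'}\cF, Ri^!_{Z'}\cG) \to \Hom(\cF,\cG) \to \Hom(\cF|_U, \cG|_U),
\]
where $i_{Z'}: Z' \hto X$ ranges over all closed subscheme structures on $Z$, combines with Lemma~\ref{lem:spure-res}(2),(3) to reduce (via the inductive hypothesis on each $Z'$) to showing $\Hom(\cF|_U, \cG|_U) = 0$ when $U = C$ is a single orbit.

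Next, on $U = C$, I reduce to $\cF$ and $\cG$ each concentrated in a single cohomological degree. By Lemma~\ref{lem:skew-easy}(2), $\q\Dmkl Cw$ is stable under the standard truncations $\Tl n$ and $\Tg n$; dualizing and using that $\D$ intertwines $\Tl n$ with $\Tg{-n}$ gives the same stability for $\q\Dpkg Cw$. Inducting on the number of nonzero cohomology sheaves via the distinguished triangle $\Tlt n \cF \to \cF \to h^n(\cF)[-n] \to$ (and the analogue for $\cG$), we may assume $\cF = \cF_0[-n]$ and $\cG = \cG_0[-m]$ for sheaves $\cF_0, \cG_0 \in \cg C$; the goal becomes $\Ext^{n-m}(\cF_0, \cG_0) = 0$. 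The hypothesis on $\cF$ reads $\cF_0 \in \cgl C{w+q(C)+n}$. Writing $\omega_C = \mathcal{E}[-\cod C]$ with $\mathcal{E} \in \cg C$ $s$-pure of step $\alt C$, and unpacking $\q\Dpkg C{w+1} = \D(\breq\Dmkl C{-w-1})$ together with $\sdualq(C) = \alt C - \cod C - q(C)$, the hypothesis on $\cG$ becomes the family of step upper bounds
\[
h^i(\cRHom(\cG_0, \mathcal{E})) \in \cgl C{\alt C - w - m - q(C) - 1 + i}
\qquad\text{for all } i \ge 0.
\]
These numerics are calibrated precisely so that the bounds force $\cG_0 \in \cgg C{w + m + q(C) + 1}$; once this lower bound is in hand, Proposition~\ref{prop:ext-closed} applied to $C$ (closed in itself) yields $\Ext^k(\cF_0, \cG_0) = 0$ for all $k > (w+q(C)+n) - (w+m+q(C)+1) = n-m-1$, which covers $k = n-m$ and so completes the argument.

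The main obstacle is deducing $\cG_0 \in \cgg C{w+m+q(C)+1}$ from the family of upper bounds on $h^i(\cRHom(\cG_0, \mathcal{E}))$. The $i=0$ bound on $\cHom(\cG_0, \mathcal{E})$ alone is insufficient because $\cHom(-, \mathcal{E})$ is not exact on $\cg C$: a putative low-step subobject $\Sl{w+m+q(C)}\cG_0 \subset \cG_0$ could contribute to $\cHom$ or $h^1(\cRHom(\cdot, \mathcal{E}))$ in a manner that is absorbed by the long exact sequence. The argument will presumably proceed by contradiction, inducting on the $s$-length of $\Sl{w+m+q(C)}\cG_0$ and using all the higher-$i$ bounds simultaneously; the split and recessed hypotheses on the $s$-structure should be what close the loop, by controlling $\Ext$ between simples of equal step and governing how $\cRHom(-, \mathcal{E})$ behaves on $s$-pure subquotients of $\cG_0$.
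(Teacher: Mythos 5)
Your overall architecture (noetherian induction via the standard exact sequence, restriction to the open orbit, reduction to single cohomological degrees, and a final appeal to Proposition~\ref{prop:ext-closed}) matches the paper's, but the step you flag as ``the main obstacle'' is a genuine gap, and the sketch you give for filling it is not how the argument goes. Deducing $\cG_0 \in \cgg C{w+m+q(C)+1}$ from upper bounds on all the $h^i(\cRHom(\cG_0,\mathcal{E}))$ is exactly the hard point, and there is no indication that an induction on the $s$-length of $\Sl{w+m+q(C)}\cG_0$ using the split hypothesis closes it; splitness enters this circle of ideas only through Proposition~\ref{prop:ext-closed} itself, not through any dualization step. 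The paper sidesteps the obstacle entirely by choosing the other normal form: it reduces to the case where $\cF$ and $\D\cG$ (not $\cG$) are each concentrated in a single degree, using that $\breq\Dmkl X{-w-1}$ is stable under standard truncation. Then, on the open subscheme $U$ whose associated reduced scheme is the single orbit $C$, the fact that $\D\cG|_U$ sits in one degree forces $\cG|_U$ to sit in the single degree $\cod C - m$ (by~\cite[Lemma~6.6]{a}), and the one $\cgl$ bound on $\D\cG[m]|_U$ dualizes directly into the needed $\cgg$ bound $\cG[\cod C - m]|_U \in \csupp UC_{\ge \cod C + q(C)+w+1-m}$. No multi-term $\cRHom$ complex ever has to be analyzed.

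Two further points. First, your reduction ``to $U = C$ a single orbit'' conflates the open subscheme $U$ with its reduction: the Hom group produced by the exact sequence lives on the possibly nonreduced $U$, where sheaves are not locally free and duality is not exact, so the simplification you implicitly rely on (that $\cRHom(\cG_0,\mathcal{E})$ is concentrated in degree $0$) is unavailable; Proposition~\ref{prop:ext-closed} must be applied on $U$ with $C$ as its closed orbit, which is what the $\csupp UC_{\ge \cdot}$ formulation is for. Second, the claim that $\D$ ``intertwines $\Tl n$ with $\Tg{-n}$'' is false in general (it only shifts the two truncations against each other up to the spread of codimensions), so your stability of $\q\Dpkg Cw$ under standard truncation needs to be rephrased as stability of the dual category $\breq\Dmkl X{-w-1}$ under truncation --- which, again, naturally leads you to put $\D\cG$, rather than $\cG$, into a single degree.
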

\begin{proof}
We proceed by noetherian induction, and assume the result is already known
for all proper closed subschemes of $X$.  Let $a$ be an integer such that
$\cG \in \Dpg Xa$.  Then $\Hom(\cF, \cG) \cong \Hom(\Tg a\cF,\cG)$. 
Moreover, we have $\Tg a\cF \in \q\Dkl Xw$.  Thus, we may reduce to the
case where $\cF$ actually belongs to $\q\Dkl Xw$, by replacing $\cF$ by
$\Tg a \cF$ if necessary.  Next, recall that $\Hom(\cF,\cG) \cong
\Hom(\D\cG, \D\cF)$, and suppose $\D\cF \in \Dg Xb$.  We may similarly
reduce to the case where $\cG \in \q\Dkg X{w+1}$ by replacing $\cG$ by
$\D\Tg b\D\cG$ if necessary.

Once we have reduced to the case where both $\cF$ and $\cG$ are bounded, we
may, by induction on the number of nonzero cohomology sheaves, further
reduce to the case where $\cF$ and $\D\cG$ are each concentrated in a
single degree.  Suppose that $\cF$ is concentrated in degree $k$, and
$\D\cG$ in degree $m$.  That is, $\cF[k] \in \tql\cgl X{2w+2k}$, and
$(\D\cG)[m] \in \tbql \cgl X{-2w-2+2m}$.

Let $C \subset X$ be an open orbit, and let $U \subset X$ be the
corresponding (possibly nonreduced) subscheme.  Consider the usual exact
sequence
\[
\lim_{\substack{\to \\ Z'}} \Hom(Li^*_{Z'}\cF, Ri^!_{Z'}\cG) \to
\Hom(\cF,\cG) \to 
\Hom(\cF|_U,\cG|_U) 
\]
where $i_{Z'}: Z' \hto X$ ranges over all closed subscheme structures on
$X \ssm U$.  Since $Li^*_{Z'}\cF \in \q\Dmkl {Z'}w$ and $Ri^!_{Z'}\cG \in
\q\Dpkg X{w+1}$, the first term vanishes by assumption.  To finish the
proof, then, it suffices to show that the third term vanishes.

Since the associated reduced scheme of $U$ is the single orbit $C$, $U$
has no nonempty ($G$-invariant) proper open subschemes.  The fact that
$\D\cG|_U$ is concentrated in degree $m$ then implies, by \cite[Lemma~6.6]{a}, that
$\cG|_U$ is concentrated in degree $\cod C - m$.  Since
\[
\D\cG[m]|_U \in \tbql\cgl U{-2w-2+2m} = \cgl U{\alt C - \cod C -
q(C)-w-1+m},
\]
we know that $\cG[\cod C -m]|_U \in \cgg U{\cod
C + q(C)+w+1-m}$, and hence that $\cG[\cod C - m]|_U \in \csupp UC_{\ge \cod C +
q(C)+w+1-m}$. Similarly, 
\[
\cF[k]|_U \in \tql\cgl U{2w+2k} = \cgl
U{q(C)+w+k},
\]
and therefore $i_C^*\cF[k]|_C \in \cgl C{q(C)+w+k}$. 
Propostion~\ref{prop:ext-closed} tells us that 
\[
\Hom(\cF[k]|_U, \cG[\cod
C-m+n]|_U) = 0
\]
whenever $n > k + m - \cod C -1$.  In particular, taking
$n = k + m - \cod C$, we find that $\Hom(\cF[k]|_U, \cG[k]|_U) \cong
\Hom(\cF|_U,\cG|_U) = 0$, as desired.
\end{proof}

\begin{thm}
$(\q\Dkl X0, \q\Dkg X0)$ is a nondegenerate, bounded co-$t$-structure on $\Db X$.
\end{thm}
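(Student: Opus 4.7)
Axioms (0) and (1) of the co-$t$-structure are immediate: (0) is contained in Lemma~\ref{lem:skew-easy}(1), and (1) follows from the containment $\tql\cgl Xn \subset \tql\cgl X{n+1}$ together with the fact that $\q\Dpkg Xw$ is defined by applying $\D$ to $\breq\Dmkl X{-w}$. Axiom (2) is precisely Proposition~\ref{prop:skew-orth}. Boundedness is Lemma~\ref{lem:skew-easy}(4); nondegeneracy follows from the boundedness of the self-dual baric structure, since any $\cF \in \bigcap_w \q\Dkl Xw$ must have each cohomology sheaf $h^k(\cF)$ in $\bigcap_w \tql\cgl X{2w+2k}$, which is zero.

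The substantive task is axiom~(3): for every $\cF \in \Db X$, produce a distinguished triangle $\cF' \to \cF \to \cF'' \to$ with $\cF' \in \q\Dkl X0$ and $\cF'' \in \q\Dkg X1$. I propose a double induction --- noetherian induction on $X$ combined with an inner induction on the cohomological amplitude of $\cF$. For the amplitude induction, the base case is a single sheaf $\cH$ placed in some cohomological degree $n$: the condition $\cH[-n] \in \q\Dkl X0$ is equivalent to $\cH \in \tql\cgl X{2n}$, and the desired triangle is produced by the baric truncation of $\cH$ at level $2n$, namely
\[
(\tql\Bl{2n}\cH)[-n] \to \cH[-n] \to (\tql\Bg{2n+1}\cH)[-n] \to .
\]
The first term lies in $\q\Dkl X0$ directly by construction; the last lies in $\q\Dkg X1$ by dualizing and invoking the identity $\sdualq(C) = \alt C - \cod C - q(C)$ together with the dual baric structure formula $\D(\q\Dsg Xw) = \cheq\Dsl X{-w}$ recalled in Section~\ref{sect:baric}. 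For the inductive step in amplitude, given $\cF$ with more than one nonzero cohomology sheaf, one forms a standard truncation triangle $\Tl k\cF \to \cF \to \Tg {k+1}\cF \to$ for a suitable $k$, applies the inductive hypothesis to the outer terms, and patches via an octahedral diagram; the result remains in $\q\Dkl X0$ (respectively $\q\Dkg X1$) because, by Lemma~\ref{lem:skew-easy}(1), these subcategories are closed under extensions.

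For the noetherian induction step, pick an open orbit $C \subset X$ with corresponding open inclusion $j: U \hto X$ and complementary closed inclusion $i: Z \hto X$. One first produces the triangle on $U$ via the amplitude induction above (passing to the reduced case using Lemma~\ref{lem:red}), and then extends to $X$ by the gluing pattern used in the proofs of Proposition~\ref{prop:purep-t} and~\cite[Theorem~1]{bez:pc}: lift the candidate $\cF'|_U$ to an object on $X$, and correct the discrepancy by an object supported on $Z$ supplied by the noetherian inductive hypothesis. Lemma~\ref{lem:spure-res} furnishes the exactness properties needed to ensure the gluing stays within $\q\Dkl X0$ and $\q\Dkg X1$.

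The main obstacle I anticipate is the verification that the cone $(\tql\Bg{2n+1}\cH)[-n]$ lies in $\q\Dkg X1$. Unwinding this condition requires computing the cohomology of the Serre--Grothendieck dual of a non-pure sheaf and tracking how the sloped bound defining $\breq\Dmkl X{-1}$ interacts with the duality shift $\cod C$ encoded in $\sdualq$. I expect this to reduce --- via a further restriction to open orbits --- to a purely local computation on a single orbit, where it becomes a direct consequence of the compatibility between the recessed $s$-structure on $C$ and Serre--Grothendieck duality established in~\cite[Section~6]{a}.
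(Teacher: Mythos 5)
Your handling of axioms (0)--(2), boundedness, and nondegeneracy is essentially right (for nondegeneracy one also needs $\bigcap_w \q\Dkg Xw = 0$, which follows by duality), and the first term of your proposed triangle does land in $\q\Dkl X0$. The gap is the third term: once $X$ has more than one orbit, $(\tql\Bg{2n+1}\cH)[-n]$ need \emph{not} lie in $\q\Dkg X1$, and this cannot be repaired by a local computation. The point is that $\q\Dkg X1 = \D(\breq\Dmkl X{-1})$ is cut out by a \emph{sloped} cohomological bound on the Serre--Grothendieck dual, whereas $\tql\Bg{2n+1}\cH$ is characterized only by Hom-vanishing against $\tql\Dmsl X{2n}$; these two ``$\ge$'' notions coincide on a single orbit (where your closing computation does succeed) but not globally. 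Quantitatively, $\D(\tql\Bg{2n+1}\cH)$ lies in the $\le(-2n-1)$ part of the dual baric structure, which bounds the restriction to an orbit $C$ of each cohomology sheaf of the dual by $\cgl C{\alt C - q(C) - n - 1}$, uniformly in the cohomological degree $j$; membership in $\breq\Dmkl X{-1}$ instead demands the bound $\cgl C{\alt C - \cod C - q(C) + j - 1}$, which is strictly stronger whenever $j < \cod C - n$. The dual of a sheaf has cohomology in exactly such degrees over orbits of non-minimal codimension, so the required containment fails in general. (This is the same phenomenon the paper flags when it warns that $\cg X \cap \q\Dsg Xw$ ``does not seem to have very good properties.'')

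The paper's proof therefore avoids baric truncation entirely. For $\cF$ concentrated in degree $k$ it picks an open orbit $C$ with $\cod C$ \emph{minimal}, takes the $s$-structure subsheaf $\Sl{q(C)+k}(\cF[k])$ over the corresponding open set $U$ only, extends it to a subsheaf over $X$ lying in $\tql\cgl X{2k}$ via~\cite[\lemqext]{at:bs}, then dualizes the quotient and repeats --- controlling the result only over $U$. This yields a three-term decomposition $\cF \in \{\cF_1\} * \{\cG\} * \{\cF_2\}$ with $\cF_1 \in \q\Dkl X0$, $\cF_2 \in \q\Dkg X1$, and an error term $\cG$ supported on the closed complement of $U$, which is then absorbed by the noetherian induction. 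Your gluing paragraph gestures at this pattern, but without the three-term structure and the minimal-codimension choice the argument does not close. A smaller issue: in the amplitude induction, co-$t$-structure decompositions are not functorial, so the octahedron is not available off the shelf; the paper first shows that the connecting map $\cF'_2[-1] \to \Tl k\cF$ factors through $\cF'_1$ (using $\Hom(\cF'_2[-1],\cF''_1)=0$) and then applies the $3\times 3$ lemma.
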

\begin{proof}
We proceed by noetherian induction, in a manner similar to the proof of Proposition~\ref{prop:purep-t}.  In view of the results above, it remains only to show that for any $\cF \in \Db X$, there is a distinguished triangle $\cF' \to \cF \to \cF'' \to$ with $\cF'
\in \q\Dkl X0$ and $\cF'' \in \q\Dkg X1$.  Let us first treat the special case where $\cF$ is concentrated in a single degree, say $\cF \cong h^k(\cF)[-k]$.  
Choose an open orbit $C \in \orb X$ on which $\cod C$ achieves its minimum value, and let $U \subset X$ be the corresponding open subscheme.  Consider the sheaf $\Sl {q(C)+k}(\cF[k])|_U \in \cgl U{q(C)+k} = \tql\cgl U{2k}$.  By~\cite[\lemqext]{at:bs}, there exists a subsheaf of $\cF[k]$ in $\tql\cgl X{2k}$ whose restriction to $U$ is $\Sl {q(C)+k}(\cF[k])|_U$.  Denote this subsheaf by $\cF_1[k]$.  That is, we denote by $\cF_1$ an object of $\Db X$ concentrated in degree $k$ such that $\cF_1[k]$ is the subsheaf of $\cF[k]$ obtained by invoking \cite[\lemqext]{at:bs}.  Clearly, $\cF_1 \in \q\Dkl X0$.

Next, let $\cF'$ be the cone of the obvious morphism $\cF_1 \to \cF$.  Clearly,  $\cF'$ is also concentrated in degree $k$, and $\cF'[k]|_U \cong \Sg {q(C)+k+1}(\cF[k]|_U)$.  Because $C$ was chosen to minimize $\cod C$, we have $\D\cF' \in \Dg X{\cod C - k}$.  Moreover, by~\cite[Proposition~6.8]{a}, $\D\cF'|_U$ is concentrated in degree $\cod C - k$, and $(\D\cF')[\cod C - k]|_U \in \cgl U{\alt C - q(C) -k -1} = \cgl U{\sdualq(C) + \cod C - k - 1} = \tbql\cgl U{2(\cod C -k) - 2}$.  By invoking~\cite[\lemqext]{at:bs} again, we can find an object $\cG_1 \in \Db X$, concentrated in degree $\cod C - k$, such that $\cG_1[\cod C - k]$ is a subsheaf of $(\D\cF')[\cod C - k]$ lying in $\tbql\cgl X{2(\cod C - k) - 2}$, and such that $\cG_1|_U \cong \D\cF'|_U$.  By construction, $\cG_1 \in \breq\Dkl X{-1}$.

Let $\cF_2 = \D\cG_1$, and let $\cG$ denote the cocone of the morphism $\cF' \to \cF_2$.  We have
\[
\cF \in \{\cF_1\} * \{\cG\} * \{\cF_2\},
\]
with $\cF_1 \in \q\Dkl X0$ and $\cF_2 \in \q\Dkg X1$.  Moreover, since $\cF'|_U \cong \cF_2|_U$, we see that $\cG$ is supported on a proper closed subscheme.  It follows by noetherian induction that $\cF$ sits in a suitable distinguished triangle.

Now, for general $\cF \in \Db X$, we proceed by induction on the number of nonzero cohomology sheaves.  Choose some $k$ such that $\Tl k\cF$ and $\Tg{k+1}\cF$ are both nonzero, and thus have fewer nonzero cohomology sheaves than $\cF$.  Find distinguished triangles
\[
\cF'_1 \to \Tl k\cF \to \cF''_1 \to
\qquad\text{and}\qquad
\cF'_2 \to \Tg{k+1}\cF \to \cF''_2 \to
\]
with $\cF'_1,\cF'_2 \in \q\Dkl X0$ and $\cF''_1,\cF''_2 \in \q\Dkg X1$.  Consider the composition $\cF'_2[-1] \to \Tg{k+1}[-1]\cF \to \Tl k\cF$, which we denote by $f$.  Now, $\Hom(\cF'_2[-1],\cF''_1) = 0$ (because $\cF'_2[-1] \in \q\Dkl X{-1}$), so $f \in \Hom(\cF'_2[-1],\Tl k\cF)$ factors through $\cF'_1$.  We thus obtain a commutative square
\[
\xymatrix@=10pt{
\cF'_2[-1] \ar[r]\ar[d] & \Tg{k+1}\cF[-1] \ar[d] \\
\cF'_1 \ar[r] & \Tl k\cF}
\]
Let us complete this diagram using the $9$-lemma~\cite[Proposition~1.1.11]{bbd}, and then rotate:
\[
\xymatrix@=10pt{
\cF'_1 \ar[r]\ar[d] & \Tl k\cF \ar[r]\ar[d] & \cF''_1 \ar[r]\ar[d]
& \\
\cF' \ar[r]\ar[d] & \cF \ar[r]\ar[d] & \cF'' \ar[r]\ar[d] & \\
\cF'_2 \ar[r]\ar[d] & \Tg {k+1}\cF \ar[r]\ar[d] & \cF''_2
\ar[r]\ar[d] & \\
&&&}
\]
We see that $\cF' \in \q\Dkl X0$ and $\cF'' \in \q\Dkg X1$ because those categories are stable under extensions.
\end{proof}

%%%%%%%%%%%%%%%%%%%%%%%%%%%%%%%%%%%%%%%%%%%%%%%%%%%%%%%%%%%%%%%%%%%%%%%%%%%
\section{The Skew Purity Theorem}
\label{sect:purity2}
%%%%%%%%%%%%%%%%%%%%%%%%%%%%%%%%%%%%%%%%%%%%%%%%%%%%%%%%%%%%%%%%%%%%%%%%%%%

We prove the skew version of the Purity Theorem in this section.  Of course, we must specify a skew perversity with respect to which skew-purity statements are to be understood.  Given a moderate staggered perversity $r: \orb X \to \Z$, we associate to it a skew perversity, denoted $\skewr: \orb X \to \Z$, as follows:
\[
\skewr(C) = r(C) - \cod C.
\]
Note that this operation transforms staggered duals into skew duals:
\[
\skewdr(C) = (\scod C - r(C)) - \cod C = \alt C - \cod C - (r(C) - \cod C) = (\skewr)\breve{\;}(C).
\]
Henceforth,we will generally omit the perversity from the notation for skew categories.  Unless otherwise specified, the categories $\Dmkl Xw$ and $\Dpkg Xw$ should be understood to be defined with respect to $\skewr(C)$.

\begin{lem}\label{lem:ic-pure2}
Let $\cL \in \cg {C_0}$ be a coherent sheaf, $s$-pure of step $v$.  For any staggered perversity $r$, the object $\ru\cIC(\overline C_0, \cL[v-r(C_0)])$
is skew-pure of skew degree $w = 2v - 2r(C_0) +  \cod C_0$.
\end{lem}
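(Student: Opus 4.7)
\emph{Plan.} Set $\cF = \ru\cIC(\overline C_0, \cL[v-r(C_0)])$ and $w = 2v - 2r(C_0) + \cod C_0$. The strategy parallels Lemma~\ref{lem:ic-pure}: I first establish $\cF \in \Dkl Xw$ via Corollary~\ref{cor:ic-bound}, and then deduce $\cF \in \Dkg Xw$ by Serre--Grothendieck duality.

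For the first step, consider the baric perversity
\[
q(C) = \alt C_0 + 2r(C) - 2r(C_0) - 2\cod C + 2\cod C_0
\qquad\text{for $C \subset \overline C_0$,}
\]
extended arbitrarily off $\overline C_0$. By construction this saturates the hypothesis of Corollary~\ref{cor:ic-bound}, which yields $\cF \in \q\Dsl X{2v-\alt C_0}$. Unwinding the definition of $\q\cgl$, this says that for every $k$ and every orbit $C \subset \overline C_0$,
\[
i_C^* h^k(\cF)|_C \in \cgl C{v + r(C) - r(C_0) + \cod C_0 - \cod C}.
\]
On the other hand, membership of $\cF$ in $\Dmkl Xw$ at the skew perversity $\skewr(C) = r(C) - \cod C$ requires
\[
i_C^* h^k(\cF)|_C \in \cgl C{w + k + r(C) - \cod C}.
\]
The target index exceeds the one just obtained by $v - r(C_0) + k$, so the bound from Corollary~\ref{cor:ic-bound} already suffices for $k \ge r(C_0) - v$. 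For $k < r(C_0) - v$, the required bound is vacuous because $\cF \in \Dg X{r(C_0)-v}$: the input $\cL[v-r(C_0)]$ is concentrated in standard degree $r(C_0)-v$ on $C_0$, and the intermediate-extension functor $\ru j_{!*}$ introduces no cohomology below this degree. This gives $\cF \in \Dkl Xw$.

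For the second step, I exploit that $\D\cF$ is again a staggered IC sheaf, now with respect to $\dualr = \bar r$: as in the proof of Lemma~\ref{lem:cod1-pure}, $\D\cF \cong \bru\cIC(\overline C_0, \cL'[v'-\dualr(C_0)])$, where $\cL' = \cHom(\cL, Ri_{C_0}^!\omega_X[\cod C_0]|_{C_0})$ is $s$-pure of step $v' = \alt C_0 - v$. Applying the preceding paragraph to $\D\cF$ with $(r,v,w)$ replaced by $(\dualr, v', w')$, where $w' = 2v' - 2\dualr(C_0) + \cod C_0$, gives $\D\cF \in \lsdr\Dkl X{w'}$, where one checks that the skew perversity attached to $\dualr$ coincides with $\skewdr = (\skewr)\breve{\;}$. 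A direct computation using $\scod C_0 = \alt C_0 + \cod C_0$ yields $w' = -w$, which by the definition of $\Dpkg$ is exactly $\cF \in \Dkg Xw$.

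The principal obstacle is verifying $\cF \in \Dg X{r(C_0)-v}$. Although morally a standard property of intermediate extensions, this does not appear to be recorded explicitly in the paper at this level of generality, so a short separate argument is needed. A natural route is noetherian induction on $\overline C_0 \smallsetminus C_0$, using the defining IC properties $Li_{C'}^*\cF \in \ru\Dml{\overline{C'}}{-1}$ and $Ri_{C'}^!\cF \in \ru\Dpg{\overline{C'}}{1}$ for $C' \subsetneq C_0$, together with the bound $h^k(\cF) \in {}_{2r}\cgl X{-2k}$ coming from $\cF \in \ru\cM(X)$, to preclude any nonzero cohomology in degrees below $r(C_0)-v$.
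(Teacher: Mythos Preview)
Your argument is correct and matches the paper's proof essentially line for line: the same baric perversity $q$, the same invocation of Corollary~\ref{cor:ic-bound}, the same comparison of indices, and the same duality step with $w' = -w$. The one place you overcomplicate matters is the vanishing $h^k(\cF) = 0$ for $k < r(C_0)-v$, which you flag as the ``principal obstacle'' and propose to prove by noetherian induction; in fact the paper simply cites the proof of Lemma~\ref{lem:cod1-pure}, where the short direct argument (that $\Tlt{r(C_0)-v}\cF$ lies in $\flatru\Dl{\overline C_0}{0}$, is supported off $C_0$, hence lies in $\ru\Dl{\overline C_0}{-1}$, so the canonical map to $\cF \in \ru\cM(\overline C_0)$ vanishes) works verbatim on $\overline C_0$.
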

\begin{proof}
Let $j: C_0 \hto \overline C_0$ be the inclusion, and let $\cF = \ru
j_{!*}(\cL[v-r(C_0)]$.  Of course, $\ru\cIC(\overline C_0,
\cL[v-r(C_0)]) \cong i_{C_0*}\cF$, so it suffices to show that $\cF$ is
skew-pure of skew degree $w$.

We saw in the proof of Lemma 5.1 that $h^k(\cF) = 0$ for $k < r(C_0) - v$.
Next, let $u = 2v - \alt C_0$, and consider the function $q: \orb {\overline C_0} \to \Z$ given by
\[
q(C) = 2\skewr(C) + 2w + 2(r(C_0) -v)  - u.
\]
Direct calculation shows that $q(C)$ satisfies the condition of
Corollary~\ref{cor:ic-bound}.  That statement tells us that $\cF \in
\q\Dsl {\overline C_0}{u}$, or, equivalently, that
\[
\cF \in \tsr\Dsl
{\overline C_0}{2w + 2(r(C_0)-v)}.
\]
In other words, for all $k \ge r(C_0) - v$, we have
\[
h^k(\cF) \in \tsr\cgl {\overline C_0}{2w+2(r(C_0)-v)} \subset \tsr\cgl
{\overline C_0}{2w + 2k}.
\]
Thus, $\cF \in \lsr\Dkl {\overline C_0}w$.  The same argument shows that
$\D\cF \cong \bru j_{!*}(\D(\cL[v- r(C_0)]))$ belongs to $\lsdr\Dkl
{\overline C_0}{w'}$, where $w' = 2(\alt C_0 - v) - 2\dualr(C_0) + \cod
C_0 = -w$.   Thus, $\cF \in \Dkp {\overline C_0}w$, as desired.
\end{proof}

\begin{thm}[Skew Purity]\label{thm:purity2}
Suppose $X$ is endowed with a recessed, split $s$-structure.
Let $r: \orb X \to \Z$ be a staggered perversity.
\begin{enumerate}
\item Let $\cF$ be a staggered
sheaf.  If $\cF \in \Dkl Xw$, then every
subquotient of $\cF$ is in $\Dkl Xw$.  If $\cF \in \Dkg Xw$, then every
subquotient of $\cF$ is in $\Dkg Xw$.
\item  Every simple staggered sheaf
is skew-pure.
\item  Every staggered sheaf
$\cF$ admits a unique finite filtration
\[
\cdots \subset \cF_{\sqsubseteq w-1} \subset \cF_{\sqsubseteq w} \subset \cF_{\sqsubseteq w+1}
\subset \cdots
\]
such that $\cF_{\sqsubseteq w}/\cF_{\sqsubseteq w-1}$ is skew-pure of skew degree $w$.
\item Let $\cF \in \Db X$.  Then
$\cF \in \Dkl Xw$ if and only if $\ru
h^i(\cF) \in \Dkl X{w+i}$ for all $i$, and $\cF \in \Dkg Xw$ if and only if
$\ru h^i(\cF) \in \Dkg X{w+i}$ for all $i$.
\end{enumerate}
\end{thm}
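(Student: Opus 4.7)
The proof follows the pattern of Theorem~\ref{thm:purity}, with the main new technical feature being that the co-$t$-structure on $\Db X$ does not provide functorial truncation functors, so Proposition~\ref{prop:bary-trunc} has no direct analogue.  The central technical step will instead be the following \emph{Key Lemma}: for every $\cF \in \ru\cM(X)$ and every $w \in \Z$, there is a subobject $\cF' \subset \cF$ in $\ru\cM(X)$ with $\cF' \in \Dkl Xw$ and $\cF/\cF' \in \Dkg X{w+1}$.  I would prove the Key Lemma by induction on the length of $\cF$.  The base case, $\cF$ simple, is exactly Lemma~\ref{lem:ic-pure2}: $\cF$ is skew-pure of some skew degree $v$, and either $v \le w$ (take $\cF' = \cF$) or $v > w$ (take $\cF' = 0$).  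For the inductive step, pick a simple subobject $\cF_0 \subset \cF$ with quotient $\cF_1 = \cF/\cF_0$, let $\cF_0' \subset \cF_0$ and $\cF_1' \subset \cF_1$ be the subobjects furnished by the induction hypothesis, and let $\tilde\cF_1' \subset \cF$ be the preimage of $\cF_1'$.  Then $\tilde\cF_1'/\cF_0'$ fits into a short exact sequence $0 \to \cF_0/\cF_0' \to \tilde\cF_1'/\cF_0' \to \cF_1' \to 0$, whose extension class lies in $\Ext^1(\cF_1', \cF_0/\cF_0') = \Hom(\cF_1', (\cF_0/\cF_0')[1])$ and vanishes by Proposition~\ref{prop:skew-orth} together with the shift property of Lemma~\ref{lem:skew-easy}.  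A choice of splitting produces the required subobject $\cF' \subset \cF$, and the membership conditions follow from the extension closure in Lemma~\ref{lem:skew-easy}.

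Given the Key Lemma, part~(2) is just Lemma~\ref{lem:ic-pure2}.  For part~(3), set $\cF_{\sqsubseteq w}$ equal to the subobject supplied by the Key Lemma at level $w$.  These subobjects are automatically nested: the composition $\cF_{\sqsubseteq w-1} \hto \cF \twoheadrightarrow \cF/\cF_{\sqsubseteq w}$ lies in $\Hom(\Dkl X{w-1}, \Dkg X{w+1})$, which vanishes by Proposition~\ref{prop:skew-orth}.  The subquotient $\cF_{\sqsubseteq w}/\cF_{\sqsubseteq w-1}$ is simultaneously a quotient of $\cF_{\sqsubseteq w} \in \Dkl Xw$ and a subobject of $\cF/\cF_{\sqsubseteq w-1} \in \Dkg Xw$, so once part~(1) is proved it lies in $\Dkp Xw$.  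Uniqueness follows from another orthogonality argument: if $\cG_1, \cG_2 \subset \cF$ both satisfy the skew-threshold conditions, then $(\cG_1+\cG_2)/\cG_1 \cong \cG_2/(\cG_1 \cap \cG_2)$ is an object of $\Dkl Xw \cap \Dkg X{w+1}$ (using part~(1) again), hence zero.  For part~(1), given $0 \to \cF_1 \to \cF \to \cF_2 \to 0$ in $\ru\cM(X)$ with $\cF \in \Dkl Xw$, apply the Key Lemma to $\cF_2$ to obtain $\cF_2' \subset \cF_2$ with $\cF_2/\cF_2' \in \Dkg X{w+1}$.  The composition $\cF \twoheadrightarrow \cF_2 \twoheadrightarrow \cF_2/\cF_2'$ vanishes by Proposition~\ref{prop:skew-orth}, so $\cF_2/\cF_2' = 0$ and hence $\cF_2 \in \Dkl Xw$; then $\cF_1 \in \Dkl Xw$ follows from the rotated distinguished triangle $\cF_2[-1] \to \cF_1 \to \cF$ and extension closure, using $\cF_2[-1] \in \Dkl X{w-1}$ from Lemma~\ref{lem:skew-easy}.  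The dual argument, applying the Key Lemma to $\cF_1$, handles the $\Dkg Xw$ case.

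Finally, for part~(4), the ``if'' direction follows by writing $\cF$ as an iterated extension of $\ru h^i(\cF)[-i]$ via the staggered truncations, and noting that $\ru h^i(\cF) \in \Dkl X{w+i}$ is equivalent to $\ru h^i(\cF)[-i] \in \Dkl Xw$ by Lemma~\ref{lem:skew-easy}.  For the ``only if'' direction I would induct on the number of nonzero staggered cohomology sheaves.  With only one, there is nothing to prove.  Otherwise, let $k$ be maximal with $\ru h^k(\cF) \ne 0$, and apply the Key Lemma to $\ru h^k(\cF)$ at skew level $w+k$ to obtain $\cH' \subset \ru h^k(\cF)$ with quotient $\cH''$.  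Applying $\Hom(-, \cH''[-k])$ to the distinguished triangle $\ru\Tl{k-1}\cF \to \cF \to \ru h^k(\cF)[-k] \to$ yields
\[
\Hom(\ru\Tl{k-1}\cF[1], \cH''[-k]) \to \Hom(\ru h^k(\cF), \cH'') \to \Hom(\cF, \cH''[-k]),
\]
whose outer terms both vanish: the left by the staggered $t$-structure orthogonality (since $\cH''[-k-1] \in \ru\Dg X{k+1}$ while $\ru\Tl{k-1}\cF \in \ru\Dl X{k-1}$), and the right by Proposition~\ref{prop:skew-orth} (since $\cH''[-k] \in \Dkg X{w+1}$ while $\cF \in \Dkl Xw$).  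Hence the surjection $\ru h^k(\cF) \twoheadrightarrow \cH''$ is zero, so $\cH'' = 0$ and $\ru h^k(\cF) \in \Dkl X{w+k}$; the rotated triangle then places $\ru\Tl{k-1}\cF$ in $\Dkl Xw$, and the induction hypothesis applies.  The principal obstacle is the Key Lemma itself, where the absence of a functorial co-$t$-structure truncation forces the hands-on extension-lifting argument through a non-canonical splitting.
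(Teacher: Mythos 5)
Your proof is correct, and it uses the same essential ingredients as the paper's (Lemma~\ref{lem:ic-pure2} for simples, Proposition~\ref{prop:skew-orth} for orthogonality, induction on length), but it is organized around a different pivot. The paper proves part~(1) directly by peeling off simple quotients, and for part~(3) it simply invokes~\cite[Lemme~5.3.6]{bbd}, applied to the two orthogonal families of simple staggered sheaves of skew degree $\le w$ and $>w$; your ``Key Lemma'' is precisely the conclusion of that BBD lemma, and your induction-on-length argument with the explicit extension-splitting (the class in $\Ext^1(\cF_1',\cF_0/\cF_0') \cong \Hom(\cF_1',(\cF_0/\cF_0')[1])$ vanishing by Proposition~\ref{prop:skew-orth}) is a correct self-contained proof of it. Having the Key Lemma up front lets you derive parts~(1), (3), and~(4) uniformly from it, whereas the paper's part~(4) instead inducts on total length using a single simple quotient of $\ru h^k(\cF)$ at each step; both routes go through. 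What your approach buys is self-containedness and a cleaner logical architecture (one existence statement feeding everything else); what the paper's buys is brevity, since the hard combinatorial content of the filtration is outsourced to BBD. One small point worth making explicit if you write this up: in the inductive step of the Key Lemma, after splitting $\tilde\cF_1'/\cF_0'$, the subobject $\cF'$ you extract is an extension of $\cF_1'$ by $\cF_0'$ and the quotient $\cF/\cF'$ is an extension of $\cF_1/\cF_1'$ by $\cF_0/\cF_0'$, so both memberships do follow from the extension-closure in Lemma~\ref{lem:skew-easy}(1), as you indicate.
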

\begin{proof}
(1)~We will prove the statement for $\Dkl Xw$; the statement for $\Dkg Xw$
then follows by duality.  Note that any subquotient of $\cF$ arises by
extensions among the composition factors of $\cF$, so it suffices to prove
that every composition factor of $\cF$ is in $\Dkl Xw$.  If $\cF$
is simple, then it is skew-pure by Lemma~\ref{lem:ic-pure2}, and there is
nothing to prove.  Otherwise, let $\cF_1$ be a simple quotient of $\cF$. 
Since $\Hom(\cF,\cF_1) \ne 0$, we know by Proposition~\ref{prop:skew-orth}
that $\cF_1 \notin \Dkg X{w+1}$.  Since $\cF_1$ is skew-pure, it must lie in
$\Dkl Xw$.  Therefore, $\cF_1[-1] \in \Dkl Xw$ as well.  Let $\cF_2 \subset
\cF$ be the kernel of the morphism $\cF \to \cF_1$.  From the
distinguished triangle $\cF_1[-1] \to \cF_2 \to \cF \to $, we see that
$\cF_2 \in \Dkl Xw$.  Since $\cF_2$ has shorter length than $\cF$, we know
that all it composition factors lie in $\Dkl Xw$.  Thus, all composition
factors of $\cF$ lie in $\Dkl Xw$, as desired.

(2)~This was proved in Lemma~\ref{lem:ic-pure2}.

(3)~We follow the proof of~\cite[Th\'eor\`eme~5.3.5]{bbd}.  Given an
integer $w$, let $\cS^+$ (resp.~$\cS^-$) denote the set of isomorphism
classes of simple staggered sheaves of skew-degree $> w$ (resp.~$\le w$). 
Clearly, if $\cG \in \cS^-$ and $\cG' \in \cS^+$, then $\cG'[1] \in \Dkg
X{w+1}$ as well, so $\Hom(\cG, \cG'[1]) = 0$ by
Proposition~\ref{prop:skew-orth}.  The sets $\cS^+$ and $\cS^-$ thus
satisfy the hypotheses of~\cite[Lemme~5.3.6]{bbd}, which then tells us
that every staggered sheaf $\cF$ admits a unique subobject $\cF_{\sqsubseteq w}$
belonging to $\Dkl Xw$ such that the quotient $\cF/\cF_{\sqsubseteq w}$ belongs to
$\Dkg X{w+1}$.  The functoriality of this assignment guarantees that
$\cF_{\sqsubseteq w-1} \subset \cF_{\sqsubseteq w}$ (so that we do indeed obtain a
filtration) and that $\cF_{\sqsubseteq w}/\cF_{\sqsubseteq w-1}$ is skew-pure of skew
degree $w$.  Finally, the uniqueness of this filtration follows from
part~(1).

(4)~Again, we will prove only the statement about $\Dkl Xw$.  First,
suppose $\ru h^i(\cF) \in \Dkl X{w+i}$ for all $i$.  Then $\ru
h^i(\cF)[-i] \in \Dkl Xw$.  Using the fact that $\Dkl Xw$ is stable under
extensions, it follows by induction on the number of nonzero $\ru
h^i(\cF)$ that $\cF \in \Dkl Xw$ as well.  Conversely, suppose $\cF \in
\Dkl Xw$.  By a minor abuse of terminology, we define the
\emph{total length} of $\cF$ to be the sum of lengths of all $\ru
h^i(\cF)$.  We proceed by induction on total length.  Let $k$ be the
largest integer such that $\ru h^k(\cF) \ne 0$, and let $\cF_1$ be a
simple quotient of $\ru h^k(\cF)$.  Note that $\ru\Tg k\cF \cong \ru
h^k(\cF)[-k]$.  From the adjunction $\Hom(\cF,\cF_1[-k]) \cong
\Hom(\ru\Tg k\cF, \cF_1[-k])$, we see that there is a natural nonzero
morphism $\cF \to \cF_1[-k]$.  $\cF_1[-k]$ is skew-pure and not in $\Dkg
X{w+1}$, by Proposition~\ref{prop:skew-orth}, so $\cF_1[-k] \in \Dkl Xw$,
or $\cF_1 \in \Dkl X{w+k}$.  Next, let $\cF_2$ be the cocone of the
morphism $\cF \to \cF_1[-k]$.  From the distinguished triangle
\[
\cF_1[-k-1] \to \cF_2 \to \cF \to
\]
and the fact that $\cF_1[-k-1] \in \Dkl Xw$, we see that $\cF_2 \in \Dkl
Xw$ as well.  It has shorter total length, so by assumption, $\ru
h^i(\cF_2) \in \Dkl X{w+i}$ for all $i$.  Now, consider the cohomology long
exact sequence associated to the distinguished triangle above.  We see
that $\ru h^i(\cF) \cong \ru h^i(\cF_2)$ for $i < k$, whereas for $i =
k$, we have a short exact sequence $0 \to \ru h^k(\cF_2) \to \ru h^k(\cF)
\to \cF_1 \to 0$.  It follows that $\ru h^i(\cF) \in \Dkl X{w+i}$ for all
$i$, as desired.
\end{proof}

%%%%%%%%%%%%%%%%%%%%%%%%%%%%%%%%%%%%%%%%%%%%%%%%%%%%%%%%%%%%%%%%%%%%%%%%%%%
\section{The Decomposition Theorems}
\label{sect:decomp}
%%%%%%%%%%%%%%%%%%%%%%%%%%%%%%%%%%%%%%%%%%%%%%%%%%%%%%%%%%%%%%%%%%%%%%%%%%%

In this section, we prove the two versions of the Decomposition Theorem. 
In contrast with the two Purity Theorems, whose proofs involved different
arguments, the two Decomposition Theorems have essentially identical
proofs, and we will prove them simultaneously.

We retain the assumption that $X$ is endowed with a recessed, split
$s$-structure.  Let $r: \orb X \to \Z$ be a fixed staggered perversity. 

\begin{prop}\label{prop:degrees}
Let $\cF \in \ru\cM(X)$.  The following conditions are equivalent:
\begin{enumerate}
\item $\cF$ is simple and skew-pure of skew degree $w$.

\item $\cF \cong \cIC(\overline C, \cL[(w - \cod C)/2])$, where $\cL \in
\cg C$ is an irreducible vector bundle that is $s$-pure of step $(w - \cod
C)/2 + r(C)$
\end{enumerate}
If furthermore $r$ is moderate, these conditions are equivalent to
\begin{enumerate}
\item[(3)] $\cF$ is simple and pure of baric degree $w + r(c) - \dualr(C)$.
\end{enumerate}

In particular, in the case where $r(C) = \half\scod C$, the baric and skew
degrees of a simple staggered sheaf coincide. 
\end{prop}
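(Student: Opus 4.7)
The plan is to invoke the classification of simple staggered sheaves together with the baric and skew purity calculations for intersection cohomology complexes. Recall from Section~\ref{sect:stagt} that every simple object of $\ru\cM(X)$ has the form $\ru\cIC(\overline C, \cL[v-r(C)])$ for some orbit $C \in \orb X$ and some irreducible vector bundle $\cL \in \cg C$ that is $s$-pure of some step $v$; conversely, every such expression yields a simple staggered sheaf. This classification lets us freely translate between ``simple'' and the datum $(C,\cL,v)$.

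For the equivalence (1) $\iff$ (2), one direction is immediate from Lemma~\ref{lem:ic-pure2}: if $\cF \cong \ru\cIC(\overline C, \cL[v-r(C)])$ with $\cL$ $s$-pure of step $v$, then $\cF$ is skew-pure of skew degree $2v - 2r(C) + \cod C$. Setting this equal to $w$ and solving yields $v - r(C) = (w-\cod C)/2$ and $v = (w-\cod C)/2 + r(C)$, which is exactly the shift and step appearing in~(2). Conversely, if $\cF$ is simple and skew-pure of skew degree $w$, write $\cF$ as an intersection cohomology complex of some $(C,\cL,v)$; Lemma~\ref{lem:ic-pure2} forces $2v - 2r(C) + \cod C = w$, giving~(2).

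For the equivalence with (3) under the moderate hypothesis, we apply Lemma~\ref{lem:ic-pure}, which tells us that $\ru\cIC(\overline C, \cL[v-r(C)])$ is pure of baric degree $2v - \alt C$. Substituting $v = (w-\cod C)/2 + r(C)$ from condition~(2), this baric degree equals $w - \cod C + 2r(C) - \alt C$. A direct computation using $\dualr(C) = \scod C - r(C) = \alt C + \cod C - r(C)$ shows
\[
w + r(C) - \dualr(C) = w + 2r(C) - \alt C - \cod C,
\]
which matches. Thus (2) implies~(3), and the converse follows in the same way using the fact that a simple staggered sheaf supported on $\overline C$ has a unique associated datum $(C,\cL,v)$, so the baric degree $2v - \alt C$ determined by Lemma~\ref{lem:ic-pure} determines $v$ and hence $w$ via the formula above.

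The final assertion, that the baric and skew degrees coincide when $r(C) = \half\scod C$, is then just the observation that in that case $r(C) = \dualr(C)$, so $w + r(C) - \dualr(C) = w$. The main obstacle is bookkeeping: keeping the various ``$-\cod C$'', ``$-\alt C$'', and ``$-\scod C$'' shifts straight and confirming that the algebra of the substitution lines up, but no new conceptual input beyond Lemmas~\ref{lem:ic-pure} and~\ref{lem:ic-pure2} and the classification of simples is required.
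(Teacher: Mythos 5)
Your proposal is correct and follows essentially the same route as the paper: both reduce to the classification of simples as $\ru\cIC(\overline C,\cL[v-r(C)])$, read off the skew degree $2v-2r(C)+\cod C$ from Lemma~\ref{lem:ic-pure2} to get (1)$\iff$(2), and read off the baric degree $2v-\alt C$ from Lemma~\ref{lem:ic-pure} to get (3), with the same substitution $\dualr(C)=\scod C-r(C)$ closing the computation.
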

\begin{proof}
We know that every simple staggered sheaf is of the form
$\ru\cIC(\overline C, \cL[v - r(C)])$ for some irreducible vector bundle
$\cL$.  From Lemma~\ref{lem:ic-pure2}, we have $v - r(C) = (w - \cod
C)/2$, and this establishes the equivalence of parts~(1) and~(2).  Next, in case $r$ is moderate,
Lemma~\ref{lem:ic-pure} tells us that the baric degree of $\cF$ is
\[
2v - \alt C = w - \cod C + 2r(C) - \alt C = w + r(C) + (r(C) - \scod C)
= w + r(C) - \dualr(C).
\]
This establishes the equivalence of part~(3) with the other two.
\end{proof}

Note that in the special case of the self-dual staggered perversity $r(C) =
\half\scod C$, the baric degree and skew degree of a simple staggered
sheaf coincide.

\begin{prop}\label{prop:pure-ext}
Let $\cF$ and $\cG$ be staggered sheaves.
\begin{enumerate}
\item If $\cF$ is skew-pure of skew degree $w$ and $\cG$ is skew-pure of
skew degree $v$, then $\Hom(\cF, \cG[k]) = 0$ for all $k > w - v$.
\item Assume that $r(C) = \half\scod C$ and $r$ is moderate.  If $\cF$ is pure of baric
degree $w$ and $\cG$ is pure of
baric degree $v$, then $\Hom(\cF, \cG[k]) = 0$ for all $k > w - v$.
\end{enumerate}
\end{prop}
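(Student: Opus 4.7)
The plan is in two steps: part (1) follows immediately from the co-$t$-structure formalism already in place, and part (2) is obtained by first showing that, under the self-dual hypothesis, baric purity implies skew purity, and then invoking part (1).

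For part (1), I would argue directly from the definitions. Since $\cG$ is skew-pure of skew degree $v$, we have $\cG \in \Dkl Xv \cap \Dkg Xv$. Iterating Lemma~\ref{lem:skew-easy}(3) yields $\cG[k] \in \Dkl X{v+k} \cap \Dkg X{v+k}$, so when $k > w-v$, i.e.\ $v+k \ge w+1$, we have $\cG[k] \in \Dkg X{w+1}$. Combined with $\cF \in \Dkl Xw$, Proposition~\ref{prop:skew-orth} delivers $\Hom(\cF, \cG[k]) = 0$.

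For part (2), the hypothesis $r(C) = \half\scod C$ is moderate (by the Remark following the definition of moderate), so Proposition~\ref{prop:degrees}(3) applies and says that for a \emph{simple} staggered sheaf, being pure of baric degree $w$ is equivalent to being skew-pure of skew degree $w$. I would upgrade this to arbitrary pure staggered sheaves as follows: Theorem~\ref{thm:purity}(1) shows that the subcategory of staggered sheaves pure of baric degree $w$ is closed under subquotients, so every composition factor of $\cF$ is a simple staggered sheaf pure of baric degree $w$, and is therefore skew-pure of skew degree $w$. Since $\Dkl Xw$ and $\Dkg Xw$ are closed under extensions by Lemma~\ref{lem:skew-easy}(1), so is $\Dkp Xw$, and induction on the length of $\cF$ shows $\cF \in \Dkp Xw$. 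The same reasoning applies to $\cG$, and part (1) completes the argument.

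No step looks like a serious obstacle; the argument is essentially bookkeeping on top of the machinery already developed. The only point to be careful about is verifying that the self-dual $r(C) = \half\scod C$ is indeed moderate, which justifies invoking Proposition~\ref{prop:degrees}(3); this rests on the standing assumption (implicit in writing $\half\scod C$) that $\scod C$ is even for every orbit.
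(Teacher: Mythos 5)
Your proposal is correct and follows the same route as the paper: part (1) is exactly the paper's argument (shift $\cG$ into $\Dkg X{w+1}$ and apply Proposition~\ref{prop:skew-orth}), and part (2) is the paper's one-line reduction via Proposition~\ref{prop:degrees}, which you have simply fleshed out by passing through composition factors using Theorem~\ref{thm:purity}(1) and the extension-closure of the skew categories. The only cosmetic point is that moderateness of $r$ is a hypothesis of the statement, so it need not be re-derived from the Remark.
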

\begin{proof}
Part~(1) is an immediate consequence of Proposition~\ref{prop:skew-orth}
and the fact that $\cG[k] \in \Dkg X{v+k}$, and part~(2) then follows using
Proposition~\ref{prop:degrees}.
\end{proof}

\begin{prop}[{{\it cf.}~\cite[Th\'eor\`eme~5.3.8]{bbd}}]
\ 
\begin{enumerate}
\item Every skew-pure staggered sheaf is semisimple.
\item Assume $r(C) = \half\scod C$ and $r$ is moderate.  Then every pure staggered sheaf is
semisimple.
\end{enumerate}
\end{prop}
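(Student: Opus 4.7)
The plan is to adapt the standard argument of BBD that pure perverse sheaves are semisimple, using Proposition~\ref{prop:pure-ext} as the key homological input and proceeding by induction on length in the finite-length category $\ru\cM(X)$. Both parts of the proposition will follow from essentially the same argument, differing only in which purity is invoked.

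For part~(1), suppose $\cF$ is skew-pure of skew degree $w$. If $\cF$ is simple there is nothing to prove, so assume it has length at least two and pick a simple subobject $\cF' \subset \cF$, which exists by finite length. Form the short exact sequence
\[
0 \to \cF' \to \cF \to \cF'' \to 0
\]
in $\ru\cM(X)$. By Theorem~\ref{thm:purity2}(1), both $\cF'$ and $\cF''$ lie in $\Dkl Xw \cap \Dkg Xw$, hence are skew-pure of skew degree $w$. Proposition~\ref{prop:pure-ext}(1) applied with $v = w$ then gives
\[
\Ext^1(\cF'', \cF') = \Hom(\cF'', \cF'[1]) = 0,
\]
so the sequence splits. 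Since $\cF''$ has strictly smaller length than $\cF$ and remains skew-pure of skew degree $w$, induction yields that $\cF''$ is semisimple, and consequently $\cF \cong \cF' \oplus \cF''$ is semisimple.

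For part~(2), I would run the identical argument, replacing Theorem~\ref{thm:purity2}(1) with Theorem~\ref{thm:purity}(1) to propagate purity to $\cF'$ and $\cF''$, and replacing Proposition~\ref{prop:pure-ext}(1) with Proposition~\ref{prop:pure-ext}(2) to kill $\Ext^1(\cF'',\cF')$. The hypotheses $r(C) = \half\scod C$ and moderateness of $r$ enter only at this last step, as they are exactly the conditions under which baric degree of a simple staggered sheaf is well defined and controls $\Ext^1$.

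There is essentially no obstacle to this plan: everything substantive has already been established. The only mildly delicate structural point to verify is that picking a simple subobject of $\cF$ does not break the inductive setup, but this is immediate since preservation of (skew-)purity under subquotients guarantees that $\cF'$ and $\cF''$ have the same (skew-)degree as $\cF$, so the inductive hypothesis applies to $\cF''$ without change.
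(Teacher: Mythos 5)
Your proof is correct and follows essentially the same route as the paper: both arguments propagate (skew-)purity to the subobject and quotient via the relevant Purity Theorem and then kill $\Hom(\cF'',\cF'[1])$ with Proposition~\ref{prop:pure-ext} to split the extension. The only cosmetic difference is that the paper takes $\cF'$ to be the sum of all simple subobjects and concludes by contradiction, whereas you take a single simple subobject and induct on length; both are valid since $\ru\cM(X)$ has finite length.
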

\begin{proof}
The proofs of the two parts are identical, and we prove them
simultaneously. Let $\cF$ be a (skew-)pure staggered sheaf, and let $\cF'
\subset \cF$ be the sum
of all simple subobjects of $\cF$.  $\cF'$ is the largest semisimple
subobject of $\cF$.  We must show that $\cF' = \cF$.  Form a short exact
sequence
\[
0 \to \cF' \to \cF \to \cF'' \to 0.
\]
By Theorem~\ref{thm:purity} or~\ref{thm:purity2}, $\cF'$ and $\cF''$ are
also (skew-)pure of degree $w$, and then by
Proposition~\ref{prop:pure-ext}, $\Hom(\cF'', \cF'[1])
= 0$.  It follows that this short exact sequence splits, and that $\cF
\cong \cF' \oplus \cF''$.  If $\cF'' \ne 0$, then any simple subobject of
$\cF''$ would also be a simple subobject of $\cF$ not contained in $\cF'$,
a contradiction.
\end{proof}

\begin{prop}[{{\it cf.}~\cite[Th\'eor\`eme~5.4.5]{bbd}}]
Let $\cF \in \Db X$.
\begin{enumerate}
\item If $\cF$ is skew-pure, then $\cF \cong \bigoplus_{i \in \Z}
\ru
h^i(\cF)[-i]$.
\item Assume $r(C) = \half\scod C$ and that $r$ is moderate.  If $\cF$ is pure, then
$\cF \cong \bigoplus_{i \in \Z} \ru
h^i(\cF)[-i]$.
\end{enumerate}
\end{prop}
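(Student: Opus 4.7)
The plan is to prove both parts simultaneously by induction on the number of nonzero staggered cohomology sheaves of $\cF$. The base case, when $\cF$ has at most one nonzero $\ru h^i(\cF)$, is immediate because $\cF \cong \ru h^i(\cF)[-i]$ for the unique such index. For the inductive step, let $k$ be the largest integer with $\ru h^k(\cF) \ne 0$. Since $\ru\Tg k\cF \cong \ru h^k(\cF)[-k]$, we have a distinguished triangle
\[
\ru\Tl{k-1}\cF \to \cF \to \ru h^k(\cF)[-k] \xrightarrow{\delta} \ru\Tl{k-1}\cF[1].
\]
If we can show $\delta = 0$, this triangle splits, giving $\cF \cong \ru\Tl{k-1}\cF \oplus \ru h^k(\cF)[-k]$. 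Since $\ru\Tl{k-1}\cF$ has strictly fewer nonzero cohomology sheaves than $\cF$, and it inherits (skew-)purity from $\cF$ by Theorem~\ref{thm:purity}(4) in the baric case or Theorem~\ref{thm:purity2}(4) in the skew case, the inductive hypothesis applied to $\ru\Tl{k-1}\cF$ gives the decomposition $\ru\Tl{k-1}\cF \cong \bigoplus_{i \le k-1} \ru h^i(\cF)[-i]$, which combined with the splitting yields the desired decomposition of $\cF$.

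The crux is therefore to show that $\delta = 0$. Using the inductive decomposition of $\ru\Tl{k-1}\cF$ (valid because it has fewer nonzero cohomology sheaves), the target $\Hom$-group becomes
\[
\Hom(\ru h^k(\cF)[-k], \ru\Tl{k-1}\cF[1]) \cong \bigoplus_{i \le k-1} \Hom(\ru h^k(\cF), \ru h^i(\cF)[k-i+1]).
\]
By Theorem~\ref{thm:purity}(4) or~\ref{thm:purity2}(4), each $\ru h^j(\cF)$ is itself a (skew-)pure staggered sheaf of the appropriate degree. In the skew case, $\ru h^j(\cF)$ is skew-pure of skew degree $w+j$, and Proposition~\ref{prop:pure-ext}(1) kills each summand because $k-i+1 > k-i = (w+k) - (w+i)$. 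In the baric case, under the hypotheses of part~(2), namely $r(C) = \half\scod C$ and $r$ moderate, each $\ru h^j(\cF)$ is pure of baric degree $w$, and Proposition~\ref{prop:pure-ext}(2) kills each summand because $k-i+1 > 0 = w-w$. Either way, $\delta = 0$ and the induction closes.

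The main technical ingredient is the $\Ext$-vanishing of Proposition~\ref{prop:pure-ext}, which itself rests on the co-$t$-structure orthogonality from Proposition~\ref{prop:skew-orth} (in the skew case) and on its translation via Proposition~\ref{prop:degrees} (in the baric case); the induction and the reduction to a single $\Hom$-vanishing are purely formal once the preceding machinery is in place. The only potential subtlety is verifying that the truncation $\ru\Tl{k-1}\cF$ is genuinely (skew-)pure of the same degree as $\cF$, but this is read off immediately from part~(4) of the relevant Purity Theorem applied to the fact that $\ru h^i(\ru\Tl{k-1}\cF) = \ru h^i(\cF)$ for $i \le k-1$ and vanishes otherwise.
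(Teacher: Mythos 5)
Your proof is correct and follows essentially the same route as the paper's: induction on the number of nonzero staggered cohomology objects, splitting the truncation triangle by showing the connecting map vanishes, with the vanishing supplied by part~(4) of the relevant Purity Theorem together with Proposition~\ref{prop:pure-ext}. The degree bookkeeping ($k-i+1 > k-i$ in the skew case, $k-i+1 > 0$ in the baric case) matches the paper exactly.
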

\begin{proof}
Again, we prove the two parts simultaneously.  We proceed by induction on
the number of nonzero staggered cohomology objects of
$\cF$.  If $\cF$ has zero or one nonzero cohomology objects, then there is
nothing to prove.  Otherwise, let $k$ be the largest integer such that $\ru
h^k(\cF) \ne 0$, and form the distinguished triangle
\[
\ru\Tl {k-1}\cF \to \cF \to \ru h^k(\cF)[-k] \to
\]
It follows from Theorem~\ref{thm:purity} or~\ref{thm:purity2} that the
staggered truncation functor $\ru\Tl {k-1}$ preserves (skew-)purity.  Since
$\ru\Tl {k-1}\cF$ has fewer nonzero cohomology objects than $\cF$, we have
$\ru\Tl {k-1}\cF \cong \bigoplus_{i \le k-1} \ru h^i(\cF)[-i]$ by
assumption.  Then
\begin{align*}
\Hom(\ru h^k(\cF)[-k], (\ru\Tl {k-1}\cF)[1])
&\cong\bigoplus_{i \le k-1} 
\Hom(\ru h^k(\cF)[-k], \ru h^i(\cF)[-i+1]) \\
&\cong\bigoplus_{i \le k-1} 
\Hom(\ru h^k(\cF), \ru h^i(\cF)[k+1-i]).
\end{align*}
We claim that $\Hom(\ru h^k(\cF), \ru h^i(\cF)[k+1-i]) = 0$ for all $i$. 
In the setting of skew-purity, Theorem~\ref{thm:purity2} tells us that
$\ru h^k(\cF)$ is skew-pure of skew degree $w+k$, and that each $\ru
h^i(\cF)$ is skew-pure of skew degree $w+i$.  In the setting of baric
purity, Theorem~\ref{thm:purity} tells us that $\ru h^k(\cF)$ and all the
$\ru h^i(\cF)$ are pure of baric degree $w$.  Since $k+1-i > (w+k) -
(w+i)$ and $k+1-i > 0$, Proposition~\ref{prop:pure-ext} tells us in both
cases that $\Hom(\ru h^k(\cF), \ru h^i(\cF)[k+1-i]) = 0$.  Thus, $\Hom(\ru
h^k(\cF)[-k],
(\ru\Tl {k-1}\cF)[1]) = 0$, so in the distinguished triangle above, we find
that
\[
\cF \cong \ru\Tl {k-1}\cF \oplus \ru h^k(\cF)[-k] \cong \bigoplus_{i \in
\Z} \ru h^i(\cF)[-i],
\]
as desired.
\end{proof}

Combining the preceding two propositions with the formulas in
Proposition~\ref{prop:degrees} relating step, baric degree, and skew
degree, we obtain the following theorem.

\begin{thm}[Decomposition]
\label{thm:decomposition}
Assume that $X$ is endowed with a recessed, split $s$-structure. 
\begin{enumerate}
\item Every skew-pure complex $\cF \in \Dkp Xw$ admits a decomposition
\[
\cF \cong \bigoplus_{i=1}^n \ru\cIC(\overline C_i, \cL_i[(w-k_i-\cod
C)/2])[k_i],
\]
where each $\cL_i \in \cg {C_i}$ is an irreducible vector bundle that is
$s$-pure of step $(w - k_i - \cod C)/2 + r(C_i)$.
\item Assume $r(C) = \half\scod C$ and that $r$ is moderate.  Every pure complex $\cF \in \Dsp
Xw$ admits a decomposition
\[
\cF \cong \bigoplus_{i=1}^n
\cIC(\overline C_i, \cL_i[(w -\cod C_i)/2])[k_i]
\]
where each $C_i$ is an orbit such that $w \equiv \cod C_i \pmod 2$, and
each $\cL_i \in \cg {C_i}$ is an irreducible vector bundle that is
$s$-pure of step $(w+\alt C_i)/2$.\qed
\end{enumerate}
\end{thm}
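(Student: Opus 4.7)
The plan is to chain together the three preceding propositions (the splitting of (skew-)pure complexes into shifts of their staggered cohomology, the semisimplicity of (skew-)pure staggered sheaves, and the numerical dictionary of Proposition~\ref{prop:degrees}) together with the relevant part~(4) of the Purity Theorems. Both parts of Theorem~\ref{thm:decomposition} will be proved in parallel, the only asymmetry being which notion of purity is tracked and which form of the Purity Theorem is invoked.

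First I apply the splitting proposition to obtain $\cF \cong \bigoplus_m \ru h^m(\cF)[-m]$. Next, I identify the purity class of each cohomology sheaf: for part~(1), Theorem~\ref{thm:purity2}(4) shows that $\ru h^m(\cF) \in \Dkl X{w+m} \cap \Dkg X{w+m}$, so $\ru h^m(\cF)$ is skew-pure of skew degree $w+m$; for part~(2), Theorem~\ref{thm:purity}(4) shows that $\ru h^m(\cF)$ is pure of baric degree exactly $w$, independent of $m$. The latter observation is the structural reason that the shift inside $\cIC$ in part~(2) carries no $k_i$-dependence, in contrast with part~(1). The semisimplicity proposition then decomposes each $\ru h^m(\cF)$ as a direct sum of simple (skew-)pure staggered sheaves.

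Finally, I translate these simple summands into $\cIC$ form using Proposition~\ref{prop:degrees}. In the skew setting, condition~(2) of that proposition gives each simple skew-pure object of skew degree $w+m$ in the form $\ru\cIC(\overline C, \cL[((w+m)-\cod C)/2])$ with $\cL$ irreducible and $s$-pure of step $((w+m)-\cod C)/2 + r(C)$; writing $k_i = -m$ across all summands recovers the displayed formula of part~(1). In the baric setting, the hypothesis $r(C) = \half\scod C$ makes $r = \dualr$, so condition~(3) of the same proposition identifies the simple pure summands as $\cIC(\overline C, \cL[(w-\cod C)/2])$ with $\cL$ irreducible and $s$-pure of step $(w-\cod C)/2 + r(C) = (w+\alt C)/2$. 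The parity condition $w \equiv \cod C \pmod 2$ is forced by integrality of the shift; it agrees with $w \equiv \alt C \pmod 2$ because $\scod C = \alt C + \cod C$ must be even for $r$ to be integer-valued. The only real obstacle throughout is the bookkeeping of shifts, steps, and parities; no additional homological input is required beyond the propositions already in hand.
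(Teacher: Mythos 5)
Your proposal is correct and follows essentially the same route as the paper, which derives the theorem in one line by combining the splitting proposition $\cF \cong \bigoplus \ru h^i(\cF)[-i]$, the semisimplicity of (skew-)pure staggered sheaves, and the degree dictionary of Proposition~\ref{prop:degrees}. Your explicit invocation of part~(4) of the Purity Theorems to pin down the (skew) degree of each $\ru h^m(\cF)$, and the bookkeeping $k_i = -m$, are exactly the implicit steps the paper leaves to the reader.
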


%%%%%%%%%%%%%%%%%%%%%%%%%%%%%%%%%%%%%%%%%%%%%%%%%%%%%%%%%%%%%%%%%%%%%%%%%%%
\section{An Example}
\label{sect:example}
%%%%%%%%%%%%%%%%%%%%%%%%%%%%%%%%%%%%%%%%%%%%%%%%%%%%%%%%%%%%%%%%%%%%%%%%%%%

We conclude with a brief example illustrating the skew decomposition
theorem.  Let $A =  \C[x,y,z]$, and let $X = \bbA^3(\C) = \Spec A$.  Let $G_1 = G_2 = G_3 = \bbG_m$, and let $G = G_1 \times G_2 \times G_3$.  (This notation will facilitate distinguishing between the various factors of $G$.) Let $G$ act on $X$ in the usual way: 
$(t_1,t_2,t_3) \cdot (a_1,a_2,a_3) \mapsto (t_1a_1, t_2a_2, t_3a_3)$
for $(t_1,t_2,t_3) \in G$ and $(a_1,a_2,a_3) \in \bbA^3$.

For each $i \in \{1,2,3\}$, let $X_i \cong \Z$ denote the character lattice of $G_i$.  For any subset $S \subset \{ 1,2,3\}$, let $G_S = \prod_{i \in S} G_i$.  Its character lattice is $X_S = \bigoplus_{i \in S} X_i$.  In this way, we regard each $X_S$ as a direct summand (rather than merely a quotient) of $X(G) = X_1 \oplus X_2 \oplus X_3$.  Now, let $\chi: X(G) \to \Z$ be the map $(\lambda_1, \lambda_2, \lambda_3) \mapsto \lambda_1+\lambda_2+\lambda_3$.  By restriction, $\chi$ gives rise to maps $\chi_S: X_S \to \Z$ for all $S \subset \{1,2,3\}$.  The $G$-stabilizer of any point is some $G_S$, so, following Section~\ref{sect:orbit} and the gluing theorem for $s$-structures~\cite[Theorem~1.1]{as:flag}, the collection $\{\chi_S\}$ defines an $s$-structure on $\bbA^3$.  Taking the dualizing complex $\omega_{\bbA^3}$ to be the structure sheaf, one may calculate that $\alt C = \cod C$ for every orbit $C$.

Throughout this example, we pass freely between the language of
$G$-equivariant coherent sheaves on $X$ and that of $A$-modules
with a compatible $G$-action.  For $\lambda \in X(G)$, let $A(\lambda)$ denote a rank-$1$ free $A$-module generated by an element
on which $G$ acts by $\lambda$. Let $\C(\lambda)$ denote the
$1$-dimensional $A$-module on which $x$, $y$, and $z$ act by $0$, and $G$
acts by $\lambda$.  More generally, for any coherent sheaf $\cF$, let
$\cF(\lambda)$ denote the sheaf $\cF \otimes A(\lambda)$.  
The object $\cH_\lambda = \C(\lambda)[\chi(\lambda) - 3]$ is a simple
staggered sheaf with respect to the middle perversity $r(C) = \half \scod C$.  Its baric and skew degrees are both $2\chi(\lambda) - 3$.

Consider the structure sheaves of the $x$- and $z$-axes:
\[
\cO_x = A/(y,z), \qquad
\cO_z = A/(x,y).
\]
We claim that $\cO_x \Lotimes \cO_z$ is skew-pure of skew degree $0$.  It is easy to see that $\cO_z \in \Dkl X0$.  Then, $\cO_x \Lotimes \cO_z
\cong i_*Li^*\cO_z$, where $i: \Spec \cO_x \hto X$ is the inclusion of the
$x$-axis as a reduced closed subscheme.  By Lemma~\ref{lem:spure-res}, we
know that $\cO_x \Lotimes \cO_z \in \Dkl X0$.  Now, consider the dual:
\[
\D(\cO_x \Lotimes \cO_z) \cong
\cRHom(\cO_x \Lotimes \cO_z, A) \cong
\cRHom(\cO_x, \cRHom(\cO_z,A)).
\]
Direct computation shows that $\cRHom(\cO_z,A) \cong
\cO_z(1,1,0)[-2]$.  To compute $\cRHom(\cO_x, \cO_z(1,1,0)[-2])$, we use
the following free resolution of $\cO_x$:
\[
yzA \to yA \oplus zA \to A
\qquad\text{or}\qquad
A(0,-1,-1) \to A(0,-1,0)\oplus A(0,0,-1) \to A.
\]
Then $\cRHom(\cO_x, \cO_z(1,1,0)[-2])$ is represented by the complex
\[
\cO_z(1,1,0) \to \cO_z(1,2,0) \oplus \cO_z(1,1,1) \to \cO_z(1,2,1),
\]
with nonzero terms in degrees $2$, $3$, and $4$.  The term in degree $k$ lies in $\cgl Xk$, so $\D(\cO_x \Lotimes \cO_z) \in \Dkl X0$, and $\cO_x \Lotimes \cO_z$ is skew-pure of skew degree $0$.

In fact, it turns out that
\[
\cO_x \Lotimes \cO_z \cong \C(0) \oplus \C(0,-1,0)[1]
\cong \cH_0[3] \oplus \cH_{(0,-1,0)}[5].
\]

%%%%%%%%%%%%%%%%%%%%%%%%%%%%%%%%%%%%%%%%%%%%%%%%%%%%%%%%%%%%%%%%%%%%%%%%%%%

\end{document}